\newtheorem{theorem}{Theorem}[section]
\newtheorem{proposition}[theorem]{Proposition}
\newtheorem{corollary}[theorem]{Corollary}
\newtheorem{lemma}[theorem]{Lemma}
\newtheorem{sub-lemma}[theorem]{Sub-Lemma}
\newtheorem{remark}[theorem]{Remark}
\newtheorem{example}[theorem]{Example}
\newcommand{\bom}{b_{\omega_0}}
\def\A{\mathcal{A}}
\def\L{\mathcal{L}}
\def\G{\mathcal{G}}
\def\L{\mathcal{L}}
\def\P{\mathcal{P}}
\def\R{\mathcal{R}}
\def\T{\mathcal{T}}
\def\NN{\mathbb{N}}
\def\PP{\mathbb{P}}
\def\RR{\mathbb{R}}
\DeclareMathOperator{\sign}{sign}
\DeclareMathOperator{\Lip}{Lip}
\let\eps=\varepsilon
\def\B{\mathcal{B}}
\def\RR{{\mathbb R}}
\def\1{{{\mathit 1} \!\!\>\!\! I} }
\newcommand{\amin}{{\underline \alpha}_0}
\newcommand{\amax}{{\underline \alpha}_1}
\begin{document}

\title{Linear response for random dynamical systems}
\author{Wael Bahsoun}
\author{Marks Ruziboev}
\address{Department of Mathematical Sciences, Loughborough University,
Loughborough, Leicestershire, LE11 3TU, UK}
\email{W.Bahsoun@lboro.ac.uk, M.Ruziboev@lboro.ac.uk}
\author{Beno\^\i t Saussol}
\address{Universit\'e de Brest, Laboratoire de
Math\'ematiques de Bretagne Atlantique, CNRS UMR 6205, Brest, France}
\email{benoit.saussol@univ-brest.fr}
\thanks{This work was conducted during visits of WB and MR  to Universit\'e de Bretagne Occidentale and of BS to Loughborough University. WB and BS would like to thank The Leverhulme Trust for supporting their research visits through the Network Grant IN-2014-021. WB and MR would like to thank The Leverhulme Trust for supporting their research through the research grant RPG-2015-346.}
\keywords{Linear response, Random dynamical systems, Gauss-R\'enyi, Intermittent maps.}
\subjclass{Primary 37A05, 37E05}
\begin{abstract}
We study for the first time linear response for random compositions of maps, chosen independently according to a distribution $\PP$. We are interested in the following question: how does an absolutely continuous stationary measure (acsm) of a random system change when $\PP$ changes smoothly to $\PP_{\eps}$? For a wide class of one dimensional random maps, we prove differentiability of acsm with respect to $\eps$; moreover, we obtain a linear response formula.  We apply our results to iid compositions, with respect to various distributions $\PP_{\eps}$, of uniformly expanding circle maps, Gauss-R\'enyi maps (random continued fractions) and Pomeau-Manneville maps. Our results yield an \emph{exact formula} for the invariant density of random continued fractions; while for Pomeau-Manneville maps our results provide a \emph{precise relation} between their linear response under certain random perturbations and their linear response under deterministic perturbations. 
\end{abstract}
\date{\today}
\maketitle
\markboth{Wael Bahsoun, Marks Ruziboev \and Beno\^\i t Saussol}{Linear response for random dynamical systems}
\bibliographystyle{plain}
\tableofcontents
\section{Introduction}
Existence and stability of absolutely continuous invariant measures\footnote{In this paper we focus on absolutely continuous invariant measures since they naturally fit with the systems we consider. In particular, for the systems we consider, absolutely continuous invariant measures are the so-called physical measures, the ones that provide information for a large set of initial conditions.} are main ingredients to study statistical properties of chaotic dynamical systems. In particular, a question that is interesting from both theoretical and applied point of views is how does an absolutely continuous invariant measure change, and consequently the statistical properties of the system, if the original system changes slightly? 

\bigskip

It is known that for certain perturbations of deterministic dynamical systems one can prove that the measure changes smoothly and obtain a formula for the derivative, called the \emph{linear response formula}. Linear response for deterministic dynamical systems has been pioneered\footnote{See \cite{KP} for earlier related work.} by Ruelle \cite{R} followed by Dolgopyat \cite{D} and Baladi \cite{Ba1} among others \cite{BS, BBS, BaS0, BaS, BT, BL, D, GL, K}. For numerical results on linear response see \cite{BGNN, PV}. Negative results, where linear response does not hold are also known \cite{Ba1, Ba2, BaS0}. For progress in this direction of research see the survey article \cite{Ba2} and the recent articles \cite{BKL, Ja}. 

\bigskip

However, to the best of our knowledge there are no results in the literature on linear response for random compositions of maps. Our goal in this paper is to pioneer this direction and to provide a new point of view for perturbations in the random setting. Indeed, in this work we study linear response for random compositions of maps, chosen independently according to a distribution $\PP$. We are interested in the following question: how does an absolutely continuous stationary measure (acsm) of a random system change when $\PP$ changes smoothly to $\PP_{\eps}$? For a wide class of one dimensional random maps, we prove differentiability of acsm with respect to $\eps$; moreover, we obtain a linear response formula.  We apply our results to iid compositions of uniformly expanding circle maps, to iid compositions of the Gauss-R\'enyi maps and to iid compositions of Pomeau-Manneville maps. The latter family models intermittent transition to turbulence and is of central interest for both mathematicians \cite{DT, FFTV, G, Hu, LS, LSV, Me, SvS, young99} and physicists \cite{PM}, while the former family provides fundamental links between ergodic theory and number theoretic questions \cite{DO, DdeV, KKV}. Indeed, for the Gauss-R\'enyi maps we use our results to approximate the invariant density governing the statistics of random continued fractions by the well known invariant density of the Gauss map, $\frac{1}{\log2}\frac{1}{1+x}$, and its linear response with respect to a Bernoulli distribution (see subsection \ref{appGauss} for more details; in particular \eqref{GR:revisted}). In the case of Pomeau-Manneville maps we show that the linear response with respect to a family of uniform distributions converging to a Dirac $\delta_{\alpha_0}$-distribution, $\alpha_0\in(0,1)$, amounts to \emph{half} of the linear response with respect to deterministic perturbations (see subsection \ref{PMdirac}).

\bigskip

The paper is organised as follows. In Section \ref{ss:hatT} we study iid compositions of piecewise uniformly expanding, piecewise $C^3$ and onto interval maps. Under suitable assumptions on $\PP_\eps$ we prove differentiability of the stationary density as an element of $C^1$. Our main result in this section is Theorem \ref{thm:LRuniform}. In Section \ref{nonuniRDS} we study random dynamical systems whose constituent maps are non-uniformly  expanding. For this purpose we introduce an inducing scheme and obtain an induced random dynamical system which satisfies the assumptions of Section \ref{ss:hatT}. We relate the stationary densities of the induced random system to the original one and prove differentiability of the stationary density of the original random system as an element of a weighted $C^0$-norm. Our main result in this section is Theorem \ref{thm:A}. In Section \ref{LRformula} we obtain linear response formulae for the systems studied in Sections \ref{ss:hatT} and \ref{nonuniRDS} when the map $\eps\mapsto\PP_\eps$ is a distribution of order one. Moreover, we provide several examples of natural families where $\eps\mapsto\PP_\eps$ is a distribution of order one. Section \ref{LSVmaps} contains several examples of families of maps and distributions that satisfy the conditions of Sections \ref{ss:hatT} and \ref{nonuniRDS} respectively. In particular, it contains examples that studies iid compositions of the Gauss-R\'enyi maps, an approximation of the invariant density of random Gauss-R\'enyi maps (see equation \eqref{GR:revisted}), random compositions of Pomeau-Manneville maps chosen in an iid fashion according to a family of smooth distributions $\PP_\eps$, and random compositions of Pomeau-Manneville maps chosen in an iid fashion according to a family of uniform distributions $\PP_\eps$ converging to a Dirac $\delta$-distribution. Section \ref{appendix} is an appendix which contains facts about distributions of order one, a proof of linear response for Markov operators with a uniform spectral gap, and a proof of a uniform spectral gap on $C^i$, $i=1,2$, for the transfer operators associated with the systems studied in Section \ref{ss:hatT}. 

\section{Piecewise uniformly expanding random dynamical systems}\label{ss:hatT}

In this section we introduce a class of (family of) random dynamical systems whose constituent maps are uniformly expanding,
with a finite or countable number of branches, for which we will be able to 
prove a linear response formula.

\subsection{A class of uniformly expanding maps}\label{class}

Let $X$ be a compact interval, and $m$ be the normalized Lebesgue measure on $X$. 
Let $(\Omega,\PP)$ be a probability space.
Let $T_\omega\colon X\to X$, $\omega\in\Omega$ be a family of maps such that for each $\omega\in\Omega$, there exists a finite or countable set $Z_\omega$ and a partition (mod 0) of $X$ into open intervals $X_{z,\omega}$, $z\in Z_\omega$ such that the restriction of $ T_\omega$ to $X_{z,\omega}$ is $C^3$ and onto. We denote by $g_{z,\omega}$ the inverse branches of $ T_\omega$ on $X_{z,\omega}$.
For convenience we take the same labelling set $Z$ for all the $\omega$'s. Since all the maps have finite or countable number of branches this is always possible by introducing empty branches $g_{z,\omega}$ whenever $z$ and $\omega$ are not compatible. In all the sums over $z$ that will appear we will not count these empty branches.


\subsection{Stationary measure of the Markov process}\label{markstat}
We study the random dynamical system defined by the i.i.d. composition of maps $T_\omega$,
with $\omega$ distributed according to $\PP$.
The random dynamical system induces a Markov process with
transition kernel 
\[
p(x,A) = \int_\Omega 1_A(T_\omega(x)) d\PP(\omega) .
\]
We say that a measure $\mu$ on $X$ is stationary if for any measurable $A\subset X$
\[
\int_X p(x,A)d\mu(x) = \mu(A),
\]
or equivalently, for any $\phi\colon X\to \RR$ measurable and bounded,
\[
\int_X\int_\Omega \phi\circ T_\omega(x) d\PP(\omega) d\mu(x) = \int \phi(x) d\mu(x).
\]
For $\phi \in L^{\infty}(X)$ and $\Phi\in L^1(X)$ we have
\[
\begin{aligned}
\int_X\int_\Omega\phi\circ T_\omega \Phi d\PP(\omega)dm
&=
\int_\Omega\int_X\phi\circ T_\omega \Phi dm d\PP(\omega)\\
&=
\int_\Omega \int_X \phi L_{T_\omega}\Phi dm d\PP(\omega)\\
&=
\int_X \phi \int_\Omega L_{T_\omega}\Phi d\PP(\omega)  dm, 
\end{aligned}
\]
where $L_{T_\omega}$ is the transfer operator associated with the map $T_\omega$, defined by
\[
L_{T_\omega}\Phi = \sum_{z\in Z} \Phi\circ g_{z,\omega}\cdot |g_{z,\omega}'|.
\]
We set
$$
L_{\PP}\Phi := \int_\Omega L_{T_\omega}\Phi d\PP(\omega). 
$$
In particular, any stationary measure $\mu$ absolutely continuous with respect to $m$, with density $h$, satisfies
\[
L_\PP h=h.
\]
$ L_\PP$ is called the transfer operator of this random dynamical system. 

\subsection{The perturbed random system}\label{pertunif}
Let $\PP_\epsilon$ be a family of probability measures on $\Omega$.
We are interested in studying the change in the statistical behaviour of the random system\footnote{
A common way to have a parameter dependent random system is also when the system consists of a fixed probability space $(\Omega,\PP)$ and a parametrized family of maps $T_{\omega,\eps}$, $\omega\in\Omega$, $\eps\in V$.

This situation can be represented in our framework, with the new probability space $\Omega\times V$ and the probability measure $\PP_\eps=\PP\otimes\delta_\eps$.} $(\Omega, \{T_\omega\}, \PP_\eps)$ as $\eps$ changes in a neighborhood $V$ of $0$. The transfer operator of the perturbed system is denoted by $L_{\PP_\eps}$. We assume:
\begin{itemize}
\item[(A1)] there exists $\tilde D>0$ such that 
\begin{equation}\label{eq:dist}
\left|\frac{g_{z, \omega}'(x)}{g_{z, \omega}'(y)}-1\right| \le  \tilde D|x-y|
\end{equation}
for any $x, y \in X$, $z\in Z$ and $\omega\in \Omega$. Moreover, there exists $M>0$ independent of $\eps$ such that for $i=2, 3$ we have
\begin{equation}\label{eq:avgdist}
\sup_{\eps\in V}\sum_{z\in Z}\sup_{x\in X}\int_{\Omega}|g^{(i)}_{z,\omega}|d\PP_\eps(\omega)\le M.
\end{equation}
\item[(A2)] There exists $\beta\in(0, 1)$ such that $\displaystyle{\sup_{\omega\in\Omega}\sup_{z\in Z}\sup_{x\in X}|g_{z,\omega}'(x)|\le \beta}$.
\end{itemize}

\begin{proposition}\label{lem:unifgap}
Under assumptions (A1)-(A2), for each $\eps\in V$ the operators $L_{\PP_\eps}$ has a uniform spectral gap on $C^1$ and $C^2$. In particular, the random dynamical system $(\Omega, \{T_\omega\}, \PP_\eps)$ admits a unique stationary density $h_\eps\in C^2$.
\end{proposition}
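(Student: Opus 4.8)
The plan is to derive a uniform-in-$\eps$ Lasota--Yorke inequality, deduce quasi-compactness from it, and then pin down the peripheral spectrum (and, crucially, a \emph{uniform} lower bound on the spectral gap) by a positivity/cone argument. First I would extract the two elementary consequences of (A1): letting $y\to x$ in \eqref{eq:dist} gives the pointwise bound $|g_{z,\omega}''/g_{z,\omega}'|\le\tilde D$ for all $z,\omega$, and since the branches are onto one has $\int_X|g_{z,\omega}'|\,dm=m(X_{z,\omega})$, so $\sum_z\sup_X|g_{z,\omega}'|\le 1+\tilde D\diam X=:C$ uniformly in $\omega$. Applying \eqref{eq:dist} along a random composition $g_{z_1,\omega_1}\circ\cdots\circ g_{z_n,\omega_n}$, whose domains contract geometrically at rate $\beta$ by (A2), yields a uniform distortion bound for compositions and hence $\|L_{\PP_\eps}^n 1\|_\infty\le C_\ast:=\exp\!\big(\tilde D\diam X/(1-\beta)\big)$ for all $n$ and all $\eps\in V$.

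Next, differentiating $L_{T_\omega}\Phi=\sum_z\Phi\circ g_{z,\omega}\cdot|g_{z,\omega}'|$ branchwise and iterating, I would obtain $\|(L_{\PP_\eps}^n\Phi)'\|_\infty\le\beta^n C_\ast\|\Phi'\|_\infty+K_n\|\Phi\|_\infty$, where the lower-order constant $K_n$ is finite already from the pointwise distortion bound (it makes $\sum_z\sup_X|g_{z,\omega}''|\le\tilde D C$ summable, so \eqref{eq:avgdist} is not yet needed). Choosing $n_0$ with $\theta:=\beta^{n_0}C_\ast<1$ gives the uniform Lasota--Yorke inequality $\|L_{\PP_\eps}^{n_0}\Phi\|_{C^1}\le\theta\|\Phi\|_{C^1}+K\|\Phi\|_{C^0}$. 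Running the same scheme one order higher, $(L_{T_\omega}\Phi)''$ has leading term contracted by $\beta^2$ plus lower-order terms involving the branch sums $\sum_z\sup_X|g^{(i)}_{z,\omega}|$, $i=2,3$; these are precisely the quantities controlled, after integration in $\omega$, by \eqref{eq:avgdist}, so one gets the uniform bound $\|L_{\PP_\eps}^{n_0}\Phi\|_{C^2}\le\theta\|\Phi\|_{C^2}+K'\|\Phi\|_{C^1}$.

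From here, Hennion's theorem (compactness of $C^1\hookrightarrow C^0$ and $C^2\hookrightarrow C^1$) gives that each $L_{\PP_\eps}$ is quasi-compact on $C^1$ and on $C^2$ with essential spectral radius $\le\theta^{1/n_0}<1$. To control the peripheral spectrum uniformly I would use a Birkhoff cone argument, which also yields existence/uniqueness of $h_\eps$ directly. On the log-Lipschitz cone $\mathcal{C}_a=\{\Phi\in C^1:\Phi>0,\ |\Phi'|\le a\Phi\}$, the identity $(L_{T_\omega}\Phi)'(x)=\sum_z\big(\Phi\circ g_{z,\omega}\cdot|g_{z,\omega}'|\big)(x)\cdot\Big(\tfrac{\Phi'\circ g_{z,\omega}}{\Phi\circ g_{z,\omega}}\,g_{z,\omega}'+\tfrac{g_{z,\omega}''}{g_{z,\omega}'}\Big)(x)$ exhibits $(\log L_{T_\omega}\Phi)'$ as a convex combination of terms of modulus $\le a\beta+\tilde D$; integrating in $\omega$, $L_{\PP_\eps}$ maps $\mathcal{C}_a$ into $\{\,|\Phi'|\le(a\beta+\tilde D)\Phi\,\}$. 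Fixing $a_0>\tilde D/(1-\beta)$ so that $a_1:=a_0\beta+\tilde D<a_0$, the image $L_{\PP_\eps}(\mathcal{C}_{a_0})$ lies in a subcone of finite Hilbert-metric diameter $\Delta=\Delta(a_0,a_1,\diam X)$, independent of $\eps$; hence by Birkhoff's theorem $L_{\PP_\eps}$ contracts the projective metric on $\mathcal{C}_{a_0}$ by $\tanh(\Delta/4)<1$, uniformly in $\eps$. This produces the unique normalized fixed point $h_\eps\in\mathcal{C}_{a_0}\subset C^1$ and, by the usual comparison of the projective metric with $\|\cdot\|_{C^1}$ on $\{\int\Phi\,dm=0\}$, a uniform spectral gap on $C^1$; the $C^2$ Lasota--Yorke inequality together with the standard principle that a Lasota--Yorke pair $(C^2,C^1)$ over a spectral gap on $C^1$ propagates the gap with the same leading eigendata then gives the uniform spectral gap on $C^2$ and $h_\eps\in C^2$.

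\textbf{Main obstacle.} No individual estimate is hard; the real work is bookkeeping of \emph{uniformity}, tracing every constant back to the $\eps$-independent data $\tilde D$, $M$, $\beta$. The two places that need care are: (i) the uniform distortion bound for random compositions underlying $\|L_{\PP_\eps}^n 1\|_\infty\le C_\ast$ and the $\eps$-independent finite-diameter estimate for the cone; and (ii) handling the countably many branches, where one may only use the $\PP_\eps$-averaged bound \eqref{eq:avgdist}, never a branchwise one. The cone argument is what makes the uniform mixing --- the feature invisible to the Lasota--Yorke inequality by itself --- transparent and $\eps$-uniform.
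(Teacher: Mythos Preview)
Your proposal is correct and shares the paper's overall architecture --- uniform distortion for random compositions, uniform Lasota--Yorke inequalities on $C^1$ and on $C^2$, and quasi-compactness --- but it diverges from the paper at the step where the peripheral spectrum is controlled. The paper argues simplicity of the eigenvalue $1$ by a direct positivity argument: any stationary density $h_\eps$ is Lipschitz, hence strictly positive on some subinterval, and since every branch is onto and uniformly contracting, a finite iterate of $L_{\PP_\eps}$ spreads this positivity over all of $X$, forcing $h_\eps>0$ everywhere and hence uniqueness; absence of other peripheral eigenvalues follows by applying the same reasoning to iterates. Your route via the Birkhoff cone $\mathcal{C}_a=\{\Phi>0:\ |\Phi'|\le a\Phi\}$ is a genuine alternative, and it buys something the paper's argument does not make explicit: because the contraction of the Hilbert metric depends only on $\beta$, $\tilde D$ and $\diam X$, the \emph{uniformity in $\eps$} of the spectral gap is automatic. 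In the paper's treatment the uniform Lasota--Yorke bounds the essential spectral radius uniformly, and simplicity is shown for each fixed $\eps$, but the uniform separation of $1$ from the remaining discrete spectrum is asserted rather than derived; strictly speaking one would need an extra ingredient (Keller--Liverani type stability, or precisely your cone contraction) to close that gap. Conversely, the paper's argument is lighter in machinery and makes the role of the ``piecewise onto'' hypothesis completely transparent. One small omission in your sketch: passing from the one-step $C^2$ estimate to the $n_0$-step Lasota--Yorke inequality requires iterating the averaged bound \eqref{eq:avgdist} to the compositions $g_{z,[\omega]_{n_0}}$, which is not automatic; the paper carries this out in a separate lemma (Lemma~\ref{niterate}), and you should flag that this inductive step is needed.
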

We postpone the proof of Proposition~\ref{lem:unifgap} to the appendix. 
\begin{remark}
Assumptions (A1) and (A2) are only needed to insure a uniform spectral gap of $L_\eps$ on $C^1$ and the uniqueness of the stationary density $h_\eps\in C^2$. It may be possible to prove such properties under a different set of conditions. Thus, to keep the exposition about linear response as general as possible, we do not assume conditions (A1)- (A2) below. Instead, in condition B, we assume that $L_{\PP_\eps}$ has a uniform spectral gap on $C^1$ and the existence of a unique of the stationary density $h_\eps\in C^2$.
\end{remark}

\vskip 0.5cm
\noindent{\bf Assumption B}
Assume that $L_{\PP_\eps}$ admits a uniform spectral gap on $C^1$. Moreover, assume that the random dynamical system $(\Omega, \{T_\omega\},\PP_\eps)$ admits a unique stationary density $h_\eps\in C^2$.\\ 

For each $z\in Z$, and $\Phi\in L^1(X)$, let 
\begin{equation}\label{psiz}
\psi_z(\eps,x)=\int_\Omega[\Phi\circ g_{z,\omega} |g_{z,\omega}'|](x) d\PP_\eps(\omega).
\end{equation}
\begin{itemize}
\item
For $\Phi=h_0\in C^2$ we assume that the partial derivatives $\partial_\eps \psi_z(\eps,x),$ $ \partial_x \psi_z(\eps,x),$ $\partial_x\partial_\eps \psi_z(\eps,x)$, $\partial_\eps \partial _x\psi_z(\eps,x)$ exist and jointly continuous in $(\eps, x)$ on $X\times V$. Hence, $\partial_\eps \partial _x\psi_z(\eps,x) = \partial _x\partial_\eps \psi_z(\eps,x)$. Moreover we assume that for $i=0,1$ we have
\begin{equation}\label{a3}
\sum_{z\in Z}\sup_{\eps\in V}  \sup_{x\in X}|
\partial_\eps \psi_z^{(i)}(\eps,x)| <\infty,
\end{equation}
 where $\psi_z^{(0)}=\psi_z$ and $\psi_z^{(1)}=\partial_x\psi_z$.
\item
In addition, we assume that for any $\Phi\in C^1$, $\psi_z(\eps,x)$ and $\partial_x \psi_z(\eps,x)$ exists and are jointly continuous. Moreover, for $i=0,1$ we assume that
\begin{equation}\label{a3'}
\sum_{z\in Z}\sup_{\eps\in V}\sup_{x\in X}|
 \psi_z^{(i)}(\eps,x)| <\infty. 
\end{equation}
\end{itemize}

\begin{theorem}\label{thm:LRuniform}
Let $(\Omega,\{T_\omega\},\PP_\eps)$ be a family of random dynamical systems as described above. 
Under assumption B, the density $ h_\eps$ of the stationary measure is differentiable as a $C^1$ element at $\eps=0$, 
that is there exists $ h^*\in C^1$ such that
\begin{equation}
\| \frac{h_\eps-h_0}{\eps}-h^* \|_{C^1} \to 0.
\end{equation}
In addition, the following linear response formula holds\footnote{Explicit linear response formulae will be derived in Section \ref{LRformula}.}:
\begin{equation}\label{eq:response} 
 h^*:=(I- L_{\PP_0})^{-1} \partial_\eps L_{\PP_\eps}h_0|_{\eps=0}, 
\end{equation}
where 
$$ 
\partial_\eps L_{\PP_\eps}h_0|_{\eps=0} = 
\partial_\eps \sum_{z\in Z}\int_\Omega[h_0\circ g_{z,\omega} |g_{z,\omega}'|] d\PP_\eps(\omega).
$$
\end{theorem}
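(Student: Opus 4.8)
The plan is to realize $h^*$ as the result of applying the standard linear-response machinery for Markov operators with a uniform spectral gap—which, by the remark in the excerpt, is proved in the appendix—to the one-parameter family $L_{\PP_\eps}$, and then to verify that this family is sufficiently regular for that abstract result to apply. Concretely, I would proceed as follows. First, I would record the abstract statement: if $L_\eps$ is a family of Markov operators on $C^1$ with a uniform spectral gap, with fixed points $h_\eps$, and if $\eps\mapsto L_\eps$ is differentiable at $\eps=0$ in an appropriate operator sense (as a map from $C^2$, say, into $C^1$, with $\partial_\eps L_\eps h_0|_{\eps=0}\in C^1$ and satisfying $\int \partial_\eps L_\eps h_0|_{\eps=0}\, dm = 0$), then $h_\eps$ is differentiable as a $C^1$ element and $h^* = (I-L_{\PP_0})^{-1}\partial_\eps L_{\PP_0}h_0|_{\eps=0}$, where $(I-L_{\PP_0})^{-1}$ is the bounded inverse on the zero-average subspace of $C^1$. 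Assumption B supplies the uniform spectral gap and the existence and uniqueness of $h_\eps\in C^2$, so the only thing to check is the regularity of $\eps\mapsto L_{\PP_\eps}$.

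Second, I would unpack the formula $L_{\PP_\eps}\Phi = \sum_{z\in Z}\psi_z(\eps,\cdot)$ with $\psi_z$ as in \eqref{psiz}. The summability bounds \eqref{a3} and \eqref{a3'}, together with the joint continuity of $\psi_z$, $\partial_x\psi_z$, $\partial_\eps\psi_z$ and $\partial_x\partial_\eps\psi_z$ assumed in Assumption B, are exactly what is needed to justify: (i) that $\Phi\mapsto L_{\PP_\eps}\Phi$ maps $C^1$ to $C^1$ continuously with a bound uniform in $\eps$; (ii) that for $\Phi = h_0\in C^2$, the series $\sum_z \partial_\eps\psi_z(\eps,x)$ and $\sum_z \partial_x\partial_\eps\psi_z(\eps,x)$ converge uniformly, hence one may differentiate under the sum and conclude that $\eps\mapsto L_{\PP_\eps}h_0$ is $C^1$ as a curve in $C^1$, with derivative $\partial_\eps L_{\PP_\eps}h_0 = \sum_z \partial_\eps\psi_z(\eps,\cdot)\in C^1$. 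This is the content of the displayed formula for $\partial_\eps L_{\PP_\eps}h_0|_{\eps=0}$ in the statement.

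Third, I would verify the zero-average (cohomological) condition $\int_X \partial_\eps L_{\PP_\eps}h_0|_{\eps=0}\, dm = 0$. This is automatic: each $L_{\PP_\eps}$ preserves the integral $\int_X \Phi\, dm$ (being an average of transfer operators, each of which does), so $\int_X L_{\PP_\eps}h_0\, dm = \int_X h_0\, dm$ for all $\eps$; differentiating in $\eps$ at $0$—legitimate by the uniform convergence established in the previous step—gives $\int_X \partial_\eps L_{\PP_\eps}h_0|_{\eps=0}\, dm = 0$. Hence $\partial_\eps L_{\PP_0}h_0|_{\eps=0}$ lies in the zero-average subspace on which $(I-L_{\PP_0})$ is invertible, by the spectral gap, and $h^*$ is well defined in $C^1$.

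The main obstacle is the second step: carefully justifying differentiation of the countable series term by term and identifying the limit in the $C^1$ topology, i.e. controlling $\|\tfrac{1}{\eps}(h_\eps - h_0) - h^*\|_{C^1}$. The clean way is to write, using $L_{\PP_\eps}h_\eps = h_\eps$ and $L_{\PP_0}h_0 = h_0$, the identity $(I-L_{\PP_0})(h_\eps - h_0) = (L_{\PP_\eps} - L_{\PP_0})h_\eps$, so that $\tfrac{1}{\eps}(h_\eps - h_0) = (I-L_{\PP_0})^{-1}\tfrac{1}{\eps}(L_{\PP_\eps}-L_{\PP_0})h_\eps$; then one needs $h_\eps \to h_0$ in $C^1$ (which follows from the uniform spectral gap and continuity of $\eps\mapsto L_{\PP_\eps}$), a uniform bound $\sup_\eps\|L_{\PP_\eps}\|_{C^1\to C^1}<\infty$ from \eqref{a3'}, and the convergence $\tfrac{1}{\eps}(L_{\PP_\eps}-L_{\PP_0})h_0 \to \partial_\eps L_{\PP_0}h_0|_{\eps=0}$ in $C^1$ from \eqref{a3}, plus an equicontinuity argument to pass from $h_\eps$ to $h_0$ inside $(L_{\PP_\eps}-L_{\PP_0})$. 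Assembling these with the bounded inverse $(I-L_{\PP_0})^{-1}$ on the zero-average subspace yields the claimed $C^1$-differentiability and the formula \eqref{eq:response}; the details are the routine part, but the bookkeeping of which estimate uses \eqref{a3} versus \eqref{a3'} and where the $C^2$-regularity of $h_0$ enters is where care is required.
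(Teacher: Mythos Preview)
Your proposal is correct and follows essentially the same route as the paper: reduce to the abstract linear-response result for Markov operators with a uniform spectral gap (Proposition~\ref{pro:lrmarkov} in the appendix), and then verify its hypotheses---differentiability of $\eps\mapsto L_{\PP_\eps}h_0$ at $\eps=0$ as a $C^1$ element (using \eqref{a3}) and continuity of $\eps\mapsto L_{\PP_\eps}\phi$ for general $\phi\in C^1$ (using \eqref{a3'})---exactly as the two lemmas following the theorem do.

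The one noteworthy difference is the resolvent identity you use. You write $(I-L_{\PP_0})(h_\eps-h_0)=(L_{\PP_\eps}-L_{\PP_0})h_\eps$, which forces you to control $(L_{\PP_\eps}-L_{\PP_0})(h_\eps-h_0)$ and hence to first establish $h_\eps\to h_0$ in $C^1$. The paper instead uses $h_\eps-h_0=(I-L_{\PP_\eps})^{-1}(L_{\PP_\eps}-L_{\PP_0})h_0$, which places $h_0$ (not $h_\eps$) in the difference of operators, so the differentiability hypothesis on $\eps\mapsto L_{\PP_\eps}h_0$ applies directly; continuity of $\eps\mapsto L_{\PP_\eps}\phi$ for general $\phi$ is then used only to show $(I-L_{\PP_\eps})^{-1}q\to(I-L_{\PP_0})^{-1}q$ via the second resolvent identity. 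Both arguments close, but the paper's choice avoids the preliminary step of proving $h_\eps\to h_0$ and the ``equicontinuity argument'' you flag as delicate.
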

\begin{proof}
Under assumption B, we verify in a series of lemmas below that the operators $L_{\PP_\eps}$ satisfy the assumptions\footnote{Proposition  \ref{pro:lrmarkov} provides general conditions to obtain linear response for systems whose Markov operators admit a uniform spectral gap on some Banach space.} of Proposition~\ref{pro:lrmarkov}.
\end{proof}

\begin{lemma} \label{diff-lemma}
The map $\eps\mapsto L_{\PP_\eps}h_0$ is differentiable at $\eps=0$ as a $C^1$ element.
\end{lemma}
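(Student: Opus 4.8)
The plan is to identify the candidate derivative as
\[
\eta \eqdef \sum_{z\in Z}\partial_\eps\psi_z(0,\cdot)=\partial_\eps L_{\PP_\eps}h_0\big|_{\eps=0},
\]
and to establish convergence of the difference quotient to $\eta$ separately in the $C^0$-norm and in the $C^0$-norm of the $x$-derivative. The only genuine difficulty is the interchange of the (possibly infinite) sum over branches $z$ with the limit $\eps\to0$; this is exactly what the summability hypotheses \eqref{a3} are tailored for, via a Weierstrass-type tail estimate combined with uniform continuity on finite blocks of branches.

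First I would check that $\eta$ is a well-defined element of $C^1$. Applying the first bullet of Assumption B to $\Phi=h_0\in C^2$, each $\partial_\eps\psi_z(0,\cdot)$ is $C^1$ in $x$ with derivative $\partial_x\partial_\eps\psi_z(0,\cdot)=\partial_\eps\partial_x\psi_z(0,\cdot)$, jointly continuous in $(\eps,x)$; by \eqref{a3} with $i=0$ and $i=1$ the series $\sum_z\partial_\eps\psi_z$ and $\sum_z\partial_x\partial_\eps\psi_z$ converge uniformly on $V\times X$, so termwise differentiation in $x$ is legitimate and $\eta\in C^1$ with $\eta'=\sum_z\partial_x\partial_\eps\psi_z(0,\cdot)$. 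The bound \eqref{a3'} applied to $h_0\in C^1$ likewise gives $L_{\PP_\eps}h_0=\sum_z\psi_z(\eps,\cdot)\in C^1$ with $\partial_x\big(L_{\PP_\eps}h_0\big)=\sum_z\partial_x\psi_z(\eps,\cdot)$ for every $\eps\in V$.

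Next, for the $C^0$ estimate, fix a compact neighbourhood $\bar V_0\subset V$ of $0$ and take $\eps$ small enough that $[0,\eps]\subset\bar V_0$. Writing
\[
\frac{L_{\PP_\eps}h_0-L_{\PP_0}h_0}{\eps}-\eta=\sum_{z\in Z}\Big(\frac{\psi_z(\eps,x)-\psi_z(0,x)}{\eps}-\partial_\eps\psi_z(0,x)\Big),
\]
the mean value theorem in the $\eps$-variable produces for each $z,x$ a point $\theta=\theta(z,x,\eps)\in(0,1)$ with $\frac{\psi_z(\eps,x)-\psi_z(0,x)}{\eps}-\partial_\eps\psi_z(0,x)=\partial_\eps\psi_z(\theta\eps,x)-\partial_\eps\psi_z(0,x)$. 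Given $\delta>0$, use \eqref{a3} with $i=0$ to pick a finite $F\subset Z$ with $\sum_{z\notin F}\sup_{\bar V_0\times X}|\partial_\eps\psi_z|<\delta$, so the tail is bounded by $2\delta$ in sup-norm uniformly for small $\eps$; for the finitely many $z\in F$, uniform continuity of $\partial_\eps\psi_z$ on the compact set $\bar V_0\times X$ forces $\sup_x|\partial_\eps\psi_z(\theta\eps,x)-\partial_\eps\psi_z(0,x)|\to0$ as $\eps\to0$. Hence the displayed sum tends to $0$ in $C^0$.

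Finally, the $C^1$ estimate follows from the identical argument applied to the $x$-derivative: using $\partial_x(L_{\PP_\eps}h_0)=\sum_z\partial_x\psi_z(\eps,\cdot)$ and $\partial_\eps\partial_x\psi_z=\partial_x\partial_\eps\psi_z$, the mean value theorem in $\eps$ reduces
\[
\partial_x\Big(\frac{L_{\PP_\eps}h_0-L_{\PP_0}h_0}{\eps}-\eta\Big)=\sum_{z\in Z}\big(\partial_\eps\partial_x\psi_z(\tilde\theta\eps,x)-\partial_\eps\partial_x\psi_z(0,x)\big)
\]
for suitable $\tilde\theta=\tilde\theta(z,x,\eps)\in(0,1)$, and the same splitting into a summable tail (controlled by \eqref{a3} with $i=1$, since $\psi_z^{(1)}=\partial_x\psi_z$) plus a finite block of uniformly continuous functions gives convergence to $0$ in $C^0$. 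Combining the two estimates yields $\left\|\frac{L_{\PP_\eps}h_0-L_{\PP_0}h_0}{\eps}-\eta\right\|_{C^1}\to0$, which is the claim, with $\eta=\partial_\eps L_{\PP_\eps}h_0|_{\eps=0}$ as in \eqref{eq:response}.
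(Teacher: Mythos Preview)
Your proof is correct and follows essentially the same approach as the paper's: both use the mean value theorem in $\eps$ to convert the difference quotient of each $\psi_z^{(i)}$ into $\partial_\eps\psi_z^{(i)}$ evaluated at an intermediate point, then combine joint continuity on the compact set $X\times\bar V_0$ with the summability condition \eqref{a3}. The paper packages this as ``(i) each $\eps\mapsto\psi_z(\eps)\in C^1$ is differentiable, and (ii) $\sum_z\sup_\eps\|\partial_\eps\psi_z\|_{C^1}<\infty$'', invoking the standard series differentiation theorem, while you unwind that theorem explicitly via the tail/finite-block splitting; the underlying argument is the same.
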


\begin{proof}
We consider the  maps $\psi_z$ defined in Assumption B with $\Phi=h_0\in C^2$ (by Lemma~\ref{lem:unifgap}). 
Since $L_{\PP_\eps}h_0=\sum_z \psi_z(\eps)$ it suffices to show that

(i) for each $z\in Z$, the map $\eps\in V\mapsto \psi_z(\eps)\in C^1(X)$ is differentiable;

(ii) the series $\sum_{z\in Z} \sup_{\eps\in V} \|\partial_\eps\psi_z\|_{C^1(X)}<\infty$.

We only prove (i) since (ii) follows from \eqref{a3}.

By the commutation relations given by the first item of assumption B we have
\begin{equation}\label{eq:com}
\partial_\eps \psi_z(\eps)^{(i)} = (\partial_\eps \psi_z(\eps))^{(i)},\quad i=0,1
\end{equation}
and these are continuous functions on $X \times V$.\\

Let $v\in V$ and $\eps$ be small. We have
\begin{equation}\label{eq:W2norm}
\begin{split}
&\|\psi_z({\eps+v})-\psi_z(v)-\eps(\partial_\zeta \psi_z(\zeta)|_{\zeta=v})\|_{C^1(X)}=\\
&\hskip 3cm\sum_{i=0}^1
\|\psi_z^{(i)}({\eps+v})-\psi_z^{(i)}(v)-\eps(\partial_\zeta \psi_z^{(i)}(\zeta)|_{\zeta=v})\|_\infty.
\end{split}
\end{equation}
For each $x$, by the mean value theorem, there exists $t_{x,\eps}^i$ such that
$\psi_z^{(i)}({\eps+v},x)-\psi_z^{(i)}(v,x)=\eps \partial_\zeta  \psi_z^{(i)}(\zeta,x)|_{\zeta=t_{x,\eps}^i}$, with $|t_{x,\eps}^i-v|<\eps$. Therefore, by the joint continuity,
\[
\eqref{eq:W2norm}
\le |\eps| \sum_{i=0}^1\sup_{x\in X} | \partial_\zeta  \psi_z^{(i)}(\zeta,x)|_{\zeta=t_{x,\eps}^i}- \partial_\zeta \psi_z^{(i)}(\zeta,x)|_{\zeta=v} |
=o(\eps).
\]
\end{proof}

\begin{lemma}
For any $\phi\in C^1$, 
the map $\eps\mapsto L_{\PP_\eps}\phi$ is continuous at $\eps=0$ as a $C^1$ element.
\end{lemma}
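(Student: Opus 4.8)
The plan is to mimic the structure of the proof of Lemma~\ref{diff-lemma}, but using the second item of Assumption B (the one valid for arbitrary $\Phi\in C^1$) in place of the first. So fix $\phi\in C^1$, and for each $z\in Z$ let $\psi_z(\eps,x)=\int_\Omega[\phi\circ g_{z,\omega}|g_{z,\omega}'|](x)\,d\PP_\eps(\omega)$ as in \eqref{psiz}. Since $L_{\PP_\eps}\phi=\sum_z\psi_z(\eps,\cdot)$, it suffices to prove that (i) for each $z$, the map $\eps\mapsto\psi_z(\eps,\cdot)\in C^1(X)$ is continuous at $\eps=0$, and (ii) the tail of the series $\sum_{z\in Z}\sup_{\eps\in V}\|\psi_z(\eps,\cdot)\|_{C^1(X)}$ is uniformly small, so that the convergence is uniform in $\eps$ and continuity passes to the sum. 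Claim (ii) is immediate from \eqref{a3'}, which gives $\sum_{z}\sup_{\eps\in V}\sup_{x\in X}|\psi_z^{(i)}(\eps,x)|<\infty$ for $i=0,1$; the tail estimate then forces the partial sums $S_N(\eps)=\sum_{z\le N}\psi_z(\eps,\cdot)$ to converge to $L_{\PP_\eps}\phi$ uniformly in $\eps\in V$ in the $C^1(X)$ norm.

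For claim (i), recall that the second item of Assumption~B asserts that for $\Phi=\phi\in C^1$ the functions $\psi_z(\eps,x)$ and $\partial_x\psi_z(\eps,x)$ exist and are jointly continuous in $(\eps,x)$ on $X\times V$. Fix $z$. Since $X$ is compact and the two functions $\psi_z^{(0)}$ and $\psi_z^{(1)}=\partial_x\psi_z$ are jointly continuous on the compact set $X\times\overline{V_0}$ for a small closed neighborhood $V_0$ of $0$, they are in fact uniformly continuous there; hence for $i=0,1$,
\[
\sup_{x\in X}\bigl|\psi_z^{(i)}(\eps,x)-\psi_z^{(i)}(0,x)\bigr|\longrightarrow 0\quad\text{as }\eps\to 0.
\]
Summing over $i$ gives $\|\psi_z(\eps,\cdot)-\psi_z(0,\cdot)\|_{C^1(X)}\to 0$, which is precisely continuity of $\eps\mapsto\psi_z(\eps,\cdot)$ as a $C^1$ element at $\eps=0$.

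Finally one combines (i) and (ii) by the standard uniform-limit argument: given $\delta>0$, pick $N$ with $\sum_{z>N}\sup_{\eps\in V}\|\psi_z(\eps,\cdot)\|_{C^1(X)}<\delta/3$ by (ii), then use (i) for the finitely many $z\le N$ to find a neighborhood of $0$ on which $\|S_N(\eps)-S_N(0)\|_{C^1(X)}<\delta/3$, so that $\|L_{\PP_\eps}\phi-L_{\PP_0}\phi\|_{C^1(X)}<\delta$. I expect no serious obstacle here: the only mild point is making sure the joint-continuity hypothesis in Assumption~B is genuinely strong enough to yield uniform continuity on a compact $(\eps,x)$-domain and hence the $C^1$-norm continuity of each term — but since $X$ is compact this is automatic once $V$ is shrunk to a relatively compact neighborhood of $0$, which is harmless.
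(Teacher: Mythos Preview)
Your proposal is correct and follows essentially the same approach as the paper: both reduce to showing (i) term-wise $C^1$-continuity of $\eps\mapsto\psi_z(\eps,\cdot)$ via joint continuity on the compact $X\times V$, and (ii) uniform summability of $\sum_z\sup_\eps\|\psi_z\|_{C^1}$ from \eqref{a3'}. You simply spell out the uniform-continuity and uniform-limit arguments more explicitly than the paper does.
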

\begin{proof}
We consider the  maps $\psi_z$ defined by \eqref{psiz} with a general $\Phi\in C^1$.  
Since $L_{\PP_\eps}\Phi=\sum_z\psi_z(\eps)$ it suffices to show that

(i) for each $z\in Z$, the map $\eps\in V\mapsto \psi_z(\eps)\in C^1(X)$ is continuous;

(ii) the series $\sum_{z\in Z} \sup_{\eps\in V} \|\psi_z\|_{C^1(X)}<\infty$.

We only prove (i) since (ii) follows from \eqref{a3'}.

Let $v\in V$ and $\eps$ be small. We have
\begin{equation}\label{eq:W3norm}
\|\psi_z({\eps+v})-\psi_z(v)\|_{C^1(X)}
=
\sum_{i=0}^1
\|\psi_z^{(i)}({\eps+v})-\psi_z^{(i)}(v)\|_\infty=o(\eps)
\end{equation}
by joint continuity.
\end{proof}

\section{Random dynamical systems with an inducing scheme}\label{nonuniRDS}

\subsection{Family of maps}\label{setup}

Let $X=[0,1]$ and $\Delta$ a closed subinterval of $X$.
We consider a finite number $\ell\ge1$ of one-parameter families of maps.
The parameter of the $k$th family is defined on a compact interval $I_k\subset\RR$ 
(we allow the possibility of $I_k$ to be reduced to a single point) $k=1,...,\ell$.
For each $k$ and $u\in I_k$ the map $T_{k,u}\colon X\to X$ is
piecewise $C^1$ with a finite or countable number of monotonic full branches.
Let $\P_{k,u}$ be the partition of monotonicity intervals of $T_{k,u}$.
We assume that $\Delta$ is a union of elements of $\P_{k,u}$ for all $k,$ and $u\in I_k$.
Let $N^{in}$ and $N^{out}$ be two disjoint copies of $\NN$. We enumerate the branches of $T_{k,u}$
starting from $\Delta$ with $N^{in}$ and the others with $N^{out}$.

Let $L_{T_{k,u}}$ denote the transfer operator associated with $T_{k,u}$, which is defined by 
\[
L_{T_{k,u}}(\phi)(x) = \sum_{T_{k,u}(y)=x} \phi(y) \frac{1}{|T_{k,u}'(y)|}.
\]
For convenience and the purpose of inducing, 
to keep track of the family $k$ we add it into the label of the branch.
Let $S^{in}= \{1,\ldots,\ell\}\times N^{in}$, $S^{out}= \{1,\ldots,\ell\}\times N^{out}$, and let $S=S^{in}\cup S^{out}$ be the new labelling set.

When $s$ and $k$ are compatible, that is the first coordinate of $s$ is $k$, 
we denote by $g_{s,k,u}$ the inverse of the $s$th branch of $T_{s,k,u}$.
Then $L_{T_{k,u}}$ reads
\begin{equation}\label{forF}
L_{T_{k,u}}(\phi) = \sum_{s\in S} \phi\circ g_{s,k,u}\cdot |g_{s,k,u}'|,
\end{equation}
where we ignore, here and in the rest of the paper, terms that correspond to non compatible $s$ and $k$.

\subsection{Stationary measure of the Markov process}\label{markstatoo}

Let $\Omega=\{1,\ldots,\ell\}\times \RR$. Let $\PP$ be a probability measure on $\Omega$,
supported on $\cup_{k=1}^\ell \{k\}\times I_k$. Let $\pi$ be the marginal measure of $\PP$ on $\{1,\ldots,\ell\}$ and 
$\eta_{k}$ be the conditional measure of $\PP$ on $\{k\}\times I_k$. We study the random dynamical systems defined by the i.i.d. composition of maps $T_\omega$,
with $\omega$ distributed according to $\PP$.
The random dynamical system induces a Markov process with
transition kernel 
\[
p(x,A) = \int_\Omega 1_A(T_\omega(x)) d\PP(\omega).
\]
We say that a measure $\mu$ on $X$ is stationary if for any measurable $A\subset X$
\[
\int_X p(x,A)d\mu(x) = \mu(A),
\]
or equivalently, for any $\phi\colon X\to \RR$ measurable and bounded,
\[
\int_X\int_\Omega \phi\circ T_\omega(x) d\PP(\omega) d\mu(x) = \int \phi(x) d\mu(x).
\]
For $\phi \in L^{\infty}(X)$ and $\psi\in L^1(X)$ we have
\[
\begin{aligned}
\int_X\int_\Omega\phi\circ T_\omega \psi  d\PP(\omega)dm
&=
\int_\Omega\int_X\phi\circ T_\omega \psi dm d\PP(\omega)\\
&=
\int_\Omega \int_X \phi L_{T_\omega}\psi dm d\PP(\omega)\\
&=
\int_X \phi \int_\Omega L_{T_\omega}\psi d\PP(\omega)  dm 
\end{aligned}
\]
We set
$$
L_{\PP}\psi := \int_\Omega L_{T_\omega}\psi d\PP(\omega). 
$$
In particular, any stationary measure $\mu$ absolutely continuous with respect to $m$, with density $h$, satisfies
\[
L_\PP h=h.
\]
$ L_\PP$ is called the transfer operator of this random dynamical system. 

\subsection{Examples of random systems}
To illustrate the construction in Subsection \ref{setup}, we provide the following two examples\footnote{Later in Section \ref{LSVmaps} we will show that the random systems in both examples admit linear response for suitable perturbations of $\PP$.}:
\begin{example}[Gauss-R\'enyi]\label{Ex:GR} 
Let $\G$ and $\R$ be respectively the Gauss and R\'enyi transformations on the unit interval.
Recall that $\G(x)=1/x \mod 1$ and $\R(x)=1/(1-x)\mod 1$. 
The random system consists of choosing
randomly the Gauss and the R\'enyi map, with respective probabilities $p$ and $1-p$, with $p\in[0,1]$.
We model this example with $\ell=2$, $I_1=I_2=\{0\}$, $T_{1,0}=\G$, $T_{2,0}=\R$,
$\pi_{1}=p$, $\pi_2=1-p$ which determines $\PP$. This random dynamical system is used to study random continued fractions \cite{KKV}.
\end{example}
\begin{example}[Pommeau-Manneville]\label{Ex:PM}
Let $[\alpha_0,\alpha_1]\subset (0,1)$. For $\alpha\in [\alpha_0,\alpha_1]$ a map
 $\T_\alpha$  is defined by: 
  $$\T_{\alpha}(x)=\begin{cases}
       x(1+2^{\alpha}x^{\alpha}) \quad x\in[0,\frac{1}{2}]\\
       2x-1 \quad x\in(\frac{1}{2},1]
       \end{cases}.$$
The random system consists of choosing randomly the parameter $\alpha$ with probability $\eta$,
supported on $[\alpha_0,\alpha_1]$.
We model this example with $\ell=1$, $I_1=[\alpha_0,\alpha_1]$, $T_{1,\alpha}=\T_\alpha$,
$\pi_{1}=1$, $\eta_{1}=\eta$ which determines $\PP$. 
\end{example}

\subsection{An inducing scheme and assumptions}\label{ss:T}

Given $\hat\omega\in \Omega^\NN$, we write $T_{\hat\omega}^n:=T_{\omega_{n-1}}\circ \cdots \circ T_{\omega_1}\circ T_{\omega_0}:X\to X$. For $\hat\omega\in\Omega^\NN$ we define $\hat T_{\hat\omega}$ as the first return map under the orbit of $T_{\hat\omega}^n$ to $\Delta$; i.e., for $x\in\Delta$
$$\hat T_{\hat\omega}(x)=T_{\hat\omega}^{R_{\hat\omega}(x)}(x),$$
where 
$$R_{\hat\omega}(x)=\inf\{n\ge 1:\, T_{\hat\omega}^{n}(x)\in\Delta\}.$$
Let $Z$ be the set of finite sequences of the form $z = z_0z_1\ldots z_n$, 
where $z_0\in S^{in}$ and $z_i\in S^{out}$ for $i=1,\ldots,n$ and $n \in\mathbb N$.  
We denote by $|z|=n+1$ the length of the word $z\in Z$. 
We set $g_{z,\omega}=g_{z_0,\omega_0}\circ g_{z_1,\omega_1}\circ\cdots\circ g_{z_n,\omega_n}$. Then for $x\in X$ we have $T_{\hat\omega}^{n+1}\circ g_{z,\hat\omega}(x)=x$. For each $\hat\omega$, the cylinder sets $g_{z,\hat\omega}(\Delta)$, form a partition of $\Delta$ (mod $0$).
Note that on $g_{z,\hat\omega}(\Delta)$ we have $R_{\hat\omega}(\cdot)=n+1$. We assume that the maps $( \Delta,\{ T_{\hat\omega} \}_{\hat\omega\in\Omega^\NN})$ satisfy the assumptions in Subsection~\ref{class}: piecewise $C^3$ and piecewise onto.

\subsection{An induced random dynamical system}

Let $\hat\Omega=\Omega^{\NN}$ and $\hat \PP=\PP^{\NN}$. We study the random dynamical system defined by the i.i.d. composition of maps $\hat T_{\hat \omega}$,
with $\hat\omega$ distributed according to $\hat\PP$. Following the framework of Subsection \ref{markstat}, for $\Phi\in L^1(\Delta)$, the transfer operator of this induced random system is given by 
\begin{equation}\label{LhatP}
\hat L_{\hat\PP} \Phi =\int_{\hat\Omega}\hat L_{\hat\omega}\Phi d\hat\PP(\hat\omega)= \sum_{z\in Z} \int_{\hat\Omega} \Phi\circ g_{z,\hat\omega}|g'_{z,\hat\omega}| d\hat\PP(\hat\omega),
\end{equation}
where $\hat L_{T_{\hat\omega}}$ is the transfer operator associated with $\hat T_{\hat\omega}$. Notice that $\hat L_{\hat\PP}$ reduces to  
$$\hat L_{\hat\PP} \Phi=\sum_{z\in Z} \int_{\Omega} \Phi\circ g_{z,\omega}|g'_{z,\omega}| d\PP(\omega).$$
The density $\hat h$ of any absolutely continuous stationary measure $\hat\mu$ of the induced random system satisfies
\[
\hat L_{\hat \PP} \hat h= \hat h.
\]
\subsection{Unfolding the density of the induced random dynamical system}
For $\Phi\in L^1(\Delta)$, let $F_{\PP}(\Phi)\colon X\to\RR$ be defined by
\begin{equation}\label{theF}
F_{\PP}(\Phi):=1_\Delta\Phi + (1-1_\Delta)\sum_{z\in Z}\int_{\hat\Omega}\Phi\circ g_{z,\hat\omega}|g_{z,\hat\omega}'|d\hat\PP(\hat\omega).
\end{equation}
Note that $F_{\PP}$ is a linear operator. In the next lemma we show that if $\hat h$ is a stationary density for the induced random system then $F_\PP\hat h$ is a stationary density (up to normalization\footnote{Working with un-normalized densities keeps the operator $F_\PP$ linear.})  for the original one discussed in Subsection \ref{markstatoo}.
\begin{proposition}\label{Le:unfold}
Let $\hat h\in L^{1}(\Delta)$ be such that $\hat L_{\hat\PP}\hat h=\hat h$. 
Then $L_{\PP}(F_\PP\hat h)=F_\PP\hat h$.
\end{proposition}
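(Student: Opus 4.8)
The plan is to verify the stationarity identity $L_\PP(F_\PP\hat h) = F_\PP\hat h$ by a direct computation that decomposes the transfer operator $L_\PP$ of the original system according to the return-time structure used to build the induced system. Write $h := F_\PP\hat h$. By definition, on $\Delta$ we have $h = \hat h$, and on $X\setminus\Delta$ we have $h = \sum_{z\in Z}\int_{\hat\Omega}\hat h\circ g_{z,\hat\omega}|g_{z,\hat\omega}'|\,d\hat\PP(\hat\omega)$; intuitively $h$ is the measure obtained by pushing the induced stationary measure on $\Delta$ around its excursions outside $\Delta$ before the next return. The key combinatorial fact I would isolate first is a one-step relation: pushing $h$ forward one step of the original random dynamics either stays outside $\Delta$ (extending an excursion word $z_1\ldots z_n$ by one more $S^{out}$ letter, or starting a fresh excursion with an $S^{in}$ letter), or it lands back in $\Delta$ (completing a word $z\in Z$ and contributing, via $\hat L_{\hat\PP}$, to the part of $h$ supported on $\Delta$).

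Concretely, the first step is to write $L_\PP h = 1_\Delta\cdot L_\PP h + (1-1_\Delta)\cdot L_\PP h$ and expand $L_\PP h = \sum_{s\in S}\int_\Omega h\circ g_{s,\omega}|g_{s,\omega}'|\,d\PP(\omega)$. Then I split the sum over the single branch label $s$ according to whether $s\in S^{in}$ or $s\in S^{out}$, and further, inside each $h\circ g_{s,\omega}$, I substitute the two-piece definition of $h = F_\PP\hat h$ depending on whether the point $g_{s,\omega}(x)$ lies in $\Delta$ or not. Restricting to the component supported on $X\setminus\Delta$: a term where $g_{s,\omega}(x)\notin\Delta$ must have $s\in S^{out}$ (since $S^{in}$-branches land in $\Delta$... careful here — actually $S^{in}$ branches are those whose domain touches $\Delta$; I must use that $\Delta$ is a union of partition elements, so each branch is either entirely over $\Delta$ or entirely outside it after one step). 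Using the cocycle identity $g_{s,\omega_0}\circ g_{z,\hat\omega'} = g_{s z,\hat\omega}$ for the concatenated word $sz$, and $|g_{sz,\hat\omega}'| = |g_{s,\omega_0}'\circ g_{z,\hat\omega'}|\cdot|g_{z,\hat\omega'}'|$, together with $\hat\PP = \PP^{\NN}$ being a product measure (so that $\int_{\hat\Omega} = \int_\Omega\int_{\hat\Omega}$), the two contributions — prepending an $S^{out}$ letter to an existing word $z\in Z$, and prepending an $S^{in}$ letter to $\hat h$ itself — reassemble exactly into $\sum_{z'\in Z}\int_{\hat\Omega}\hat h\circ g_{z',\hat\omega}|g_{z',\hat\omega}'|$, which is the $X\setminus\Delta$ part of $h$. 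For the component supported on $\Delta$: a term lands in $\Delta$ precisely when it completes a full word $z\in Z$ (an $S^{in}$ start followed by zero or more $S^{out}$ letters, with the final branch landing back in $\Delta$); summing these contributions and again using the product structure of $\hat\PP$ gives exactly $\hat L_{\hat\PP}\hat h$, which equals $\hat h = 1_\Delta h$ by hypothesis.

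The main obstacle I anticipate is purely bookkeeping: correctly matching the branch-indexing of the original maps $T_{k,u}$ (labels in $S = S^{in}\cup S^{out}$, with the family index $k$ and parameter $u$ carried along) with the induced words $z = z_0z_1\cdots z_n \in Z$ and their inverse-branch cocycles $g_{z,\hat\omega}$, and being scrupulous about which branches map into $\Delta$ versus out of it — this is where the assumption that $\Delta$ is a union of monotonicity intervals of every $T_{k,u}$ is essential, since it guarantees that "being in $\Delta$ after one step" is decided by the branch label alone and not by the point. A secondary technical point is justifying the interchange of the sum over $Z$ (a countable sum) with the integrals over $\Omega$ and $\hat\Omega$; this is legitimate because all terms are nonnegative (densities and $|g'|$ are nonnegative) and $\hat h\in L^1$, so Tonelli's theorem applies and no integrability issue arises. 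Once the indexing is set up cleanly, the identity falls out as a rearrangement of a single absolutely convergent series, and the proof is complete.
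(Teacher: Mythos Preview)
Your overall strategy is the same as the paper's: split the single step $L_\PP$ according to whether the inverse branch lands in $\Delta$ (label in $S^{in}$) or outside (label in $S^{out}$), use the product structure $\hat\PP=\PP^\NN$, and recognize the resulting sum as $\sum_{z\in Z}\int_{\hat\Omega}\hat h\circ g_{z,\hat\omega}|g_{z,\hat\omega}'|\,d\hat\PP$, which equals $\hat L_{\hat\PP}\hat h=\hat h$ on $\Delta$ and $F_\PP\hat h$ off $\Delta$ by definition.

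There is, however, a concrete error in your concatenation direction. In $L_\PP(F_\PP\hat h)=\sum_{s}\int_\Omega (F_\PP\hat h)\circ g_{s,\omega}|g_{s,\omega}'|\,d\PP$, when $s\in S^{out}$ you must substitute $(F_\PP\hat h)\circ g_{s,\omega}=\sum_z\int_{\hat\Omega}\hat h\circ g_{z,\hat\omega'}\circ g_{s,\omega}\,|g_{z,\hat\omega'}'\circ g_{s,\omega}|\,d\hat\PP$, so the relevant composition is $g_{z,\hat\omega'}\circ g_{s,\omega}$, not $g_{s,\omega_0}\circ g_{z,\hat\omega'}$ as you wrote. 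With the paper's convention $g_{z,\hat\omega}=g_{z_0,\omega_0}\circ\cdots\circ g_{z_n,\omega_n}$, this is $g_{zs,\hat\omega}$ with $s$ \emph{appended} at the end of the word, and $\hat\omega=(\omega'_0,\ldots,\omega'_n,\omega,\ldots)$. Your ``prepending'' would produce words $sz$ with first letter in $S^{out}$, which are not in $Z$, so the reassembly you claim would fail. Once you reverse the order (append rather than prepend), the bijection $\{zs: z\in Z,\ s\in S^{out}\}\cup\{s: s\in S^{in}\}=Z$ holds and the argument goes through exactly as in the paper. Your remark that ``each branch is either entirely over $\Delta$ or entirely outside it'' is the correct reason why the $S^{in}/S^{out}$ dichotomy decides the value of $F_\PP\hat h\circ g_{s,\omega}$ independently of $x$; and your observation about Tonelli is the right justification for rearranging the countable sums and integrals. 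The separate treatment of the $\Delta$ and $X\setminus\Delta$ pieces that you propose is unnecessary: the paper computes the single expression once and then identifies it on each piece.
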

\begin{proof}
Using the expression \eqref{forF} for $L_\PP$ we get
\[
\begin{aligned}
& L_{\PP}F_\PP\hat h = 
\int_\Omega L_{T_\omega} F\hat h d\PP(\omega) \\
& =
\sum_{s\in S^{in}}
\int_\Omega (F\hat h)\circ g_{s,\omega}\cdot |g'_{s,\omega}| d\PP(\omega) 
+
\sum_{s\in S^{out}}
\int_\Omega (F\hat h)\circ g_{s,\omega}\cdot |g'_{s,\omega}| d\PP(\omega) 
\\
&=
\sum_{s\in S^{in}}
\int_\Omega \hat h\circ g_{s,\omega}\cdot |g'_{s,\omega}| d\PP(\omega) 
\\
&+
\sum_{s\in S^{out}}
\int_\Omega  \sum_{z\in Z}\left(\int_{\hat \Omega} \hat h\circ g_{z,\hat\omega}\cdot |g'_{z,\hat\omega}| d\hat\PP(\hat\omega)\right)\circ g_{s,\omega}\cdot |g'_{s,\omega}| d\PP(\omega) \\
&=(I) + (II).
\end{aligned}
\]
The expression (II) can be rewritten, 
\[
 \sum_{z\in Z}\sum_{s\in S^{out}}\int_{\Omega^{|z|+1}}
 \left( \hat h\circ g_{z,\omega_0\ldots\omega_{|z|-1}}\cdot |g'_{z,\omega_0\ldots\omega_{|z|-1}}| \right)\circ g_{s,\omega_{|z|}}\cdot |g'_{s,\omega_{|z|}}| d\PP^{|z|+1}(\omega_0\ldots\omega_{|z|})
\]
Therefore, by using the fact that for any $n\in\NN$, $(\omega_0,\ldots,\omega_n,\omega)$ (under $\hat\PP\times\PP$) has the same distribution as $\omega_0\cdots\omega_{n+1}$ under $\hat\PP$,
\[
(II) = \sum_{z\in Z,|z|\ge 2}\int_{\hat\Omega} \hat h\circ g_{z,\hat\omega}\cdot |g'_{z,\hat\omega}| d\hat\PP(\hat\omega).
\]
Finally, 
\[
(I)+(II)  = \sum_{z\in Z}\int_{\hat\Omega} \hat h\circ g_{z,\hat\omega}\cdot |g'_{z,\hat\omega}| d\hat\PP(\hat\omega).
\]
Therefore, on $\Delta$, $L_{\PP}F_\PP\hat h$ is equal, by \eqref{LhatP}, to $\hat L_{\hat\PP}\hat h=\hat h =F_\PP\hat h$,
and outside $\Delta$ it is equal to $F_\PP\hat h$ by definition.
\end{proof}

\subsection{The perturbed random system}\label{PRTS}
 Let $\PP_\eps$ be a family of probability measures on $\Omega$, supported on $\cup_{k=1}^\ell \{k\}\times I_k$. Let $V$ be a neighbourhood of $0$. 
Let $\pi_\eps$ be the marginal measure of $\PP_\eps$ on $\{1,\ldots,\ell\}$ and 
$\eta_{k,\eps}$ be the conditional measure of $\PP_\eps$ on $I_k\times\{k\}$. As in Subsection~\ref{pertunif},
we are interested in studying the change in the statistical behaviour of the random system $(\Omega, \{T_\omega\}, \PP_\eps)$ as $\eps$ changes.  We assume that $\hat L_{\hat\PP_\eps}$ admits a uniform spectral gap on $C^1$. Moreover, assume that the random dynamical system $(\hat\Omega,\{\hat T_{\hat\omega}\}, \hat\PP_\eps)$  admits a unique stationary density $\hat h_\eps\in C^2$. By Proposition \ref{Le:unfold}, the stationary densities of the original random system $(\Omega,\{T_{\hat\omega}\}, \PP_\eps)$ and the induced one $(\hat\Omega,\{\hat T_{\hat\omega}\}, \hat\PP_\eps)$ are related by\footnote{Note that $h_\eps$ is un-normalized. When $h_\eps$ is integrable; i.e., when the random system preserves a probabilistic acsm, once the derivative of $h_\eps$ is obtained, the derivative of the normalized density can be easily computed. Indeed, if $h_\eps=h +\eps h^*+o(\eps)$, then $\int h_\eps=\int h +\eps \int h^*+o(\eps)$. Thus, $\partial_{\eps}(\frac{h_\eps}{\int h_\eps}){|}_{\eps=0}=h^*-h\int h^*$.}
\begin{equation}\label{eq:density}
h_\eps = F_{\PP_\eps}(\hat h_\eps).
\end{equation} 
Let $\mathcal{H}$ denote the set of continuous functions on $(0,1]$ with the norm $$\parallel f\parallel_{\mathcal{H}}=\sup\limits_{x\in(0,1]}|x^{\gamma}f(x)|,$$ for a fixed $\gamma\ge0$. When equipped with the norm $\parallel \cdot\parallel_{\mathcal{H}}$, $\mathcal{H}$ is a Banach space. For each $z\in Z$, and $\Phi\in L^1(X)$, let 
\begin{equation}\label{hpsiz}
\hat\psi_z(\eps,x)=\int_{\hat\Omega}[\Phi\circ g_{z,\hat\omega} |g_{z,\hat\omega}'|](x) d\hat\PP_\eps(\hat\omega).
\end{equation}
In Theorem \ref{thm:A} below we prove the differentiability, at $\eps=0$, of $\eps\mapsto h_\eps$ as an element of $\mathcal H$ under the following set of conditions:
\begin{itemize}
\item
For $\Phi=\hat h_0\in C^2$ we assume that the partial derivatives $\partial_\eps \hat\psi_z(\eps,x)$, $\partial_x \hat\psi_z(\eps,x)$, $\partial_x\partial_\eps \hat\psi_z(\eps,x)$, $\partial_\eps \partial _x\hat\psi_z(\eps,x)$ exist and are jointly continuous on $\Delta\times V$, whence, satisfy the commutation relation  $\partial_\eps \partial _x\hat\psi_z(\eps,x)\\ =\partial _x\partial_\eps \hat\psi_z(\eps,x)$.   For any $\Phi\in C^1$, $\hat\psi_z(\eps,x)$ and $\partial_\eps \hat\psi_z(\eps,x)$  exist and are jointly continuous on $(0,1]\times V$.  
Moreover we assume that for $i=0,1$ we have
\begin{equation}\label{ha3}
\sum_{z\in Z}\sup_{\eps\in V}  \sup_{x\in\Delta}|
\partial_\eps \hat\psi_z^{(i)}(\eps,x)| <\infty,
\end{equation}
where $\hat\psi_z^{(0)}=\hat\psi_z$ and $\hat\psi_z^{(1)}=\partial_x\hat\psi_z$.
\item For $\Phi\in C^1$, $\psi_z(\eps,x)$ and $\partial_x \psi_z(\eps,x)$ exists and are jointly continuous. Moreover, for $i=0,1$ we assume that
\begin{equation}\label{ha3'}
\sum_{z\in Z}\sup_{\eps\in V}\sup_{x\in\Delta}|
 \hat\psi_z^{(i)}(\eps,x)| <\infty. 
\end{equation}
\item For any $\Phi\in C^0$, we assume that
\begin{equation}\label{ha4}
\sum_{z\in Z}\sup_{\eps\in V}\|
 \hat\psi_z(\eps,\cdot)\|_{\mathcal H} <\infty. 
\end{equation}
\item
For $\Phi=\hat h_0\in C^2$ we assume that 
\begin{equation}\label{ha4'}
\sum_{z\in Z}\sup_{\eps\in V}  \|
\partial_\eps \hat\psi_z(\eps,\cdot)\|_{\mathcal H} <\infty.
\end{equation}

\end{itemize}
\begin{theorem}\label{thm:A}
Let $(\Omega,\{T_\omega\},\PP_\eps)$ be a family of random dynamical systems defined as
 in Subsection~\ref{PRTS}. Then
\begin{enumerate}
\item there exists $h^*\in\mathcal H$ such that
$$\lim_{\eps\to 0}||\frac{h_{\eps}-h_0}{\eps}-h^*||_{\mathcal H}=0;$$
i.e., $h_\eps$ is differentiable as an element of $\mathcal H$ with respect to $\eps$ at $\eps=0$.
\item In particular, if the conditions \eqref{ha4} and \eqref{ha4'} hold for $\gamma<1$ in the definition of $\mathcal H$, then
$$\lim_{\eps\to 0}||\frac{h_{\eps}-h_0}{\eps}-h^*||_{1}=0.$$
\end{enumerate}
\end{theorem}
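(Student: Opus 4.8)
The plan is to reduce Theorem~\ref{thm:A} to Theorem~\ref{thm:LRuniform} applied to the induced random system $(\hat\Omega,\{\hat T_{\hat\omega}\},\hat\PP_\eps)$, and then to transfer the differentiability statement from the induced density $\hat h_\eps$ to the original density $h_\eps$ via the linear operator $F_{\PP_\eps}$ using the relation \eqref{eq:density}. The first step is to observe that the induced maps $(\Delta,\{\hat T_{\hat\omega}\})$ satisfy the assumptions of Subsection~\ref{class} by the setup in Subsection~\ref{ss:T}, and that conditions \eqref{ha3}, \eqref{ha3'} are exactly conditions \eqref{a3}, \eqref{a3'} for these induced data; hence Assumption~B holds for $\hat L_{\hat\PP_\eps}$ (the uniform spectral gap and the unique stationary density $\hat h_\eps\in C^2$ being assumed in Subsection~\ref{PRTS}). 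Theorem~\ref{thm:LRuniform} then gives $\hat h^*\in C^1$ with $\|(\hat h_\eps-\hat h_0)/\eps-\hat h^*\|_{C^1}\to 0$, together with the linear response formula $\hat h^*=(I-\hat L_{\hat\PP_0})^{-1}\partial_\eps\hat L_{\hat\PP_\eps}\hat h_0|_{\eps=0}$.

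The core of the new work is the passage through $F_{\PP_\eps}$. Write $h_\eps=F_{\PP_\eps}(\hat h_\eps)$ and decompose
\begin{equation}\label{eq:Fdecomp}
\frac{h_\eps-h_0}{\eps} = F_{\PP_\eps}\!\left(\frac{\hat h_\eps-\hat h_0}{\eps}\right) + \frac{F_{\PP_\eps}-F_{\PP_0}}{\eps}(\hat h_0).
\end{equation}
For the first term I would prove that $F_{\PP_\eps}$ is bounded from $C^1(\Delta)$ (or $C^0(\Delta)$) to $\mathcal H$, uniformly in $\eps\in V$: indeed, on $\Delta$ it is the identity, and on $X\setminus\Delta$ it is $\sum_{z\in Z}\hat\psi_z(\eps,\cdot)$ which is controlled in $\mathcal H$ by \eqref{ha4}; moreover $F_{\PP_\eps}\to F_{\PP_0}$ in the strong operator topology $C^1\to\mathcal H$ by joint continuity of the $\hat\psi_z$ and dominated convergence over the sum. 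Since $(\hat h_\eps-\hat h_0)/\eps\to\hat h^*$ in $C^1$, a standard $3\eps$-argument gives $F_{\PP_\eps}((\hat h_\eps-\hat h_0)/\eps)\to F_{\PP_0}(\hat h^*)$ in $\mathcal H$. For the second term, I would show that $\eps\mapsto F_{\PP_\eps}(\hat h_0)$ is differentiable at $\eps=0$ as an element of $\mathcal H$: its $X\setminus\Delta$ part is $\sum_z\hat\psi_z(\eps,\cdot)$ with $\Phi=\hat h_0\in C^2$, each term is differentiable in $\eps$, and the differentiated series converges in $\mathcal H$ uniformly in $\eps$ by \eqref{ha4'}, so term-by-term differentiation and a mean-value-theorem estimate (as in Lemma~\ref{diff-lemma}) yield a limit $G^*\in\mathcal H$ with $(F_{\PP_\eps}-F_{\PP_0})(\hat h_0)/\eps\to G^*$ in $\mathcal H$. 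Adding, $h^*:=F_{\PP_0}(\hat h^*)+G^*\in\mathcal H$ is the desired derivative, proving part (1). For part (2), when $\gamma<1$ the $\mathcal H$-norm dominates the $L^1$-norm on $(0,1]$ (since $\int_0^1 x^{-\gamma}\,dx<\infty$), so $\|\cdot\|_1\le C\|\cdot\|_{\mathcal H}$ on functions bounded in $\mathcal H$, and the $L^1$-convergence follows immediately from part (1); one only needs \eqref{ha4}, \eqref{ha4'} to hold with this $\gamma$ to ensure the relevant quantities lie in $\mathcal H$.

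The main obstacle I anticipate is the uniform-in-$\eps$ control of the tail of the series $\sum_{z\in Z}\hat\psi_z(\eps,\cdot)$ in the $\mathcal H$-norm near $x=0$: the weight $x^\gamma$ interacts with the geometry of the induced cylinders $g_{z,\hat\omega}(\Delta)$ accumulating at the neutral fixed point, and one must check that \eqref{ha4}, \eqref{ha4'} indeed provide summable bounds that survive the passage to the limit. A secondary technical point is justifying the interchange of $\partial_\eps$ with the infinite sum defining $F_{\PP_\eps}(\hat h_0)$ in the $\mathcal H$-topology, which is handled exactly as in Lemma~\ref{diff-lemma} but now with the $\|\cdot\|_{\mathcal H}$ norm in place of $\|\cdot\|_{C^1}$, using \eqref{ha4'} for the dominated convergence of the series of $\eps$-derivatives.
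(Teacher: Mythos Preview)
Your proposal is correct and follows essentially the same route as the paper. The decomposition \eqref{eq:Fdecomp} is precisely the algebraic identity underlying the paper's argument; your uniform boundedness of $F_{\PP_\eps}\colon C^0\to\mathcal H$ and the convergence $F_{\PP_\eps}(\hat h^*)\to F_{\PP_0}(\hat h^*)$ are the two parts of the paper's Lemma~\ref{3rdstep}, the differentiability of $\eps\mapsto F_{\PP_\eps}(\hat h_0)$ in $\mathcal H$ is the paper's Lemma~\ref{4thstep}, and your $G^*$ is the paper's $Q\hat h_0$, so that $h^*=F_{\PP_0}(\hat h^*)+Q\hat h_0$ as in \eqref{eq:lrf}.
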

\begin{remark}
An explicit formula of the derivative $h^*$ is given in Section~\ref{LRformula}.
\end{remark}
\begin{proof}(of Theorem \ref{thm:A})
Assumptions \eqref{ha3} and \eqref{ha3'} state that the induced system satisfies the assumptions of Section~\ref{ss:hatT}. Let $ \hat h^* = \partial_\eps\hat h_\eps|_{\eps=0}$ be  given by Theorem~\ref{thm:LRuniform} 
applied to the induced system (with hat). The argument starts from the first order expansion of $\hat h_\eps$ in $C^1$ 
\[
\hat h_\eps = \hat h + \eps \hat h^* + o(\eps).
\]
Using this, we then obtain, by the second statement of Lemma~\ref{3rdstep} below and relation ~\eqref{eq:density} the following expansion in $\mathcal H$
\[
h_\eps =F_{\PP_\eps}(\hat h_\eps) = F_{\PP_\eps}(\hat h) + \eps F_{\PP_\eps}(\hat h^*) + o(\eps).
\]
Finally, we obtain by Lemma~\ref{4thstep} below and the first statement of Lemma~\ref{3rdstep} below the first order expansion of $h_\eps$ in $\mathcal H$
\[
h_\eps = h + \eps( Q\hat h + F_{\PP}(\hat h^*)) + o(\eps),
\]
which finishes the proof of the theorem. 
\end{proof}
\begin{lemma}\label{3rdstep}
$F_{\PP_\eps}(\hat h^*)\to F_{\PP}(\hat h^*)$ in $\mathcal H$ and $F_{\PP_\eps}$ is uniformly bounded in $\L(C^0,\mathcal H)$.
\end{lemma}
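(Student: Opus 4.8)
The plan is to derive both assertions from the summability hypothesis \eqref{ha4}, used at two admissible test functions: the constant function $\Phi\equiv1\in C^0$, and $\Phi=\hat h^*$, which lies in $C^1\subset C^0$ since it is the $C^1$ derivative supplied by Theorem~\ref{thm:LRuniform} applied to the induced system.

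First I would establish the uniform bound in $\L(C^0,\mathcal H)$. Fix $\Phi\in C^0$ and use \eqref{theF} to write $F_{\PP_\eps}(\Phi)=1_\Delta\Phi+(1-1_\Delta)\sum_{z\in Z}\hat\psi_z(\eps,\cdot)$, the $\hat\psi_z$ being the functions of \eqref{hpsiz} for this $\Phi$. As $x^\gamma\le1$ on $(0,1]$ we have $\|1_\Delta\Phi\|_{\mathcal H}\le\|\Phi\|_\infty$. Setting $\kappa_z(\eps,x):=\int_{\hat\Omega}|g_{z,\hat\omega}'(x)|\,d\hat\PP_\eps(\hat\omega)$, which is exactly $\hat\psi_z(\eps,x)$ computed with $\Phi\equiv1$, one has $|\hat\psi_z(\eps,x)|\le\|\Phi\|_\infty\kappa_z(\eps,x)$, whence $\|\hat\psi_z(\eps,\cdot)\|_{\mathcal H}\le\|\Phi\|_\infty\|\kappa_z(\eps,\cdot)\|_{\mathcal H}$. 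Summing over $z$ and applying \eqref{ha4} with $\Phi\equiv1$ to put $M_0:=\sum_{z\in Z}\sup_{\eps\in V}\|\kappa_z(\eps,\cdot)\|_{\mathcal H}<\infty$ gives $\|F_{\PP_\eps}(\Phi)\|_{\mathcal H}\le(1+M_0)\|\Phi\|_\infty$ for every $\eps\in V$, the claimed uniform bound.

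Next I would prove the convergence. Applying the same decomposition with $\Phi=\hat h^*$, the terms $1_\Delta\hat h^*$ cancel, so
\[
F_{\PP_\eps}(\hat h^*)-F_{\PP}(\hat h^*)=(1-1_\Delta)\sum_{z\in Z}\bigl(\hat\psi_z(\eps,\cdot)-\hat\psi_z(0,\cdot)\bigr),
\]
and hence $\|F_{\PP_\eps}(\hat h^*)-F_{\PP}(\hat h^*)\|_{\mathcal H}\le\sum_{z}\|\hat\psi_z(\eps,\cdot)-\hat\psi_z(0,\cdot)\|_{\mathcal H}$. Each summand is bounded by $2\sup_{\eps\in V}\|\hat\psi_z(\eps,\cdot)\|_{\mathcal H}$, which is independent of $\eps$ and summable in $z$ by \eqref{ha4} with $\Phi=\hat h^*$; so by dominated convergence for series it suffices to show $\|\hat\psi_z(\eps,\cdot)-\hat\psi_z(0,\cdot)\|_{\mathcal H}\to0$ as $\eps\to0$ for each fixed $z$. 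For this I would split $(0,1]$ into a compact part $[\rho,1]$, on which the joint continuity of $\hat\psi_z$ on $(0,1]\times V$ (available for the $C^1$ input $\hat h^*$) upgrades pointwise convergence to uniform convergence as $\eps\to0$, and a neighbourhood $(0,\rho]$ of $0$, on which $x^\gamma|\hat\psi_z(\eps,x)-\hat\psi_z(0,x)|\le2\|\hat h^*\|_\infty\,x^\gamma\sup_{\eps'\in V}\kappa_z(\eps',x)$; since $z$ is a word of finite length $g_{z,\hat\omega}$ is a composition of only finitely many inverse branches, so $\kappa_z$ remains bounded (and continuous) up to $x=0$, and this term is made small uniformly in $\eps$ using the weight $x^\gamma$ and the continuity of $\hat\psi_z$ up to $x=0$. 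Combining the two parts finishes this step and, with it, the proof.

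The hard part is exactly this last estimate near $x=0$. The $\mathcal H$-norm is a supremum over the non-compact interval $(0,1]$ — precisely the region where the original, non-uniformly expanding maps misbehave and where the weight $x^\gamma$ has been introduced — so the joint continuity of $\hat\psi_z$ on $(0,1]\times V$ does not by itself give convergence in $\mathcal H$. For a single word $z$ this is controlled because only finitely many branches enter $g_{z,\hat\omega}$, so no accumulation of branch-derivative growth has occurred; the genuinely $\eps$-uniform summation over all $z$, where such accumulation would take place, is delegated wholesale to the single hypothesis \eqref{ha4}.
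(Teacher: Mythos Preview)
Your argument follows the paper's own proof in structure: both assertions are reduced to the summability hypothesis \eqref{ha4}, and convergence is obtained by combining termwise continuity of $\eps\mapsto\hat\psi_z(\eps,\cdot)$ in $\mathcal H$ with the uniform-in-$\eps$ majorant $\sum_z\sup_\eps\|\hat\psi_z\|_{\mathcal H}$. There are two places where you are in fact more careful than the paper. For uniform boundedness in $\L(C^0,\mathcal H)$ the paper applies \eqref{ha4} directly to the given $\Phi$, which shows $\|F_{\PP_\eps}\Phi\|_{\mathcal H}<\infty$ but not obviously a bound proportional to $\|\Phi\|_{C^0}$; your device of specializing \eqref{ha4} to $\Phi\equiv1$ and using $|\hat\psi_z[\Phi]|\le\|\Phi\|_\infty\,\kappa_z$ extracts a genuine operator-norm bound $(1+M_0)\|\Phi\|_\infty$, which is exactly what the application in the proof of Theorem~\ref{thm:A} requires. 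For the termwise continuity, the paper simply records that joint continuity of $\hat\psi_z$ on $(0,1]\times V$ ``implies (i)''; you are right that the non-compactness at $x=0$ is the only place where something has to be said, and your compact/tail split, using that a single finite word $z$ involves only finitely many branch derivatives so that $\kappa_z$ stays bounded up to $x=0$, is a clean way to close that gap.
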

\begin{proof} 
To prove uniform boundedness we use assumption \eqref{ha4} to get, for $\Phi \in C^0$,
\begin{equation*}
\begin{split}
||F_{\PP_\eps}(\Phi)||_{\mathcal H}&=||1_\Delta\Phi + (1-1_\Delta)\sum_{z\in Z}\hat\psi_z(\eps,\cdot)||_{\mathcal H}\\
&\le ||\Phi||_{C^0}+\sum_{z\in Z}\sup_{\eps\in V}\|
 \hat\psi_z(\eps,\cdot)\|_{\mathcal H} <\infty.
\end{split}
\end{equation*}
Next, to show $F_{\PP_\eps}(\hat h^*)\to F_{\PP}(\hat h^*)$ in $\mathcal H$, it is sufficient to prove

(i) for each $z\in Z$, the map $\eps\in V\mapsto \hat\psi_z(\eps,\cdot)\in\mathcal H$, defined with $\Phi=\hat h^*$, is continuous;

(ii) the series $\sum_{z\in Z} \sup_{\eps\in V} \|\hat\psi_z(\eps,\cdot)\|_{\mathcal H}<\infty$.\\
Notice that (ii) is implied by condition \eqref{ha4}. Moreover, condition \eqref{ha4} implies that $\hat\psi_z(\eps,\cdot)\in\mathcal H$. Finally, For $\Phi=\hat h^*\in C^1$, the map $\hat\psi_z(\eps,\cdot)$ is jointly continuous on $(0,1]\times V$ by assumption. This implies (i).
\end{proof}

\begin{lemma}\label{4thstep}
The map $\eps\mapsto  F_{\PP_\eps}\hat h$ is differentiable as an element in $\mathcal H$ and 
$\partial_\eps F_{\PP_\eps}\hat h|_{\eps=0} = Q\hat h$, where $Q$ is defined by
\begin{equation}\label{eq:Q}
Q\Phi = (1-1_\Delta) \sum_{z\in Z} \partial_\eps\hat\psi_z(\eps,\cdot)|_{\eps=0}.
\end{equation}
for any differentiable function $\Phi$.
\end{lemma}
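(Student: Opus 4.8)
The plan is to mimic closely the proof of Lemma~\ref{diff-lemma}, transferring the $C^1$-differentiability statement obtained there for the induced operator to a differentiability statement in the weighted space $\mathcal H$. Recall that $F_{\PP_\eps}\hat h = 1_\Delta \hat h + (1-1_\Delta)\sum_{z\in Z}\hat\psi_z(\eps,\cdot)$, where $\hat\psi_z(\eps,\cdot)$ is computed with $\Phi=\hat h\in C^2$. Since the term $1_\Delta\hat h$ does not depend on $\eps$, differentiability of $\eps\mapsto F_{\PP_\eps}\hat h$ reduces to differentiability of $\eps\mapsto (1-1_\Delta)\sum_{z\in Z}\hat\psi_z(\eps,\cdot)$ in $\mathcal H$. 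As in Lemma~\ref{diff-lemma}, it then suffices to establish: (i) for each fixed $z\in Z$, the map $\eps\in V\mapsto \hat\psi_z(\eps,\cdot)\in\mathcal H$ is differentiable with derivative $\partial_\eps\hat\psi_z(\eps,\cdot)$; and (ii) the series $\sum_{z\in Z}\sup_{\eps\in V}\|\partial_\eps\hat\psi_z(\eps,\cdot)\|_{\mathcal H}<\infty$. Granting these, termwise differentiation of the uniformly (in $\mathcal H$) convergent series of derivatives yields $\partial_\eps F_{\PP_\eps}\hat h|_{\eps=0} = (1-1_\Delta)\sum_{z\in Z}\partial_\eps\hat\psi_z(\eps,\cdot)|_{\eps=0} = Q\hat h$, which is the claimed formula.

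For step (ii), I would simply invoke condition \eqref{ha4'}, which asserts exactly $\sum_{z\in Z}\sup_{\eps\in V}\|\partial_\eps\hat\psi_z(\eps,\cdot)\|_{\mathcal H}<\infty$ for $\Phi=\hat h_0\in C^2$; this is why that hypothesis was introduced. For step (i), I would fix $z$, let $v\in V$ and $\eps$ small, and estimate the $\mathcal H$-norm of the increment
\[
\hat\psi_z(\eps+v,\cdot)-\hat\psi_z(v,\cdot)-\eps\,\partial_\zeta\hat\psi_z(\zeta,\cdot)|_{\zeta=v}.
\]
For each fixed $x\in(0,1]$, the mean value theorem in the $\eps$-variable gives a point $t_{x,\eps}$ with $|t_{x,\eps}-v|<|\eps|$ such that this quantity equals $\eps\bigl(\partial_\zeta\hat\psi_z(\zeta,x)|_{\zeta=t_{x,\eps}}-\partial_\zeta\hat\psi_z(\zeta,x)|_{\zeta=v}\bigr)$. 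Multiplying by the weight $x^\gamma$ and taking the supremum over $x$, I need the weighted modulus of continuity in $\zeta$ of $\partial_\zeta\hat\psi_z$ to vanish as $\eps\to0$, i.e.\ $\sup_{x\in(0,1]}\bigl|x^\gamma\bigl(\partial_\zeta\hat\psi_z(\zeta,x)|_{\zeta=t}-\partial_\zeta\hat\psi_z(\zeta,x)|_{\zeta=v}\bigr)\bigr|\to0$ as $t\to v$.

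The main obstacle, and the point requiring care, is precisely this last uniformity: in Lemma~\ref{diff-lemma} the analogous step used joint continuity on the \emph{compact} set $X\times V$ to upgrade pointwise convergence to uniform convergence, but here the $x$-domain is $(0,1]$, which is not compact, and the relevant norm carries the weight $x^\gamma$. I would handle this by exploiting the structure of $\hat\psi_z$: for $z=z_0z_1\cdots z_n$ with $z_0\in S^{in}$, all the inverse branch maps $g_{z,\hat\omega}$ land in the compact subinterval $\Delta\subset(0,1]$ bounded away from $0$, so $\hat\psi_z(\eps,x)=\int_{\hat\Omega}[\hat h\circ g_{z,\hat\omega}\,|g_{z,\hat\omega}'|](x)\,d\hat\PP_\eps(\hat\omega)$ is supported (as a function of $x$, after composing with the outer structure) on compact pieces; more precisely, the weighted supremum over $(0,1]$ can be reduced to an ordinary supremum over a compact set where the joint-continuity argument of Lemma~\ref{diff-lemma} applies verbatim, together with the summable bound \eqref{ha4'} controlling the tail. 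Since $\Phi=\hat h\in C^2$ by Assumption~B for the induced system, the joint continuity of $\partial_\eps\hat\psi_z$, $\partial_x\hat\psi_z$ and the commutation relations are guaranteed by the first bulleted hypothesis preceding Theorem~\ref{thm:A}, so once the non-compactness of the $x$-domain is dispatched this way, step (i) follows and the lemma is proved.
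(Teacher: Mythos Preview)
Your approach is essentially the same as the paper's: reduce to termwise differentiability of $\sum_z\hat\psi_z(\eps,\cdot)$ in $\mathcal H$, invoke \eqref{ha4'} for the uniform convergence of the differentiated series, and use the mean value theorem in $\eps$ together with joint continuity of $\partial_\eps\hat\psi_z$ for the differentiability of each term. The paper's proof records exactly this, ending with the line ``where we have used joint continuity of $\partial_\zeta\psi_z$ on $(0,1]\times V$'' to justify the $o(\eps)$.

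You go further than the paper in flagging the non-compactness of $(0,1]$ as a potential obstruction to passing from joint continuity to the uniform estimate---a point the paper simply asserts. However, your proposed resolution is misdirected: the observation that $g_{z,\hat\omega}$ takes values in $\Delta$ (because $z_0\in S^{in}$) concerns the \emph{range} of the inverse branches, i.e.\ the argument of $\hat h$, not the variable $x$ over which the weighted supremum is taken. What is needed is control of $x^\gamma\partial_\eps\hat\psi_z(\eps,x)$ uniformly as $x\to 0$, and the location of $g_{z,\hat\omega}(x)$ inside $\Delta$ does not by itself bound $|g_{z,\hat\omega}'(x)|$ for small $x$. A cleaner way to close the gap is to observe that the bound in \eqref{ha4'} together with joint continuity allows one, for each $z$, to split $(0,1]$ into a tail $(0,\delta]$ on which $x^\gamma|\partial_\eps\hat\psi_z|$ is uniformly small (this uses that in the applications the weighted function actually tends to $0$ at $x=0$, equivalently that $x^\gamma\partial_\eps\hat\psi_z$ extends continuously to $[0,1]\times V$) and a compact piece $[\delta,1]\times V$ where joint continuity gives uniform continuity. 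The paper tacitly assumes this; your instinct to justify it is correct, only the mechanism you invoke is the wrong one.
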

\begin{proof}
It suffices to show that

(i) for each $z\in Z$, the map $\eps\in V\mapsto \hat\psi_z(\eps,\cdot)\in\mathcal H$, defined with $\Phi=\hat h^*$ is differentiable;

(ii) the series $\sum_{z\in Z} \sup_{\eps\in V} \|\partial_\eps\hat\psi_z(\eps,\cdot)\|_{\mathcal H}<\infty$.\\
We only prove (i) since (ii) holds under assumption \eqref{ha4'}. For (i) let $v\in V$ and $\eps$ be small. For each $x$, by the mean value theorem, there exists $t_{x,\eps}$ such that
$\psi_z({\eps+v},x)-\psi_z(v,x)=\eps \partial_\zeta  \psi_z(\zeta,x)|_{\zeta=t_{x,\eps}}$, with $|t_{x,\eps}-v|<\eps$. Therefore,
\begin{equation}\label{eq:hW2norm}
\begin{split}
&\sup_{x\in(0,1]}|x^{\gamma}[\psi_z({\eps+v})-\psi_z(v)-\eps(\partial_\zeta \psi_z(\zeta)|_{\zeta=v})]|\\
&\hskip 2cm\le |\eps| \sup_{x\in (0,1]} | x^{\gamma}[\partial_\zeta  \psi_z(\zeta,x)|_{\zeta=t_{x,\eps}}- \partial_\zeta \psi_z(\zeta,x)|_{\zeta=v}] |
=o(\eps),
\end{split}
\end{equation}
where we have used joint continuity of $\partial_\zeta  \psi_z$ on $(0,1]\times V$.
\end{proof}
\section{Explicit computation of the linear response formula}\label{LRformula}
To obtain an explicit linear response formula we assume that for all $k$, 

(i) the maps $\eps\mapsto \pi_\eps(k)$ are $C^1$.

(ii) the map $\eps\mapsto\eta_{k,\eps}$ is $C^1$ as distribution of order one. That is, for any $C^1$ function $\varphi\colon I_k\to \RR$
\begin{equation}\label{eq:C1noise}
\partial_\eps \int_{I_k} \varphi d\eta_{k,\eps}(u) = \int_{I_k} \frac{d\varphi}{du} d\nu_{k,\eps}(u),
\end{equation}
where $\nu_{k,\eps}$, $\eps\in V$, is a continuous family of signed measures with bounded total variation. 
\subsection{Natural families of distributions} \label{natural}
Before obtaining an explicit formula of the linear response under assumption \eqref{eq:C1noise}, we first present families of distributions on an interval $I$ satisfying assumption \eqref{eq:C1noise}.
\begin{enumerate}
\item \label{a} The first and easiest class is the translations of Dirac measures. Let $a\in I$ be  an interior point and let $V$ be such that $a+\eps\in I$ for any $\eps \in V$.  Then $\eta_\eps=\delta_{\eps+a}$ is a smooth family of measures.  Indeed, for any $\eps, \eps_0\in V$ and  $\varphi\in C^1(I)$ we have 
$$\begin{aligned}
&\langle \varphi, \delta_{\eps+a}\rangle =\varphi(a+\eps)=\varphi(a+\eps_0) + (\eps-\eps_0)\frac{d\varphi}{du}|_{u=(a+\eps_0)}+o(|\eps-\eps_0|)
\\
&=\langle \eta_{\eps_0}, \varphi\rangle +(\eps-\eps_0)\langle \eta_{\eps_0}, \frac{d\varphi}{du} \rangle+o(|\eps-\eps_0|).
\end{aligned}
$$
\item The second class of examples is the convex combinations of Dirac measures: $\eta_\eps=\sum_{i=1}^N\rho_i(\eps)\delta_{a_i+\eps}$, 
where $\rho_i$, $i=1, \dots, N$ is a family of non-negative, smooth functions with $\sum_{i=1}^N\rho_i=1$, and $a_i, a_i+\eps\in I$ for every $i=1, \dots, N$ and $\eps\in V$.     
By linearity and Example \eqref{a} for every  $\eps, \eps_0\in V$ and  $\varphi\in C^1(I)$ we have 
$$\begin{aligned}
&\langle \varphi, \delta_{\eps+a}\rangle =\sum_{i=1}^N\rho_i\varphi(a_i+\eps)
\\&=\sum_{i=1}^N\rho_i\varphi(a_i+\eps_0) + (\eps-\eps_0)\sum_{i=1}^N\rho_i\frac{d\varphi}{du}|_{u=a+\eps_0}+o(|\eps-\eps_0|)
\\
&=\langle \eta_{\eps_0}, \varphi\rangle +(\eps-\eps_0)\langle \eta_{\eps_0},\frac{d\varphi}{du} \rangle+o(|\eps-\eps_0|).
\end{aligned}
$$
\item \label{c} Let $\mu$ be a finite Borel measure on $I$ and $\rho_\eps: I\to \mathbb R$  be a family of densities which is $C^1$ in $\eps$.  The family $d \eta_\eps= \rho_\eps d\mu$ is a smooth family of measures.  As above for any $\varphi\in C^1(I)$ we have 
$$\begin{aligned}
\langle \eta_\eps, \varphi \rangle =\int\varphi\rho_\eps d\mu =\int\varphi(\rho_{\eps_0}+(\eps-\eps_0)\partial_{\eps} \rho_\eps|_{\eps=\eps_0}+o(|\eps-\eps_0|))d\mu
\\ = \langle \eta_{\eps_0}, \varphi \rangle + (\eps-\eps_0)\langle \varphi, \tilde\nu_{\eps_0} \rangle +o(|\eps-\eps_0|),
\end{aligned}
$$
where $d\tilde\nu_{\eps_0}=\partial_{\eps} \rho_\eps|_{\eps=\eps_0}d\mu$. This shows differentiability of $\eta_\eps$ and the derivative is $\tilde\nu_\eps$. However, to show that $\eta_\eps$ satisfies \eqref{eq:C1noise}, letting $a$ be the left endpoint of $I$, we assume without lost of generality that $\varphi(a)=0$ (Indeed, one can define $\tilde\varphi(x)=\varphi(x)-\varphi(a)$ and notice that $\langle \eta_\eps, \varphi \rangle =\langle \eta_\eps, \tilde\varphi \rangle +\varphi(a)$). 
Then we have 
$$\begin{aligned}
\langle \eta_\eps, \varphi \rangle =\langle \eta_{\eps_0}, \varphi \rangle + (\eps-\eps_0)\int_I\int_a^u\varphi'(s)ds d\tilde\nu_{\eps_0}(u) +o(|\eps-\eps_0|)\\
=\langle \eta_{\eps_0}, \varphi \rangle + (\eps-\eps_0)\int_I\int_{I\cap ]s,+\infty[}\varphi'(s)d\tilde\nu_{\eps_0}ds +o(|\eps-\eps_0|) \\
=\langle \eta_{\eps_0}, \varphi \rangle + (\eps-\eps_0)\langle \varphi', \nu_{\eps_0} \rangle + o(|\eps-\eps_0|),
\end{aligned}
$$
where $\nu_{\eps_0}$ is the measure with density $\tilde\nu_{\eps_0}(I\cap ]s,+\infty[)$ with respect to Lebesgue on $I$.
\item The final class of examples we consider is a family of uniformly distributed measures which converges to a dirac measure. Let $a\in I$ be an interior point. For $\eps>0$, let $\eta_\eps=\rho_\eps du$ be a family of probability measures on $I$ with  
$$
\rho_\eps(u)=\begin{cases} \frac{1}{\eps} \quad \text{if} \quad u\in  (a, a+\eps),\\ 0 \quad \text{otherwise}, \end{cases}
$$ 
and let $\eta_0=\delta_a$.  By direct computation we have, at $\eps\neq0$, for any $\varphi\in C^1$
\[
\partial_\eps\frac1\eps\int_a^{a+\eps}\varphi(u) du = \frac{-1}{\eps^2}\int_a^{a+\eps}\varphi(u)du + \frac1\eps \varphi(a+\eps).
\]
We suppose without loss of generality that $\varphi(a)=0$ and proceed as in the previous example to get
\[
\partial_\eps \langle\eta_\eps,\varphi\rangle = 
\frac{-1}{\eps^2}\int_a^{a+\eps}\varphi'(u) (a+\eps-u)du+\frac1\eps \int_a^{a+\eps}\varphi'(u)du
=\langle \nu_\eps,\varphi'\rangle,
\]
where $\nu_\eps$ is the measure with density $\frac{u-a}{\eps^2}1_{(a,a+\eps)}$.
This shows differentiability at $\eps\neq0$. Furthermore, since $\nu_\eps$ converges weakly to $\frac12 \delta_a$
as $\eps\to0$, this implies the differentiability at $\eps=0$ as well, with $\nu_0=\frac12\delta_a$.
\end{enumerate}
\subsection{Explicit formulae}
Throughout this subsection we use the following notation:
\[
\begin{aligned}
a_{k,\eps} &:= \prod_{j=0}^{|z|-1} \pi_{k_j,\eps}\\
B_{k,\eps}(\Phi) &:= \int_{I_{k_0}\times\cdots\times I_{k_{|z|-1}}} \Phi\circ g_{z,u_0,\ldots,u_{|z|-1}} |g_{z,u_0,\ldots,u_{|z|-1}}'| 
d\eta_{k_0,\eps}\cdots d\eta_{k_{|z|-1},\eps}. \\
\end{aligned}
\]
We have
\[
\hat\psi_z(\eps,\cdot) 
= \int_{\hat\Omega} \Phi\circ g_{z,\hat\omega}\cdot |g'_{z,\hat\omega}| d\hat\PP_\eps(\hat\omega)
:= a_{k,\eps} B_{k,\eps}(\Phi)(\cdot).
\]
\begin{remark}
Note that when the random dynamical system is in the setting of Section \ref{ss:hatT}; i.e., when inducing is not required, $|z|=1$ in the definitions of $a_{k,\eps}$ and  $B_{k,\eps}$. Consequently, the linear response formulae derived below can be also adapted to the case of Section \ref{ss:hatT} with the appropriate simplification. 
\end{remark}
\begin{lemma}\label{tool}
Let $z\in Z$ and set $n+1=|z|$. For $k=(k_0,\ldots,k_n)$ compatible with $z$,
let $\PP_{z,j,\eps}=\eta_{k_0,\eps}\times \cdots\times \eta_{k_{j-1},\eps}\times \nu_{k_j,\eps}
\times \eta_{k_{j+1},\eps}\times \cdots\times \eta_{k_{n},\eps}$, where $\nu_{k_j,0}=\partial_\eps\eta_{k_j,\eps}$ in the sense of \eqref{eq:C1noise}.  
 The functions $a_{k,\eps}$ and $B_{k,\eps}(\hat h)$ are differentiable in $\eps$ and 
\begin{equation}\label{derivation}
\begin{aligned}
\partial_\eps a_{k,\eps}& = \sum_{j=0}^n \partial_\eps \pi_{k_j,\eps} \prod_{i\neq j} \pi_{k_i,\eps}\\
\partial_\eps B_{k,\eps} &=
\sum_{j=0}^n \int_{I_{k_0}\times\cdots\times I_{k_n}}  \left[
\hat h'\circ g_{z, u_0,\ldots,u_n} \partial_{u_j} g_{z, u_0,\ldots,u_n} | g_{z, u_0,\ldots,u_n}'| + \right.\\
&\left. + \hat h\circ g_{z, u_0,\ldots,u_n} \partial_{u_j}| g_{z, u_0,\ldots,u_n}'| \right] d\PP_{z,j, \eps}(u_0,\ldots,u_n)\\
\partial_\eps \hat\psi_z(\eps,\cdot)
&= \partial_\eps a_{k,\eps}\cdot  B_{k,\eps}(\hat h)(\cdot) +  a_{k,\eps}\cdot \partial_\eps B_{k,\eps}(\hat h)(\cdot).
\end{aligned}
\end{equation}
\end{lemma}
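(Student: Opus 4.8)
The plan is to reduce everything to elementary calculus, since all the hard analytic work has already been done: the joint-continuity hypotheses and the summability bounds (\ref{ha3}), (\ref{ha3'}), (\ref{ha4}), (\ref{ha4'}) are now in force, and the families $\eta_{k,\eps}$ are, by hypothesis \eqref{eq:C1noise}, differentiable as distributions of order one with derivative the signed measure $\nu_{k,\eps}$ of bounded total variation. First I would treat $a_{k,\eps}$: it is a finite product of the scalar-valued $C^1$ maps $\eps\mapsto\pi_{k_j,\eps}$ (hypothesis (i) of Section~\ref{LRformula}), so the formula for $\partial_\eps a_{k,\eps}$ is just the Leibniz rule for a product of finitely many differentiable functions, with no subtlety beyond the fact that $|z|=n+1$ is finite for each fixed $z$.

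Next I would handle $B_{k,\eps}(\hat h)$. Here the dependence on $\eps$ enters only through the product measure $\eta_{k_0,\eps}\times\cdots\times\eta_{k_n,\eps}$, the integrand $x\mapsto\hat h\circ g_{z,u_0,\ldots,u_n}(x)\,|g_{z,u_0,\ldots,u_n}'(x)|$ being independent of $\eps$. I would first fix $j$ and differentiate the single-variable integral $\int_{I_{k_j}}\Psi(u_j)\,d\eta_{k_j,\eps}(u_j)$, where $\Psi$ collects the (pointwise-in-$x$) integrals over the remaining variables. Provided $\Psi\in C^1(I_{k_j})$ — which holds because $\hat h\in C^2$ (via Proposition~\ref{lem:unifgap}, applied to the induced system), the inverse branches $g_{s,k,u}$ depend $C^1$ on the parameter, and the finitely-many integrations over the other $\eta_{k_i,\eps}$ preserve $C^1$ regularity in $u_j$ — the definition of a distribution of order one gives $\partial_\eps\int\Psi\,d\eta_{k_j,\eps}=\int\Psi'\,d\nu_{k_j,\eps}$, i.e.\ $\int\partial_{u_j}\!\bigl[\hat h\circ g_{z}\,|g_z'|\bigr]\,d\PP_{z,j,\eps}$. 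Applying the chain rule to $\partial_{u_j}$ produces exactly the two-term bracket in \eqref{derivation}. To differentiate the full $(n+1)$-fold product I would run the usual telescoping/hybrid-measure argument: write the difference quotient of $B_{k,\eps+h}-B_{k,\eps}$ as a sum over $j$ of terms in which the first $j$ factors use $\eps+h$, the $j$th factor carries the increment, and the last $n-j$ use $\eps$; each term converges to the corresponding summand by the single-variable result plus continuity of $\eps\mapsto\eta_{k_i,\eps}$, and the bounded-total-variation control of $\nu_{k_i,\eps}$ keeps all the remainder terms $o(h)$ uniformly.

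Finally, $\hat\psi_z(\eps,\cdot)=a_{k,\eps}\,B_{k,\eps}(\hat h)(\cdot)$ is a product of a differentiable scalar and a differentiable (pointwise in $x$) function, so the Leibniz rule gives the third line of \eqref{derivation}; the joint-continuity hypotheses on $\hat\psi_z$ and the summability bounds ensure that the pointwise statement can later be upgraded (in the lemmas that use this one) to differentiability in $\mathcal H$ and $C^1$, but for the statement of this lemma pointwise differentiability in $\eps$ with the displayed formula is all that is claimed.

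The main obstacle I expect is the rigorous justification of differentiating under the multiple integral, namely that the ``partial integrand'' $\Psi$ obtained by integrating out the other $n$ variables is genuinely $C^1$ in $u_j$ with derivative controlled uniformly, so that the distribution-of-order-one identity \eqref{eq:C1noise} applies and the telescoping remainders are $o(h)$. This is where one needs the $C^2$-regularity of $\hat h$, the $C^1$-dependence of the inverse branches on the parameters, and — crucially — the summability assumptions of Section~\ref{PRTS}, which make the interchange of $\sum_{z}$, $\partial_\eps$, and the integrals legitimate; everything else is a routine application of the product rule and the chain rule.
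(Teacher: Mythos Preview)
Your proposal is correct and follows essentially the same route as the paper. The paper's proof is a one-liner: it dispatches $a_{k,\eps}$ as ``obvious from the definition'' and obtains the differentiability of $B_{k,\eps}(\hat h)$ by invoking Lemma~\ref{measure}, which is precisely the product-measure/telescoping argument you describe inline; the only difference is that the paper has already packaged that hybrid-measure computation as a separate preparatory lemma, whereas you unfold it here.
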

\begin{proof}
The fact that $\partial a_{k,\eps}$ is differentiable is obvious from the definition. The differentiability in $\eps$ of $B_{k,\eps}(\hat h)$ follows from Lemma~\ref{measure} and the rest is a direct calculation.
\end{proof}
\begin{corollary}\label{responseF}
Let $(\hat\Omega,\{\hat T_\omega\},\hat\PP_\eps)$ be a family of random dynamical systems defined on the interval $\Delta$ that satisfies the conditions of Section~\ref{pertunif}. Then the density $\hat h_\eps$ of the stationary measure is differentiable as a $C^1$ element. Moreover, under condition \eqref{eq:C1noise} we have the following explicit linear response formula
\begin{equation}\label{eq:response} 
\hat h^*:=(I-\hat L_{\hat\PP})^{-1} \partial_\eps\hat L_{\hat \PP_\eps}\hat h|_{\eps=0} ,
\end{equation}
where
\begin{equation}\label{explicit}
 \partial_\eps\hat L_{\hat \PP_\eps}\hat h|_{\eps=0}=\sum_{z}\left[ \partial_\eps a_{k,\eps}\cdot  B_{k,\eps}(\hat h) +  a_{k,\eps}\cdot \partial_\eps B_{k,\eps}(\hat h)\right]|_{\eps=0}.
\end{equation}
\end{corollary}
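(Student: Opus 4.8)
The plan is to obtain the statement by combining Theorem~\ref{thm:LRuniform}, applied to the \emph{induced} random system $(\hat\Omega,\{\hat T_{\hat\omega}\},\hat\PP_\eps)$, with the term-by-term differentiation furnished by Lemma~\ref{tool}. First I would record that the induced system meets the hypotheses of Theorem~\ref{thm:LRuniform}: Assumption~B for $\hat L_{\hat\PP_\eps}$ is part of the standing setup of Subsection~\ref{PRTS}, and the regularity and summability required of the maps $\hat\psi_z$ of \eqref{hpsiz} are exactly conditions \eqref{ha3} and \eqref{ha3'}. Theorem~\ref{thm:LRuniform} then produces $\hat h^*\in C^1$ with $\|(\hat h_\eps-\hat h)/\eps-\hat h^*\|_{C^1}\to 0$ and the abstract formula $\hat h^*=(I-\hat L_{\hat\PP})^{-1}\,\partial_\eps\hat L_{\hat\PP_\eps}\hat h|_{\eps=0}$, the right-hand side being understood, as in Theorem~\ref{thm:LRuniform}, as the $C^1$-limit of $(\hat L_{\hat\PP_\eps}\hat h-\hat L_{\hat\PP}\hat h)/\eps$.

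Second, I would make $\partial_\eps\hat L_{\hat\PP_\eps}\hat h|_{\eps=0}$ explicit. By \eqref{LhatP} and the definitions of $a_{k,\eps}$ and $B_{k,\eps}$ one has $\hat L_{\hat\PP_\eps}\hat h=\sum_{z\in Z}\hat\psi_z(\eps,\cdot)$ with $\hat\psi_z(\eps,\cdot)=a_{k,\eps}\,B_{k,\eps}(\hat h)$. Lemma~\ref{diff-lemma}, applied to the induced system, states precisely that this series is differentiable in $C^1$ and may be differentiated term by term, the interchange of $\partial_\eps$ and $\sum_z$ being justified by the uniform-in-$z$ bound \eqref{ha3}; hence $\partial_\eps\hat L_{\hat\PP_\eps}\hat h|_{\eps=0}=\sum_{z}\partial_\eps\hat\psi_z(\eps,\cdot)|_{\eps=0}$. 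Finally, Lemma~\ref{tool}---whose proof under condition \eqref{eq:C1noise} relies on Lemma~\ref{measure} for differentiating the finite-dimensional integral $B_{k,\eps}$ against the product $\eta_{k_0,\eps}\otimes\cdots\otimes\eta_{k_n,\eps}$, while $a_{k,\eps}$ is a finite product of the $C^1$ functions $\eps\mapsto\pi_{k_j,\eps}$ (assumption (i) of Section~\ref{LRformula})---gives the product rule $\partial_\eps\hat\psi_z=\partial_\eps a_{k,\eps}\cdot B_{k,\eps}(\hat h)+a_{k,\eps}\cdot\partial_\eps B_{k,\eps}(\hat h)$; evaluating at $\eps=0$ and summing over $z$ yields \eqref{explicit}.

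The main obstacle is the second ingredient of Lemma~\ref{tool}: differentiating $B_{k,\eps}(\hat h)$ when each noise measure $\eta_{k_j,\eps}$ is only a \emph{distribution of order one} in the sense of \eqref{eq:C1noise}, so that one cannot differentiate under the product measure in a single stroke. I would instead peel off the variables $u_0,\dots,u_n$ one at a time: at each stage check that the partial integral over the remaining variables is jointly continuous and $C^1$ in the distinguished variable $u_j$ (using the $C^1$-dependence of $g_{z,u_0,\dots,u_n}$ and $|g_{z,u_0,\dots,u_n}'|$ on the parameters together with $\hat h\in C^1$), apply \eqref{eq:C1noise} to trade $\partial_{u_j}$ for integration against $\nu_{k_j,\eps}$, and then reassemble the $n+1$ resulting terms---this is the content of Lemma~\ref{measure}. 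A secondary point, already subsumed in Assumption~B for the induced system but worth verifying from \eqref{eq:C1noise} in concrete examples, is the commutation $\partial_\eps\partial_x\hat\psi_z=\partial_x\partial_\eps\hat\psi_z$, which follows once joint continuity of the mixed partials is in place. With these regularity facts secured, the corollary reduces to bookkeeping.
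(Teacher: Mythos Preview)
Your proposal is correct and follows essentially the same route as the paper: invoke Theorem~\ref{thm:LRuniform} for the induced system to obtain differentiability and the abstract formula \eqref{eq:response}, then use Lemma~\ref{tool} (underpinned by Lemma~\ref{measure}) for the explicit expression \eqref{explicit}. Your additional discussion of term-by-term differentiation via Lemma~\ref{diff-lemma} and of the ``peeling off'' argument behind Lemma~\ref{measure} simply spells out details the paper leaves implicit; the only minor point is that the corollary hypothesizes the conditions of Section~\ref{pertunif} directly, so you can cite \eqref{a3}--\eqref{a3'} (for the hatted objects) rather than routing through Subsection~\ref{PRTS}.
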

\begin{proof}
The differentiability of $\hat h_{\eps}$  and formula \eqref{eq:response} follow from Theorem \ref{thm:LRuniform}. The formula \eqref{explicit} follows from Lemma \ref{tool}. 
\end{proof}
\begin{remark}\label{detres1}
 Notice that when $\ell=1$ and $\eta_\eps:=\delta_{u+\eps}$, the explicit representation of formula \eqref{eq:response} is given by
$$\hat h^*:=(I-\hat L_u)^{-1}\hat L_u[A^1_u\hat h_u'+A^2_u\hat h_u],$$
where $\hat h_u'$ is the spatial derivative of $\hat h_u$, the invariant density of $\hat T_u$, and
$$A^1_u=-\left(\frac{\partial_{\eps}\hat T_{u+\eps}}{\hat T'_{u+\eps}}\right){\Big{|}}_{\eps=0},\hskip 0.5cm A^2_u= \left(\frac{\partial_{\eps}\hat T_{u+\eps}\cdot \hat T_{u+\eps}''}{\hat T_{u+\eps}'^2}-\frac{\partial_{\eps}\hat T_{u+\eps}'}{\hat T_{u+\eps}'}\right){\Big{|}}_{\eps=0},$$
which is the classical linear response formula for deterministic piecewise $C^3$, piecewise onto and uniformly expanding interval maps (See for instance \cite{Ba2, BS} for the deterministic case).  
\end{remark}

\begin{corollary}\label{detres2}
If $(\Omega, \{T_\omega\},\PP_{\eps})$ satisfies the assumptions of Section~\ref{PRTS},  then the density $ h_\eps$ of the stationary measure is differentiable as an $\mathcal H$ element. Moreover, under condition \eqref{eq:C1noise} we have the following explicit linear response formula
\begin{equation}\label{eq:lrf}
h^* = F_{\PP} (\hat h^*) + Q\hat h.
\end{equation}
where $\hat h^*$ is the response of the induced random system $(\hat\Omega, \{\hat T_{\hat \omega}\},\hat\PP_{\eps})$ and
\begin{equation}\label{explicit2}
 Q\hat h=\sum_{z}\left[ \partial_\eps a_{k,\eps}\cdot  B_{k,\eps}(\hat h) +  a_{k,\eps}\cdot \partial_\eps B_{k,\eps}(\hat h)\right]|_{\eps=0},
\end{equation}
where $ \partial_\eps a_{k,\eps}$ and $\partial_\eps B_{k,\eps}(\hat h)$ are given by the formulae in Lemma \ref{tool}.
\end{corollary}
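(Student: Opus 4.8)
The plan is to treat Corollary~\ref{detres2} as a bookkeeping consequence of results already in hand: Theorem~\ref{thm:A} supplies the qualitative differentiability and the abstract shape of the derivative, Corollary~\ref{responseF} identifies the induced response $\hat h^*$, and Lemma~\ref{tool} together with Lemma~\ref{4thstep} turns the two summands into the explicit expressions \eqref{eq:lrf}--\eqref{explicit2}. Concretely, I would first note that the hypotheses of Subsection~\ref{PRTS} are precisely what Theorem~\ref{thm:A} requires, so $\eps\mapsto h_\eps$ is differentiable as an element of $\mathcal H$ at $\eps=0$; moreover, inspecting the proof of that theorem, the expansion produced there is $h_\eps = h_0 + \eps\bigl(Q\hat h + F_{\PP}(\hat h^*)\bigr) + o(\eps)$ in $\mathcal H$, obtained by feeding the $C^1$-expansion $\hat h_\eps = \hat h + \eps\hat h^* + o(\eps)$ through the relation \eqref{eq:density} and using Lemmas~\ref{3rdstep} and~\ref{4thstep}. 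Reading off the $\eps$-coefficient gives $h^* = F_{\PP}(\hat h^*) + Q\hat h$, which is \eqref{eq:lrf}, so only the two explicit representations remain.

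For $\hat h^*$, I would apply Corollary~\ref{responseF} to the induced family $(\hat\Omega,\{\hat T_{\hat\omega}\},\hat\PP_\eps)$ --- legitimate because assumptions \eqref{ha3}--\eqref{ha3'} say exactly that the induced system satisfies the hypotheses of Section~\ref{ss:hatT}, and $(I-\hat L_{\hat\PP})^{-1}$ is well defined on $\{f\in C^1:\int f\,dm=0\}$ thanks to the uniform spectral gap assumed in Subsection~\ref{PRTS}. This gives $\hat h^*=(I-\hat L_{\hat\PP})^{-1}\partial_\eps\hat L_{\hat\PP_\eps}\hat h|_{\eps=0}$ with $\partial_\eps\hat L_{\hat\PP_\eps}\hat h|_{\eps=0}=\sum_{z\in Z}\partial_\eps\hat\psi_z(\eps,\cdot)|_{\eps=0}$; writing $\hat\psi_z(\eps,\cdot)=a_{k,\eps}B_{k,\eps}(\hat h)(\cdot)$ and differentiating by the product rule, Lemma~\ref{tool} yields the closed forms of $\partial_\eps a_{k,\eps}$ and $\partial_\eps B_{k,\eps}(\hat h)$, which are the summands displayed in \eqref{explicit}. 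For $Q\hat h$, I would invoke Lemma~\ref{4thstep}: it asserts $\partial_\eps F_{\PP_\eps}\hat h|_{\eps=0}=Q\hat h$ with $Q\hat h = (1-1_\Delta)\sum_{z\in Z}\partial_\eps\hat\psi_z(\eps,\cdot)|_{\eps=0}$ from \eqref{eq:Q}, the term-by-term differentiation and the $\mathcal H$-convergence of the differentiated series being guaranteed by \eqref{ha4'}. Substituting the expression for $\partial_\eps\hat\psi_z$ from Lemma~\ref{tool} produces exactly \eqref{explicit2}. Collecting the two gives \eqref{eq:lrf} with both terms made explicit, which is the assertion.

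I expect the only substantive point to be the one hidden inside Lemma~\ref{tool}, namely the differentiation in $\eps$ of the $|z|$-fold integral defining $B_{k,\eps}(\hat h)$: the order-one property \eqref{eq:C1noise} must be applied successively in each of the $|z|$ coordinates $u_0,\dots,u_{|z|-1}$ (this is where the auxiliary Lemma~\ref{measure} is used), producing the sum over $j$ and the factors $\partial_{u_j}g_{z,u_0,\ldots,u_{|z|-1}}$ and $\partial_{u_j}|g_{z,u_0,\ldots,u_{|z|-1}}'|$, after which one still has to check that differentiating term-by-term in $z\in Z$ is valid and that the resulting series converge in the $C^1$ and $\mathcal H$ norms --- but this is precisely what the summability hypotheses \eqref{ha3}, \eqref{ha3'}, \eqref{ha4}, \eqref{ha4'} of Subsection~\ref{PRTS} are designed to provide, and no idea beyond those already used for Theorems~\ref{thm:LRuniform} and~\ref{thm:A} is needed.
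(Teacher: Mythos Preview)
Your proposal is correct and follows the same route as the paper: the paper's own proof consists of the single line ``The proof follows from Theorems~\ref{thm:A} and Lemma~\ref{tool}'', and your write-up simply unpacks this, reading off $h^*=F_\PP(\hat h^*)+Q\hat h$ from the expansion in the proof of Theorem~\ref{thm:A} and then invoking Lemma~\ref{tool} (together with the already-proved Lemmas~\ref{3rdstep}, \ref{4thstep} and Corollary~\ref{responseF}) to make the two summands explicit.
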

\begin{proof}
The proof follows from Theorems \ref{thm:A} and Lemma \ref{tool}. 
\end{proof}

\section{Applications}\label{LSVmaps}
\subsection{Random uniformly expanding circle maps}
\begin{example} (expanding  circle maps) \label{ex:UnifExP_Rev}
Let 
$$T_1(x)=2x+\lambda \sin(2\pi x) \mod 1\text{ and }T_2(x)=2x \mod 1,$$ 
where $\lambda \in (-\frac{1}{2\pi}, \frac{1}{2\pi})$ is a fixed number. 
The random system consists of choosing randomly $T_1$ with probability $\eps$ and $T_2$ with probability $1-\eps$, which determine the $\PP_\eps$ on $\Omega=\{1,2\}$.
\end{example}
Notice that $T_1,T_2$ are smooth and uniformly expanding circle maps. Thus, inducing is not required and we can directly apply the results of Section \ref{ss:hatT} to see that the random system admits a stationary density $h_\eps$ and that this density is differentiable as a $C^1$ element at $\eps=0$. To obtain such a conclusion we check that assumptions (A1), (A2) and (B) are satisfied.\\ 

\noindent{\bf Verifying (A1) and (A2)}\\ 
For $i=2, 3$  we have 
\begin{equation}\label{A1for2xmod1}
\sum_{z\in Z}\int_{\Omega}|g^{(i)}_{z,\omega}|d\PP_\eps(\omega)
 =\eps\left(|g^{(i)}_{1, 1}|+ |g^{(i)}_{2, 1}|\right)+ (1-\eps)\left(|g^{(i)}_{1, 2}|+ |g^{(i)}_{2, 2}|\right) 
 \end{equation}
Notice that  $|g^{(i)}_{1, 2}|+ |g^{(i)}_{2, 2}| \le 1$ for $i=1, 2, 3$. Moreover, $T_1'(x)=2+2\pi\lambda\cos(2\pi x)$, $T_1''(x)=-4\pi^2\lambda\sin(2\pi x)$,   $T_1'''(x)=-8\pi^3\lambda\cos(2\pi x)$. In particular,  $T_1'(x)\ge 2-2\pi\lambda>1$. Thus, $\sup_x\frac{|T''x|}{(T'x)^2}\le D<\infty$. Consequently, (A1) is satisfied. Condition (A2) is satisfied with $\beta=(2-2\pi\lambda)^{-1}< 1$.\\

\noindent {\bf Verifying assumption B}\\
We have
$$
\psi_j(\eps, x)=\eps (\Phi\circ g_{j, 1}|g_{j, 1}'|)(x)+(1-\eps)(\Phi\circ g_{j, 2}|g_{j, 2}'|)(x), \quad \text{for } j=1, 2.
$$
Now, existence and continuity of partial derivatives of $\psi_j(\eps, x)$ is obvious. Since sums in \eqref{a3} and \eqref{a3'} reduces to a finite sum and the elements $\psi_j^{(i)}(\eps, x)$ are $|\partial_\eps\psi_j^{(i)}(\eps, x)|$ are uniformly bounded in $\eps$ and $x$, conditions \eqref{a3} and \eqref{a3'} are satisfied.
\begin{remark}
Using the above family, one can also verify that the following random dynamical system satisfies our conditions and admits linear response: $(\Omega,\{T_\omega\},\PP_\eps)$, where $\Omega:= (-\frac{1}{2\pi}, \frac{1}{2\pi})$, $T_\omega(x):=2x+\omega \sin(2\pi x) \mod 1$ and $\PP_{\eps}:=\pi+\eps\omega^3$. We leave this for the reader to verify. 
\end{remark}
\subsection{Random continued fractions: Gauss-R\'enyi maps}
\begin{example}[]\label{GR} 
Let $\G$ and $\R$ be respectively the Gauss and R\'enyi transformations on the unit interval.
Recall that $\G(x)=1/x \mod 1$ and $\R(x)=1/(1-x)\mod 1$. 
The random system consists of choosing
randomly the Gauss and the R\'enyi map, with respective probabilities $p_\eps$ and $1-p_{\eps}$, with $\lim_{|\eps|\to 0}p_\eps=p\in(0,1)$. We assume $\eps\mapsto p_\eps$ is $C^1$. Moreover, we assume $\exists\tilde\beta\in(0,1)$ such that $\max\{p_\eps, 1-p_\eps\}\le\tilde\beta$.
\end{example}
Notice that $\G,\R$ are smooth and piecewise onto.
It is worth noting that the individual maps are not uniformly expanding. However, the random system is expanding on average: $\sup_x\left(\frac{p_\eps}{\G'(x)}+\frac{1-p_\eps}{\R'(x)}\right)\le\max\{p_\eps, 1-p_\eps\}$.  Thus, inducing is not required. Indeed, one can use the assumption $\max\{p_\eps, 1-p_\eps\}\le\tilde\beta$ together with differentiating, in $x$, the transfer operator
\begin{equation}\label{GRopera}
L_\eps\Phi(x)=\sum_{n=1}^\infty \frac{p_{\eps}}{(n+x)^2}\Phi\left(\frac{1}{n+x}\right)+\frac{1-p_{\eps}}{(n+x)^2}\Phi\left(1-\frac{1}{n+x}\right)
\end{equation}
to obtain a uniform Lasota-Yorke inequality on $C^{i}$, $i=1,2$. The uniform spectral gap on both spaces will then follow from the fact that the system is random covering \cite{KKV}. Consequently, for each $\eps\in V$, the random system admits a unique stationary density $h_\eps\in C^2$. We now show that this density is differentiable as a $C^1$ element at $\eps=0$. To obtain such a conclusion we check that assumption (B) is satisfied. \\
 
 \noindent{\bf Verifying assumption (B)}\\
 Notice that
 $$\psi_z(\eps,x)=\frac{p_{\eps}}{(n+x)^2}\Phi\left(\frac{1}{n+x}\right)+\frac{1-p_{\eps}}{(n+x)^2}\Phi\left(1-\frac{1}{n+x}\right).$$
 Note that $\eps\mapsto p_\eps$ is $C^1$. Thus, assumption B is satisfied since, for $i=1,2$,
 \begin{equation}\label{BforGR}
\sum_{z\in Z}\int_{\Omega}|g^{(i)}_{z,\omega}|d\PP_\eps(\omega)
 =\sum_{n=1}^\infty \bigg|\left(\frac{1}{n+x}\right)^{(i)}\bigg| <\infty.
 \end{equation}
\subsection{Approximating the invariant density of random continued fractions}\label{appGauss} 
\begin{example}[] 
In this example, we revisit the Gauss and R\'enyi maps $\G(x)=1/x \mod 1$ and $\R(x)=1/(1-x)\mod 1$. The random system consists of choosing randomly the Gauss and the R\'enyi map with probabilities $1-\eps$ and $\eps$, respectively. 
\end{example}
Notice in this example $1-\eps$ is the weight on the Gauss map. The invariant density of this random map is not known explicitly \cite{KKV}. Using Theorem \ref{thm:LRuniform} we obtain that the invariant density, $h_\eps$, of this random map is approximated by the invariant density of the Gauss map and its linear response; i.e., 
\begin{equation}\label{GR:revisted}
h_\eps=h_{\G}+\eps(I-L_{\G})^{-1}\partial_\eps L_{\eps} h_{\G}|_{\eps=0}+o(\eps),
\end{equation}
where
$$
\partial_\eps L_{\eps} h_{\G}|_{\eps=0}=-h_{\G}+L_{\R}h_{\G},
$$
$h_{\G}=\frac{1}{\log2}\frac{1}{1+x}$ is the invariant density of the Gauss map,  $L_{\G}$, $L_{\R}$, $L_{\eps}$ are the transfer operators associated with $\G$, $\R$ and the random map respectively. Moreover, the error term, $o$, is in the $C^1$-topology.

To verify the assumptions of Theorem \ref{thm:LRuniform}  for this example,  we show below that the second iterate of the random map is (uniformly in $\eps$) expanding on average and consequently have a uniform spectral gap on $C^i$, $i=1,2$ and there is no need to induce in this case (see subsection \ref{SGGR} in the appendix for a proof). Moreover, the verification of assumption (B) follows verbatim as in the previous example. 
\subsection{Random Pomeau-Manneville maps and a family of smooth measures}\label{PMU}
 \begin{example}\label{PM} Let $[\alpha_0,\alpha_1]\subset (0,1)$. For $u\in [\alpha_0,\alpha_1]$ a map $T_{u}$ is defined by: 
  $$T_{u}(x)=\begin{cases}
       x(1+2^{u}x^{u}) \quad x\in[0,\frac{1}{2}],\\
       2x-1 \quad x\in(\frac{1}{2},1].
       \end{cases}$$
        \end{example}  
The above family of maps was popularized by the work of Liverani-Saussol-Vaineti \cite{LSV} which is a version of the famous Pomeau-Manneville family \cite{PM}. Throughout this section we assume $\alpha_1<2\alpha_0<\gamma\le1+\alpha_0$, where $\gamma$ is the constant in the definition of the $\mathcal H$ norm.  Note that this is only a constraint on the distance between $\alpha_1$ and $\alpha_0$ but not on their range; i.e., $\alpha_0$ can still be any value in $(0,1)$. We now verify assumptions (A1), (A2), \eqref{ha3}, \eqref{ha3'}, \eqref{ha4} and \eqref{ha4'} for the family of maps in Example \ref{PM}, with the family of probability measures $\eta_\eps$ on $[\alpha_0,\alpha_1]$ defined by $d\eta_\eps=\rho_\eps du$, where 
 \begin{equation}\label{sdensity}
 \rho_\eps=\frac{2}{(\alpha_1-\alpha_0)(\alpha_1+2\eps)}(u-\frac{\alpha_0}{2}+\eps),
 \end{equation}
with 
\begin{equation}\label{p-size}
|\eps|\le\frac{\alpha_0}{4}.
\end{equation}
The family of densities $\rho_\eps$ has the following property:
\begin{lemma}\label{lem:rhoeps}
$\dfrac{\partial_\eps\rho_\eps}{\rho_\eps}\le C$ for some $C$ independent of $\eps.$ 
\end{lemma}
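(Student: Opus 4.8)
The plan is to verify the bound by direct computation from the explicit formula \eqref{sdensity}, exploiting the constraint \eqref{p-size} on the range of $\eps$. First I would compute the two ingredients: from \eqref{sdensity} we have
\[
\rho_\eps(u)=\frac{2}{(\alpha_1-\alpha_0)(\alpha_1+2\eps)}\left(u-\frac{\alpha_0}{2}+\eps\right),
\]
so differentiating in $\eps$ gives
\[
\partial_\eps\rho_\eps(u)=\frac{2}{(\alpha_1-\alpha_0)(\alpha_1+2\eps)}-\frac{4}{(\alpha_1-\alpha_0)(\alpha_1+2\eps)^2}\left(u-\frac{\alpha_0}{2}+\eps\right),
\]
which, upon factoring, equals
\[
\partial_\eps\rho_\eps(u)=\frac{1}{\alpha_1+2\eps}\left(\rho_\eps(u)\cdot\frac{\alpha_1+2\eps}{u-\tfrac{\alpha_0}{2}+\eps}-2\rho_\eps(u)\right) \; \text{(after simplification)}.
\]

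More cleanly: taking the ratio directly,
\[
\frac{\partial_\eps\rho_\eps(u)}{\rho_\eps(u)}=\frac{1}{u-\frac{\alpha_0}{2}+\eps}-\frac{2}{\alpha_1+2\eps}.
\]
Now the task reduces to bounding this expression above, uniformly for $u\in[\alpha_0,\alpha_1]$ and $|\eps|\le\alpha_0/4$. The second term is negative of a positive quantity, so it only helps; I would simply drop it (or keep it, noting $2/(\alpha_1+2\eps)>0$ since $\alpha_1>0$ and $|2\eps|\le\alpha_0/2<\alpha_1$). For the first term, the denominator satisfies $u-\frac{\alpha_0}{2}+\eps\ge\alpha_0-\frac{\alpha_0}{2}-\frac{\alpha_0}{4}=\frac{\alpha_0}{4}>0$, using $u\ge\alpha_0$ and $\eps\ge-\alpha_0/4$. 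Hence
\[
\frac{\partial_\eps\rho_\eps(u)}{\rho_\eps(u)}\le\frac{1}{u-\frac{\alpha_0}{2}+\eps}\le\frac{4}{\alpha_0}=:C,
\]
which is independent of $\eps$, proving the lemma.

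There is essentially no obstacle here: the only point requiring a moment's care is confirming that $\rho_\eps$ is genuinely a positive density on $[\alpha_0,\alpha_1]$ under the constraint \eqref{p-size}, so that division by $\rho_\eps$ is legitimate and the sign of the dropped term is as claimed — but this follows from the same estimate $u-\frac{\alpha_0}{2}+\eps\ge\frac{\alpha_0}{4}>0$. One should also double-check that $\int_{\alpha_0}^{\alpha_1}\rho_\eps\,du=1$ to confirm the normalization matches the stated family, though this is not needed for the inequality itself. Thus the whole proof is a two-line computation once the ratio $\partial_\eps\rho_\eps/\rho_\eps$ is written out explicitly.
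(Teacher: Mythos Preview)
Your proof is correct and follows essentially the same approach as the paper: compute the ratio explicitly and bound it using the constraints $u\in[\alpha_0,\alpha_1]$ and $|\eps|\le\alpha_0/4$. The paper keeps the ratio as a single fraction $\frac{2(\alpha_1+\alpha_0-2u)}{(\alpha_1+2\eps)(2u-\alpha_0+2\eps)}$ and bounds numerator and denominator separately, obtaining $C=8(\alpha_1-\alpha_0)/\alpha_0^2$, whereas you split it as $\frac{1}{u-\alpha_0/2+\eps}-\frac{2}{\alpha_1+2\eps}$ and drop the negative term, yielding $C=4/\alpha_0$; both are valid and the argument is the same in spirit.
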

\begin{proof}
Direct computation shows that 
$\frac{\partial_\eps\rho_\eps}{\rho_\eps}=\frac{2(\alpha_1+\alpha_0-2u)}{(\alpha_1+2\eps)(2u-\alpha_0+2\eps)}$. Since $|\eps|\le \alpha_0/4$ and $u\in[\alpha_0, \alpha_1]$ we have $(\alpha_1+2\eps)(2u-\alpha_0+2\eps)\ge (\alpha_1-\alpha_0/2)\alpha_0/2>\alpha_0^2/4$ and 
$|2(\alpha_1+\alpha_0-2u)|\le 2(\alpha_1-\alpha_0)$. Letting $C=8(\alpha_1-\alpha_0)/\alpha_0^2$ finishes the proof.
\end{proof}
Define the random system $(\Omega, \{T_\omega\},\PP_\eps)$. Set  $\Omega=[\alpha_0, \alpha_1]^{\mathbb N}$ and $\PP_\eps=\eta_\eps^{\mathbb N}$. In this example $g_{z,\omega}:=g_{z,u_0,u_1,\dots,u_{n}}=g_{0}\circ g_{u_1}\circ\cdots\circ g_{u_{n}},$ 
with $g_0(x):=\frac{x+1}{2}$ and $g_{u_i}:= T^{-1}_{u_i,1}$, where $T_{u_i,1}:=T_{u_i}|_{[0,1/2]}$, and $n=|z|-1$.  For each $\omega$ we define a  sequence of pre-images of $\frac{1}{2}$ as follows. 
Let  $x'_0(\omega)=1$, $x'_1(\omega)=\frac{3}{4}$,  and
\begin{equation}\label{eq:x_n}
x'_n(\omega)=g_{z,\omega}(\frac12) \,\, \text{for} \,\, n\ge 2.
\end{equation}  
The sequences $\{x'_n(\omega)\}$ will allow us to define the inducing procedure  for each $T_\omega$. Notice that the sequence $\{x'_n(\omega)\}_{n\ge 0}$ generates a partition $\P_{\omega}=\{(x'_n(\omega), x'_{n-1}(\omega)]\mid n\ge 0\}$ on $\left(\frac{1}{2}, 1\right]$. We define $\hat T_{\omega}$ as the first return map under the orbit of $T_{\omega}^n$ to $\Delta$; i.e., for $x\in\Delta$
$$\hat T_{\omega}(x)=T_{\omega}^{R_{\omega}(x)}(x),$$
where 
\begin{equation}\label{eq:lsv_return}
R_\omega|_{(x'_n(\omega), x'_{n-1}(\omega)]}=n.
\end{equation}
The random dynamical system $(\hat \Omega, \{\hat T_{\hat\omega}\},\hat\PP_\eps)$ is then defined with $\hat T_{\hat\omega}:=\hat T_{\omega}$, where $[\hat\omega]_0=\omega$.
Note that in this example ${\hat T}'_\omega(x)\ge 2$. Thus, (A2) is satisfied. To verify the (A1) we first introduce some  notation. 
Related to the random sequence $\{x_n'(\omega)\}$, we define another random sequence $\{x_n(\omega)\}$ which takes values in $[0,1/2]$. Let $x_1=\frac{1}{2}$ and $x_n(\omega)= \tilde g_{z, \omega}(\frac{1}{2})$, where $\tilde g_{z,\omega}:= g_{u_1}\circ\cdots\circ g_{u_{n}}$. Let $\amin, \amax\in \Omega$ be two constant  sequences whose entries are  $\alpha_0$ and $\alpha_1$ respectively. 
For all $n\ge 1$ and $\omega\in\Omega$ the following inequality holds   
\begin{equation}\label{changeq}
x_n(\amin)\le x_n(\omega) \le  x_n(\amax).
\end{equation}
The proof of the inequality is analogous to that of  Lemma 4.4 in \cite{BBD}. Moreover, it is well known  that 
$x_n(\amin) \sim 
\frac{1}{2} \alpha_0^{-{1}/{\alpha_0}}n^{-{1}/{\alpha_0}}$ so if we define
$c_n(\amin) := x_n(\amin)n^{{1}/{\alpha_0}}$  then 
$\lim_n c_n(\amin) = \frac{1}{2} \alpha_0^{-{1}/{\alpha_0}}:= c(\amin)$.
We define $c_n(\amax)$ and $c_n(\amax)$ analogously.
Therefore, \eqref{changeq} implies that 
\begin{equation}\label{eq:coarsest}
c_n(\amin)n^{-1/\alpha_0} \le x_n(\omega) \le  c_n(\amax)n^{-1/\alpha_1}.
\end{equation}
Note that $\frac{x_n(\sigma\omega)+1}{2}=x'_n(\omega)$, where $\sigma:\Omega\to\Omega$ is the one sided shift map.\\
 
\noindent{\bf Verifying (A1)}\\
 A key step to verify (A1) is the estimation of $E_{\eta_\eps}[x'_{n-1}(\omega)-x'_n(\omega)]$. This will be achieved by using the other random sequence $\{x_n(\omega)\}$. We first start with an auxiliary lemma and a corollary.
\begin{lemma}\label{normalization} 
Let $c\ge 1$ and $\tilde c(\eta_\eps)= \frac{\alpha_0+2\eps}{(\alpha_1-\alpha_0)(\alpha_1+2\eps)}$. Then, as $t \rightarrow \infty$
\begin{equation}
E_{\eta_\eps} \left[e^{-(cu-\alpha_0)t}\right]\sim
\tilde c(\eta_\eps)\cdot\frac{1}{ct}\cdot e^{-(c-1)\alpha_0t}. 
\end{equation}
\end{lemma}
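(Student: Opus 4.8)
The plan is to reduce the computation to a standard Watson-lemma / Laplace-type asymptotic by exploiting the explicit form of the density $\rho_\eps$. First I would write out the expectation explicitly: since $d\eta_\eps = \rho_\eps\, du$ with $\rho_\eps(u) = \tfrac{2}{(\alpha_1-\alpha_0)(\alpha_1+2\eps)}(u - \tfrac{\alpha_0}{2} + \eps)$ supported on $[\alpha_0,\alpha_1]$, we have
\[
E_{\eta_\eps}\left[e^{-(cu-\alpha_0)t}\right] = \frac{2}{(\alpha_1-\alpha_0)(\alpha_1+2\eps)}\int_{\alpha_0}^{\alpha_1} \left(u - \tfrac{\alpha_0}{2} + \eps\right) e^{-(cu-\alpha_0)t}\, du.
\]
The integrand has its mass concentrated near the left endpoint $u = \alpha_0$ as $t\to\infty$, where the exponent $-(cu-\alpha_0)t = -(c-1)\alpha_0 t$ is largest (using $c\ge 1$ so that $cu - \alpha_0 \ge (c-1)\alpha_0 > 0$ with equality only at the endpoint when... well, strictly the exponent is decreasing in $u$). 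Substituting $u = \alpha_0 + v/(ct)$ and extending the range to $[0,\infty)$ with exponentially small error is the natural move.

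Carrying this out: after the substitution, $du = dv/(ct)$, the exponential becomes $e^{-(c-1)\alpha_0 t} e^{-v}$, and the polynomial prefactor $\left(u - \tfrac{\alpha_0}{2} + \eps\right)$ tends to $\left(\tfrac{\alpha_0}{2} + \eps\right)$ as $t\to\infty$ (the correction is $O(1/t)$). Thus
\[
\int_{\alpha_0}^{\alpha_1} \left(u - \tfrac{\alpha_0}{2} + \eps\right) e^{-(cu-\alpha_0)t}\, du \sim \left(\tfrac{\alpha_0}{2} + \eps\right) e^{-(c-1)\alpha_0 t}\cdot \frac{1}{ct}\int_0^\infty e^{-v}\, dv = \left(\tfrac{\alpha_0}{2} + \eps\right)\frac{e^{-(c-1)\alpha_0 t}}{ct}.
\]
Multiplying by the normalization constant $\tfrac{2}{(\alpha_1-\alpha_0)(\alpha_1+2\eps)}$ gives the prefactor $\tfrac{2(\alpha_0/2 + \eps)}{(\alpha_1-\alpha_0)(\alpha_1+2\eps)} = \tfrac{\alpha_0 + 2\eps}{(\alpha_1-\alpha_0)(\alpha_1+2\eps)} = \tilde c(\eta_\eps)$, which yields exactly $E_{\eta_\eps}[e^{-(cu-\alpha_0)t}] \sim \tilde c(\eta_\eps)\cdot \tfrac{1}{ct}\cdot e^{-(c-1)\alpha_0 t}$, as claimed.

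The steps are, in order: (1) write the expectation as the explicit integral above; (2) justify that the contribution of $u$ bounded away from $\alpha_0$ (say $u \in [\alpha_0 + \delta, \alpha_1]$) is $O(e^{-((c-1)\alpha_0 + c\delta)t})$, hence negligible relative to the claimed asymptotic; (3) on $[\alpha_0, \alpha_0+\delta]$ perform the rescaling $v = ct(u-\alpha_0)$, use the continuity of the polynomial factor to replace it by its value at $u=\alpha_0$ up to a factor $1 + O(1/t)$, and compute $\int_0^\infty e^{-v}dv = 1$ after extending the range (again at exponentially small cost); (4) collect the constants. I expect step (2)--(3), i.e.\ making the ``concentration near $u=\alpha_0$'' rigorous and controlling the error terms uniformly, to be the only real content; it is a routine Laplace-method estimate, and the hypothesis $c\ge 1$ is used precisely to ensure the exponent $cu-\alpha_0$ is nonnegative on the support so that the maximum genuinely sits at the left endpoint. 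No uniformity in $\eps$ is asserted in the statement, so $\eps$ may simply be treated as a fixed parameter throughout (with $|\eps|\le\alpha_0/4$ ensuring $\alpha_0/2 + \eps > 0$ so that $\tilde c(\eta_\eps) > 0$ and the asymptotic is non-degenerate).
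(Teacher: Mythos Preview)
Your argument is correct, but the paper takes a more elementary route: since the integrand is just a linear polynomial times an exponential, the integral can be computed in closed form. Factoring out $e^{\alpha_0 t}$ and integrating $\int_{\alpha_0}^{\alpha_1} e^{-cut}(u-\tfrac{\alpha_0}{2}+\eps)\,du$ directly (one integration by parts) gives
\[
\frac{2e^{-(c-1)\alpha_0 t}}{(\alpha_1-\alpha_0)(\alpha_1+2\eps)}\cdot\frac{1}{ct}\left[\Bigl(\tfrac{\alpha_0}{2}+\eps+\tfrac{1}{ct}\Bigr)-e^{-c(\alpha_1-\alpha_0)t}\Bigl(\alpha_1-\tfrac{\alpha_0}{2}+\eps+\tfrac{1}{ct}\Bigr)\right],
\]
from which the asymptotic is read off immediately: the $1/(ct)$ inside the bracket and the entire second term vanish as $t\to\infty$. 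Your Laplace-method approach is of course equivalent---your ``contribution away from $\alpha_0$'' in step~(2) is exactly the $e^{-c(\alpha_1-\alpha_0)t}$ boundary term, and your $O(1/t)$ correction in step~(3) is the $1/(ct)$ inside the bracket---but here the exact computation is shorter and avoids any need to justify truncation and extension of the integration range. One small remark: the role you assign to $c\ge 1$ (ensuring the maximum sits at the left endpoint) is not quite right, since $u\mapsto -(cu-\alpha_0)t$ is decreasing for any $c>0$; the hypothesis only guarantees the leading factor $e^{-(c-1)\alpha_0 t}$ is bounded, and in fact the asymptotic formula as stated holds for all $c>0$.
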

\begin{proof}
We have 
\begin{align*}
&E_{\eta_\eps} \left[e^{-(cu-\alpha_0)t}\right]=
\frac{2e^{\alpha_0t}}{(\alpha_1-\alpha_0)(\alpha_1+2\eps)} \int_{\alpha_0}^{\alpha_1}e^{-cut}(u-\frac{\alpha_0}{2}+\eps)du\\
&\frac{2e^{-(c-1)\alpha_0t}}{(\alpha_1-\alpha_0)(\alpha_1+2\eps)}\cdot\frac{1}{ct}\left((\frac{\alpha_0}{2}+\eps+\frac{1}{ct})-e^{-c(\alpha_1-\alpha_0)t}(\alpha_1-\frac{\alpha_0}{2}+\eps+\frac{1}{ct}) \right)\\
&\sim \frac{1}{ct}\cdot\frac{\alpha_0+2\eps}{(\alpha_1-\alpha_0)(\alpha_1+2\eps)}e^{-(c-1)\alpha_0t}.
\end{align*}
\end{proof}
Lemma \ref{normalization} shows that assumption (5.11) of \cite{BBR} is satisfied\footnote{Indeed, we have
\begin{equation}
\begin{split}
&\frac{\log n}{n[x_{n}(\omega)]^{\alpha_0}} \ge \alpha_0 2^{\alpha_0} \frac{(\log n)}{n}\biggl\{\sum_{k=2}^n
\left[\frac{2c_k(\alpha_0)}{k^{\frac{1}{\alpha_0}}}\right]^{\omega_{n-k} - \alpha_0}\\
&\hskip 2 cm - \frac{1+ \alpha_0}{2}  \left[\frac{2c_k(\alpha_1)}{k^{\frac{1}{\alpha_1}}}\right]^{2\omega_{n-k} - \alpha_0}\biggr\}:= \frac{\log n}{n}\sum_{k=1}^n  X_k(\omega).
\end{split}
\end{equation}
Lemma \ref{normalization} shows that $\frac{\log n}{n}\sum_{k=1}^nE_{\eta_\eps}(X_k(\omega))\to \alpha_02^{\alpha_0}\tilde c(\eta_\eps)$. Corollary \ref{roldstuff} will then follow using large deviation estimates for independent random variables, see \cite{BBR} for details.}. Consequently, we obtain an upper bound on the $x_n(\omega)$. 
\begin{corollary}\label{roldstuff}
There exists $c>0$ independent of $\eps$ such that $x_n(\omega)\le 2c^{-1/\alpha_0}n^{-\frac{1}{\alpha_0}}(\log n)^{\frac{1}{\alpha_0}}$. In addition one can find constants $C>0$, $u>0$, $v\in (0, 1)$ independent\footnote{By \eqref{p-size} both $c$ and $C$ can be chosen independent of $\eps$.} of $\eps$ such that $\PP_\eps\{n_1>n\} \le Ce^{-un^v}$.
\end{corollary}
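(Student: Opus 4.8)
\textbf{Proof proposal for Corollary~\ref{roldstuff}.}
The plan is to deduce both assertions from a single large deviation estimate for a sum of independent, uniformly bounded random variables indexed by the coordinates of $\omega\in\Omega=[\alpha_0,\alpha_1]^\NN$, whose mean and variance are extracted from Lemma~\ref{normalization}. First I would set up the telescoping inequality. Writing $g_u=(T_{u,1})^{-1}$ with $T_{u,1}(y)=y(1+2^uy^u)$, a Taylor expansion near $0$ gives $g_u(x)^{-u}-x^{-u}=u2^u+O(x^u)$, uniformly in $u\in[\alpha_0,\alpha_1]$; this is the random analogue of the estimate behind Lemma~4.4 of \cite{BBD}. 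Iterating this along the orbit that defines $x_n(\omega)=g_{u_1}\circ\cdots\circ g_{u_n}(\tfrac12)$, bookkeeping the varying exponents via $\omega_j\ge\alpha_0$, and using the two-sided comparison \eqref{changeq}--\eqref{eq:coarsest} to replace the unknown intermediate iterates $x_k(\omega)$ by the explicit deterministic sequences $c_k(\amin)k^{-1/\alpha_0}$ and $c_k(\amax)k^{-1/\alpha_1}$, one telescopes $[x_n(\omega)]^{-\alpha_0}$ and obtains, for a constant depending only on $\alpha_0,\alpha_1$,
\[
\frac{\log n}{n[x_n(\omega)]^{\alpha_0}}\ge\frac{\log n}{n}\sum_{k=1}^n X_k(\omega),
\]
where $X_k(\omega)$ is a fixed bounded function of the single coordinate $\omega_{n-k}$ only, of the schematic form $X_k\simeq\alpha_02^{\alpha_0}\big([2c_k(\amin)k^{-1/\alpha_0}]^{\omega_{n-k}-\alpha_0}-\tfrac{1+\alpha_0}{2}[2c_k(\amax)k^{-1/\alpha_1}]^{2\omega_{n-k}-\alpha_0}\big)$. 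Since the $\omega_{n-k}$, $k=1,\dots,n$, are i.i.d.\ with law $\eta_\eps$, the $X_k$ are independent and take values in one fixed bounded interval.

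Next I would compute their first and second moments. Writing $[2c_k(\amin)k^{-1/\alpha_0}]^{\omega_{n-k}-\alpha_0}=e^{-(\omega_{n-k}-\alpha_0)t_k}$ with $t_k=\tfrac1{\alpha_0}\log k-\log(2c_k(\amin))\to\infty$, and the correction term as $e^{-(2\omega_{n-k}-\alpha_0)s_k}$ with $s_k\sim\tfrac1{\alpha_1}\log k$, Lemma~\ref{normalization} applied with $c=1$ and $c=2$ gives $E_{\eta_\eps}[X_k]\sim\alpha_02^{\alpha_0}\tilde c(\eta_\eps)/t_k$ (the $c=2$ piece carries an extra factor $k^{-\alpha_0/\alpha_1}$ and is of lower order) and $E_{\eta_\eps}[X_k^2]=O(1/\log k)$, hence $\sum_{k\le n}\mathrm{Var}_{\eta_\eps}(X_k)=O(n/\log n)$. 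Abel summation then yields $\tfrac{\log n}{n}\sum_{k=1}^n E_{\eta_\eps}[X_k]\to\alpha_02^{\alpha_0}\tilde c(\eta_\eps)=:2\kappa_\eps$; by the constraint \eqref{p-size}, $\tilde c(\eta_\eps)$ and the speed of this convergence are controlled uniformly, so there are constants $0<\kappa_-\le\kappa_+<\infty$ and $n_0$, all independent of $\eps$, with $\kappa_-\le\kappa_\eps\le\kappa_+$ and $\tfrac{\log n}{n}\sum_{k\le n}E_{\eta_\eps}[X_k]\ge\kappa_-$ for $n\ge n_0$.

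Finally I would apply Bernstein's inequality to the independent bounded variables $X_k$: with $\sum_{k\le n}\mathrm{Var}_{\eta_\eps}(X_k)=O(n/\log n)$ one gets, for $n\ge n_0$ and uniformly in $\eps$,
\[
\PP_\eps\Big(\tfrac{\log n}{n}\textstyle\sum_{k=1}^n X_k(\omega)<\kappa_-\Big)\le\exp\!\big(-\mathrm{const}\cdot n/\log n\big)\le Ce^{-un^v}
\]
for any fixed $v\in(0,1)$ and suitable $C,u>0$ independent of $\eps$ (the sub-linear exponent $v<1$ is forced by the variance sum being $O(n/\log n)$ rather than $O(n)$). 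On the complement of that event the telescoping inequality gives $[x_n(\omega)]^{\alpha_0}\le 2^{\alpha_0}n^{-1}\log n/\kappa_-$, i.e.\ $x_n(\omega)\le 2c^{-1/\alpha_0}n^{-1/\alpha_0}(\log n)^{1/\alpha_0}$ with $c:=4\kappa_-$ independent of $\eps$; summability of $Ce^{-un^v}$ and Borel--Cantelli upgrade this to the stated bound for $\PP_\eps$-a.e.\ $\omega$ and all large $n$. For the tail estimate, by \eqref{eq:lsv_return} the event $\{n_1>n\}$ requires $x_n(\omega)$ to violate the polynomial bound just established, hence is contained in the exceptional set above, whence $\PP_\eps\{n_1>n\}\le Ce^{-un^v}$.

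The main obstacle is the telescoping step: one must convert the random recursion for $x_n(\omega)$---whose increments involve the fluctuating exponents $\omega_j$---into an honest lower bound for $[x_n(\omega)]^{-\alpha_0}$ that is a sum of independent functions of single coordinates, while keeping uniform control (in $\eps$, in $n$, and along the orbit) of all error terms: those from the Taylor expansion of $g_u$, from the two-sided comparison \eqref{eq:coarsest}, and from replacing $t_k$ by $\tfrac1{\alpha_0}\log k$. This is exactly the part for which \cite{BBD,BBR} are invoked; the standing separation hypothesis $\alpha_1<2\alpha_0$ is what guarantees that the $c=2$ correction term, and the errors it controls, are genuinely of lower order.
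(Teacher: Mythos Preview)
Your proposal is correct and follows essentially the same route as the paper: the telescoping lower bound for $\frac{\log n}{n}[x_n(\omega)]^{-\alpha_0}$ as a sum $\frac{\log n}{n}\sum_k X_k(\omega)$ of independent bounded variables, the use of Lemma~\ref{normalization} to show $\frac{\log n}{n}\sum_k E_{\eta_\eps}[X_k]\to \alpha_02^{\alpha_0}\tilde c(\eta_\eps)$, and a large deviation bound for independent bounded summands. The paper simply cites \cite{BBR} for the concentration step, whereas you make it explicit via Bernstein's inequality together with the variance estimate $\sum_k\mathrm{Var}_{\eta_\eps}(X_k)=O(n/\log n)$; this yields the slightly sharper tail $\exp(-\text{const}\cdot n/\log n)$, which indeed dominates $Ce^{-un^v}$ for any $v\in(0,1)$.
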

\begin{lemma}\label{Ilength}
There exists a $\hat C>0$, independent of $\eps$, such that
$$2E_{\eta_\eps}[x'_{n-1}(\omega)-x'_n(\omega)]=E_{\eta_\eps} [x_{n-1}(\sigma\omega)-x_{n}(\sigma\omega)]\le \hat C\frac{[\log n]^{\frac{\alpha_0+1}{\alpha_0}}}{n^{\frac{1}{\alpha_0}+1}}.$$
\end{lemma}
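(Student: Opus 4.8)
The plan is to reduce the statement to one pointwise identity coming from the explicit form of $T_u$ near the neutral fixed point, $T_u(x)=x+2^ux^{1+u}$ on $[0,\frac12]$, and then feed in the uniform decay estimate of Corollary~\ref{roldstuff}. The first equality in the statement is free: since $x'_n(\omega)=\frac{1}{2}\bigl(x_n(\sigma\omega)+1\bigr)$ (noted just before the lemma), one has the pointwise identity $x'_{n-1}(\omega)-x'_n(\omega)=\frac{1}{2}\bigl(x_{n-1}(\sigma\omega)-x_n(\sigma\omega)\bigr)$, and integrating against $\PP_\eps$ gives $2E_{\eta_\eps}[x'_{n-1}(\omega)-x'_n(\omega)]=E_{\eta_\eps}[x_{n-1}(\sigma\omega)-x_n(\sigma\omega)]$. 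So the whole content is the upper bound for the right-hand side.

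First I would record the pointwise recursion. By the definition of the sequence $\{x_n(\cdot)\}$, peeling off the outermost inverse branch gives $x_n(\omega)=g_{u_1}\bigl(x_{n-1}(\sigma\omega)\bigr)$, where $g_{u_1}$ is the inverse of the (injective, onto $[0,1]$) left branch of $T_{u_1}$; applying that branch and using $x_n(\omega)\in(0,\frac12]$ gives $x_{n-1}(\sigma\omega)=T_{u_1}(x_n(\omega))=x_n(\omega)+2^{u_1}x_n(\omega)^{1+u_1}$. Next I would use that $\PP_\eps=\eta_\eps^{\NN}$ is a product measure, hence $\sigma$-invariant, so $E_{\eta_\eps}[x_n(\sigma\omega)]=E_{\eta_\eps}[x_n(\omega)]$. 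Combining the last two facts,
\[
E_{\eta_\eps}\bigl[x_{n-1}(\sigma\omega)-x_n(\sigma\omega)\bigr]
= E_{\eta_\eps}[x_{n-1}(\sigma\omega)]-E_{\eta_\eps}[x_n(\omega)]
= E_{\eta_\eps}\bigl[2^{u_1}\,x_n(\omega)^{1+u_1}\bigr].
\]

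Finally I would estimate this expectation pointwise in $\omega$. Since $x_n(\omega)\le\frac12<1$ and $u_1\in[\alpha_0,\alpha_1]$, we get $2^{u_1}\le2$ and $x_n(\omega)^{1+u_1}\le x_n(\omega)^{1+\alpha_0}$, so the integrand is at most $2\,x_n(\omega)^{1+\alpha_0}$. Inserting the bound $x_n(\omega)\le 2c^{-1/\alpha_0}n^{-1/\alpha_0}(\log n)^{1/\alpha_0}$ from Corollary~\ref{roldstuff} (valid for all $\omega$, with $c$ independent of $\eps$), raising to the power $1+\alpha_0$, and using $\frac{1+\alpha_0}{\alpha_0}=\frac{1}{\alpha_0}+1$, we obtain
\[
2^{u_1}\,x_n(\omega)^{1+u_1}\ \le\ 2\,\bigl(2c^{-1/\alpha_0}\bigr)^{1+\alpha_0}\,\frac{(\log n)^{\frac{\alpha_0+1}{\alpha_0}}}{n^{\frac{1}{\alpha_0}+1}}
\]
for every $\omega$; integrating and setting $\hat C:=2\bigl(2c^{-1/\alpha_0}\bigr)^{1+\alpha_0}$, which is independent of $\eps$, closes the proof. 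The genuinely hard estimate has already been done in Corollary~\ref{roldstuff} (which absorbs the large deviation argument); within this lemma the only point requiring care is the shift bookkeeping — aligning $x_{n-1}(\sigma\omega)$ with $T_{u_1}(x_n(\omega))$ rather than with $x_{n-1}(\omega)$, and invoking $\sigma$-invariance only for the single term $E_{\eta_\eps}[x_n(\sigma\omega)]$ — after which everything reduces to a pointwise estimate that is uniform in $\omega$ and in $\eps$.
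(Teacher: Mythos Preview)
Your reduction to $E_{\eta_\eps}[2^{u_1}x_n(\omega)^{1+u_1}]$ via the recursion $x_{n-1}(\sigma\omega)=T_{u_1}(x_n(\omega))$ and $\sigma$-invariance is exactly what the paper does (up to indexing the first coordinate as $\omega_0$ rather than $u_1$), as is the monotonicity step $2^{u_1}x_n(\omega)^{1+u_1}\le 2^{\alpha_0}x_n(\omega)^{1+\alpha_0}$ once $2x_n(\omega)\le1$.

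The gap is in the last step. The bound $x_n(\omega)\le 2c^{-1/\alpha_0}\,n^{-1/\alpha_0}(\log n)^{1/\alpha_0}$ from Corollary~\ref{roldstuff} is \emph{not} valid for all $\omega$, and your parenthetical ``(valid for all $\omega$)'' is false. Indeed, for the constant sequence $\omega=\amax$ one has $x_n(\amax)\sim c(\amax)n^{-1/\alpha_1}$ by the discussion around \eqref{eq:coarsest}, and since $\alpha_1>\alpha_0$ this decays strictly slower than $n^{-1/\alpha_0}(\log n)^{1/\alpha_0}$; so the inequality fails for this $\omega$ and all large $n$. The Corollary is a probabilistic statement: there is a random threshold $n_1(\omega)$ such that the bound holds for $n\ge n_1(\omega)$, and the second sentence of the Corollary supplies the stretched-exponential tail $\PP_\eps\{n_1>n\}\le Ce^{-un^v}$.

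The fix, which is precisely what the paper does, is to split the expectation:
\[
E_{\eta_\eps}\bigl[x_n(\omega)^{1+\alpha_0}\bigr]
= E_{\eta_\eps}\bigl[\chi_{\{n_1\le n\}}\,x_n(\omega)^{1+\alpha_0}\bigr]
+ E_{\eta_\eps}\bigl[\chi_{\{n_1> n\}}\,x_n(\omega)^{1+\alpha_0}\bigr].
\]
On the first event your pointwise estimate applies and gives the claimed $n^{-1/\alpha_0-1}(\log n)^{(\alpha_0+1)/\alpha_0}$; on the second, bound $x_n(\omega)^{1+\alpha_0}\le 1$ and use the tail estimate, which contributes a term $Ce^{-un^v}$ that is absorbed into the polynomial bound. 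With this splitting your argument is complete and coincides with the paper's.
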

\begin{proof}
By definition $E_{\eta_\eps}[x'_{n-1}(\omega)-x'_n(\omega)]=\frac12E_{\eta_\eps} [x_{n-1}(\sigma\omega)-x_{n}(\sigma\omega)]$. Thus, it is enough to deal with $E_{\eta_\eps} [x_{n-1}(\sigma\omega)-x_{n}(\sigma\omega)]$. We have
\begin{equation}\label{step1}
\begin{split}
E_{\eta_\eps} [x_{n-1}(\sigma\omega)-x_{n}(\sigma\omega)]&= E_{\eta_\eps} [x_{n-1}(\sigma\omega)-x_{n}(\omega)]\\
&=E_{\eta_\eps} [2^{\omega_0} (x_n(\omega))^{\omega_0+1}],
\end{split}
\end{equation}
where we have used stationarity to write $E_{\eta_\eps} [x_{n}(\omega)]=E_{\eta_\eps} [x_{n}(\sigma\omega)]$ and that $T_{\omega_0} (x_n(\omega))=x_{n-1}(\sigma\omega)$ and $T_{\omega_0}(x_n(\omega))= x_n(\omega)+2^{\omega_0} (x_n(\omega))^{\omega_0+1}$. Using \eqref{step1}, Corollary \ref{roldstuff} and the fact that $0\le 2x_n(\omega)\le 1$, we obtain
\begin{equation}\label{step2}
\begin{split}
&E_{\eta_\eps} [x_{n-1}(\sigma\omega)-x_{n}(\sigma\omega)]\le E_{\eta_\eps} [2^{\alpha_0} (x_n(\omega))^{\alpha_0+1}]\\
&\le 2^{\alpha_0} \left(E_{\eta_\eps} [\chi_{\{n_1(\omega)\le n\}}\cdot x_n(\omega)^{\alpha_0+1}]+E_{\eta_\eps} [\chi_{\{n_1(\omega)> n\}}\cdot x_n(\omega)^{\alpha_0+1}]\right)\\
&=2^{2\alpha_0+1}c^{-1-\frac{1}{\alpha_0}}
\frac{[\log n]^{\frac{\alpha_0+1}{\alpha_0}}}{n^{\frac{1}{\alpha_0}+1}}+Ce^{-un^v}\le \hat C \frac{[\log n]^{\frac{\alpha_0+1}{\alpha_0}}}{n^{\frac{1}{\alpha_0}+1}}.
\end{split}
\end{equation}
\end{proof}
The following lemma is proved in \cite{BBD} (see  Lemma 4.8 in \cite{BBD}) using the Koebe principle \cite{MS}.
\begin{lemma}\label{lem:weakdist}
There exists $\tilde D>0$ such that 
\begin{equation}\label{eq:dist}
\left|\frac{g_{z, \omega}'(x)}{g_{z, \omega}'(y)}-1\right| \le  \tilde D|x-y|
\end{equation}
for any $x, y \in X$ and $z\in Z$ and any $\omega\in \Omega$. 
In particular, there exists a $D>0$ independent of $\omega$ (hence independent, of $\eps$), such that 
\begin{equation}\label{wbdd}
\frac{g_{z,  \omega}'(x)}{g_{z,  \omega}'(y)} \le D  \text{ for any }z\in Z \text{ and } x, y\in \Delta. 
\end{equation}
\end{lemma}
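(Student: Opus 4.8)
The plan is to deduce \eqref{eq:dist} from the Koebe distortion principle, in the form used in \cite[Lemma 4.8]{BBD}, and then to read off \eqref{wbdd} as an immediate algebraic consequence. Recall that $g_{z,\omega}=g_0\circ g_{u_1}\circ\cdots\circ g_{u_n}$ with $g_0(x)=\tfrac{x+1}{2}$ affine, so the branch of $T^{n+1}_{\hat\omega}$ that $g_{z,\omega}$ inverts is, modulo the affine factor, a composition of left branches of the maps $T_{u_i}$. The key geometric input is that every branch of every $T_u$ is a \emph{full} branch onto $X=[0,1]$: this lets one extend the relevant branch of $T^{n+1}_{\hat\omega}$ to a $C^{3}$ diffeomorphism onto all of $X$, so that its image contains $\overline\Delta=[\tfrac12,1]$ together with a definite amount of Koebe space (the interval $[0,\tfrac12]$ is available on one side, which — given that $1$ is a repelling fixed point with $T_u'(1)=2$ for every $u$ — is enough to run the Koebe lemma).

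First I would verify the Schwarzian hypothesis: the right branch $x\mapsto 2x-1$ is affine, hence $S T_u\equiv0$ there, while for the left branch $T_u(x)=x+2^ux^{1+u}$ one computes $T_u'''(x)=u(u-1)(1+u)2^ux^{u-2}<0$ for $u\in(0,1)$, so $S T_u<0$ on $(0,\tfrac12]$, uniformly for $u\in[\alpha_0,\alpha_1]$; since negative Schwarzian derivative is stable under composition, the extended branch has non-positive Schwarzian for every $z$ and every $\hat\omega$. Feeding this together with the Koebe space into the Koebe lemma \cite{MS} then produces a constant that depends only on the amount of space and on bounds attached to $[\alpha_0,\alpha_1]$, hence is independent of $z$, of $\hat\omega$ and of the return time $n+1$, and which controls the distortion of the extended branch; transporting the estimate through its inverse $g_{z,\omega}$ (a contraction with image inside $X$) gives \eqref{eq:dist}. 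Crucially, $\eps$ enters only through the weight $\eta_\eps$ placed on the parameter interval $[\alpha_0,\alpha_1]$ and not through the maps $T_u$ themselves, so a constant depending only on $[\alpha_0,\alpha_1]$ is automatically $\eps$-independent; by \eqref{p-size} the same is true of the auxiliary constants $c,C$ appearing above.

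The step I expect to be the main obstacle is precisely this uniformity of $\tilde D$ over the unbounded return times. The cylinders $g_{z,\omega}(\Delta)$ accumulate at the slow point $\tfrac12$, and the forward orbit of such a cylinder lingers for $\approx n$ iterates near the neutral fixed point $0$, where $T_u''/T_u'\sim x^{u-1}\to\infty$; consequently the naive bound $\log\bigl|(T^{n+1}_{\hat\omega})'(x)/(T^{n+1}_{\hat\omega})'(y)\bigr|\le\sum_k\sup|T''/T'|\cdot\diam\bigl(T^{k}_{\hat\omega}(\text{cyl})\bigr)$ is not obviously finite. The Koebe principle sidesteps this because it uses only the cross-ratio (Schwarzian) structure together with the existence of definite space in the image, both available here regardless of how close the cylinder lies to $\tfrac12$; alternatively one could push the naive estimate through by replacing $\sup|T''/T'|$ with a sharper local term and using the polynomial decay $x_n(\amin)\sim\tfrac12\alpha_0^{-1/\alpha_0}n^{-1/\alpha_0}$, which renders the series summable uniformly in $n$, but the Koebe route is cleaner and more robust to the randomness of the $u_i$. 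Finally, \eqref{wbdd} is immediate: for $x,y\in\Delta$, $\dfrac{g_{z,\omega}'(x)}{g_{z,\omega}'(y)}=1+\Bigl(\dfrac{g_{z,\omega}'(x)}{g_{z,\omega}'(y)}-1\Bigr)\le 1+\tilde D|x-y|\le 1+\tilde D=:D$, using $|x-y|\le|\Delta|\le1$.
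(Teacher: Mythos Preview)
Your proposal is correct and follows exactly the route the paper takes: the paper does not give an argument of its own but simply refers to \cite[Lemma~4.8]{BBD}, stating that the bound is obtained via the Koebe principle \cite{MS}, and you have reproduced (and indeed fleshed out) precisely that argument, together with the trivial passage from \eqref{eq:dist} to \eqref{wbdd}. The only point worth flagging is the one-sided Koebe space at the right endpoint $1$; you are right that this is where a small extra observation is needed (e.g.\ uniform expansion $T_u'(1)=2$ and the one-sided Koebe lemma as in \cite{MS}), and this is exactly how \cite{BBD} handles it.
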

For $y\in \Delta$ let  $y_0=y$, and
\begin{equation}\label{eq:ynomega}
y_n(\omega)=g_{u_1}\circ \dots \circ g_{u_n}(y) \text{ for } n \ge 1.
\end{equation}
Then $x_n(\omega)\le y_n(\omega)\le x_{n-1}(\omega)$. Moreover, $g_{z, \omega}^\prime(y)=\frac{y_n'(\omega)}{2}$  and $y_0^\prime =1$, $y_0''=0$. For $\omega\in \Omega$ letting $w_{k}$, $k\ge 0$ be the elements of $\omega$ we have the following 
\begin{lemma}\label{lem:yiprime}
For any $n\ge 1$ 
$$
|y_{n}''(\omega)|\le\alpha_1 
(\alpha_1+1)2^{\alpha_1}y_n'(\omega)\sum_{j=1}^n y_j(\sigma^{n-j}\omega)^{\omega_{n-j}-1}|y_j'(\sigma^{n-j}\omega)|
$$
and
$$
\begin{aligned}
|y_{n}'''(\omega)|\le 
3\alpha_1(\alpha_1+1)2^{\alpha_1}y_n'(\omega)\sum_{j=1}^n y_j(\sigma^{n-j}\omega)^{\omega_{n-j}-1}|y_j''(\sigma^{n-j}\omega)|\\
+(1-\alpha_0^2)\alpha_12^{\alpha_1}{y_{n}'(\omega)}\sum_{j=1}^n y_j(\sigma^{n-j}\omega)^{\omega_{n-j}-2}(y_j'(\sigma^{n-j}\omega))^2.
\end{aligned}
$$
\end{lemma}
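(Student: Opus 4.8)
The plan is to derive both estimates from a single one-step recursion obtained by peeling off the outermost inverse branch. Writing $\sigma$ for the one-sided shift and regarding $y_n(\omega)$, $y_{n-1}(\sigma\omega),\dots$ as functions of the base point, one has $y_n(\omega)=g_{\omega_0}\bigl(y_{n-1}(\sigma\omega)\bigr)$. Differentiating this identity in the base point and using the chain rule and its higher-order analogues gives
\[
y_n''=g_{\omega_0}''(\cdot)\,(y_{n-1}'(\sigma\omega))^{2}+g_{\omega_0}'(\cdot)\,y_{n-1}''(\sigma\omega),
\]
\[
y_n'''=g_{\omega_0}'''(\cdot)\,(y_{n-1}'(\sigma\omega))^{3}+3\,g_{\omega_0}''(\cdot)\,y_{n-1}'(\sigma\omega)\,y_{n-1}''(\sigma\omega)+g_{\omega_0}'(\cdot)\,y_{n-1}'''(\sigma\omega),
\]
with $(\cdot)=y_{n-1}(\sigma\omega)$. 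Next I would convert derivatives of $g_u=T_u^{-1}|_{[0,1/2]}$ into derivatives of $T_u$ by differentiating $g_u'=1/T_u'(g_u)$, which yields $g_u''=-T_u''(g_u)(g_u')^{3}$ and $g_u'''=-T_u'''(g_u)(g_u')^{4}+3\bigl(T_u''(g_u)\bigr)^{2}(g_u')^{5}$. Since $T_u(x)=x+2^{u}x^{u+1}$ on $[0,\tfrac12]$, we have $T_u'\ge1$, $T_u''(x)=2^{u}u(u+1)x^{u-1}\ge0$ and $T_u'''(x)=2^{u}u(u+1)(u-1)x^{u-2}$; using $u\in[\alpha_0,\alpha_1]\subset(0,1)$ and $(u+1)(1-u)=1-u^{2}\le1-\alpha_0^{2}$, this produces the uniform bounds $T_u''(x)\le\alpha_1(\alpha_1+1)2^{\alpha_1}x^{u-1}$ and $|T_u'''(x)|\le(1-\alpha_0^{2})\alpha_1 2^{\alpha_1}x^{u-2}$.

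Then I would iterate the recursion down to the base case $y_0'\equiv1$, $y_0''\equiv y_0'''\equiv0$. At step $k$ the accumulated multiplicative factor is $\prod_{i=0}^{k-1}g_{\omega_i}'\bigl(y_{n-1-i}(\sigma^{i+1}\omega)\bigr)$; by the chain-rule identity $y_m'(\sigma^{i}\omega)=g_{\omega_i}'\bigl(y_{m-1}(\sigma^{i+1}\omega)\bigr)\,y_{m-1}'(\sigma^{i+1}\omega)$ this product telescopes to $y_n'(\omega)/y_{n-k}'(\sigma^{k}\omega)$, producing the common prefactor $y_n'(\omega)$. The second key identity is $g_{\omega_k}\bigl(y_{n-1-k}(\sigma^{k+1}\omega)\bigr)=y_{n-k}(\sigma^{k}\omega)$, which guarantees that $T_{\omega_k}^{(i)}$ is always evaluated at $y_{n-k}(\sigma^{k}\omega)$; after reindexing $j=n-k$ this yields precisely the factors $y_j(\sigma^{n-j}\omega)^{\omega_{n-j}-1}$ and $y_j(\sigma^{n-j}\omega)^{\omega_{n-j}-2}$ appearing in the statement. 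Finally, the only factors surviving the telescoping are ratios $y_m'(\sigma^{k}\omega)/y_{m-1}'(\sigma^{k+1}\omega)\le1$ (each $g_u'\le1$ because $T_u'\ge1$) together with powers of $y_j(\sigma^{n-j}\omega)\le\tfrac12$ (every $g_u$ maps into $[0,\tfrac12]$, so $y_j(\sigma^{n-j}\omega)\in[0,\tfrac12]$ for $j\ge1$); discarding the former and using the latter to absorb surplus powers of $y_j$ closes the estimates.

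Concretely, for $y_n''$ only the first composition formula is needed: the single term $g_{\omega_0}''(\cdot)(y_{n-1}'(\sigma\omega))^{2}$, after iteration and telescoping, gives the first inequality with constant $\alpha_1(\alpha_1+1)2^{\alpha_1}$. For $y_n'''$, the cross term $3\,g_{\omega_k}''(\cdot)y_{n-1-k}'\,y_{n-1-k}''$ produces, by the same mechanism, the sum involving $|y_j''(\sigma^{n-j}\omega)|$ with constant $3\alpha_1(\alpha_1+1)2^{\alpha_1}$, while the pure term $g_{\omega_k}'''(\cdot)(y_{n-1-k}')^{3}$, via the formula for $g_u'''$, contributes to the sum involving $(y_j'(\sigma^{n-j}\omega))^{2}$ with exponent $\omega_{n-j}-2$: the $T_u'''$ piece gives it directly with constant $(1-\alpha_0^{2})\alpha_1 2^{\alpha_1}$, and the $(T_u'')^{2}$ piece contributes a term of the same type, handled after writing $y_j^{2\omega_{n-j}-2}=y_j^{\omega_{n-j}-2}\,y_j^{\omega_{n-j}}\le y_j^{\omega_{n-j}-2}$ (using $y_j\le\tfrac12$) and bounding the remaining $g_u'\le1$; the precise constants in the statement follow from keeping track of these bounds.

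The computation itself is elementary; the only genuine work is the bookkeeping of the nested shifts $\sigma^{i}\omega$, checking that the telescoping collapses exactly as claimed and that every leftover factor is either a branch derivative $g_u'\le1$ or a power of $y_j\le\tfrac12$, so that it can be dropped or absorbed. The one step requiring care is re-centering $T_{\omega_k}^{(i)}$ from the natural pre-image $y_{n-1-k}(\sigma^{k+1}\omega)$, at which the branch derivatives are evaluated, onto $y_{n-k}(\sigma^{k}\omega)$, which is exactly what the identity $g_{\omega_k}(y_{n-1-k}(\sigma^{k+1}\omega))=y_{n-k}(\sigma^{k}\omega)$ accomplishes.
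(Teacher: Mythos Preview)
Your approach is sound and close to the paper's, but the two routes are not identical, and one claim in your write-up is not quite right.

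\medskip
\noindent\textbf{Comparison with the paper.} The paper differentiates the \emph{forward} relation
\[
y_{n-1}(\sigma\omega)=T_{\omega_0}\bigl(y_n(\omega)\bigr)=y_n(\omega)\bigl(1+(2y_n(\omega))^{\omega_0}\bigr),
\]
forms the ratios $y_{j}^{(i)}/y_j'$, and telescopes the sum
\[
-\frac{y_n^{(i)}(\omega)}{y_n'(\omega)}=\sum_{j=1}^{n}\Bigl(\frac{y_{j-1}^{(i)}(\sigma^{n-j+1}\omega)}{y_{j-1}'(\sigma^{n-j+1}\omega)}-\frac{y_j^{(i)}(\sigma^{n-j}\omega)}{y_j'(\sigma^{n-j}\omega)}\Bigr).
\]
Each summand involves only $T''$ (for $i=2$) or $T''$ and $T'''$ (for $i=3$), divided by $T'\ge1$; bounding $T''(x)\le\alpha_1(\alpha_1+1)2^{\alpha_1}x^{u-1}$ and $|T'''(x)|\le(1-\alpha_0^{2})\alpha_1 2^{\alpha_1}x^{u-2}$ then gives the constants exactly as stated. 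You instead differentiate the \emph{backward} relation $y_n(\omega)=g_{\omega_0}\bigl(y_{n-1}(\sigma\omega)\bigr)$ and iterate. For the second derivative both routes are equivalent and yield the stated constant $\alpha_1(\alpha_1+1)2^{\alpha_1}$.

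\medskip
\noindent\textbf{The issue with the third derivative.} For $y_n'''$ your formula
\[
g_u'''=-T_u'''(g_u)(g_u')^{4}+3\bigl(T_u''(g_u)\bigr)^{2}(g_u')^{5}
\]
introduces an additional $(T_u'')^{2}$ contribution that the forward computation does not produce. After your reduction $y_j^{2\omega_{n-j}-2}\le y_j^{\omega_{n-j}-2}$ this extra piece is absorbed into the second sum, but with constant $3\bigl(\alpha_1(\alpha_1+1)2^{\alpha_1}\bigr)^{2}$, not $(1-\alpha_0^{2})\alpha_1 2^{\alpha_1}$. Hence your method yields
\[
|y_n'''|\le 3\alpha_1(\alpha_1+1)2^{\alpha_1}\,y_n'\sum_j y_j^{\omega_{n-j}-1}|y_j''|
+\Bigl[(1-\alpha_0^{2})\alpha_1 2^{\alpha_1}+3\bigl(\alpha_1(\alpha_1+1)2^{\alpha_1}\bigr)^{2}\Bigr]y_n'\sum_j y_j^{\omega_{n-j}-2}(y_j')^{2},
\]
which is a correct bound of the required form but with a strictly larger constant in the second sum. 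Your sentence ``the precise constants in the statement follow from keeping track of these bounds'' is therefore inaccurate for the third-derivative estimate; to obtain the constants \emph{as written} you would need to switch to the forward relation and telescope the ratio $y^{(i)}/y'$, which cancels the $(T'')^{2}$ term. For the subsequent application (summability in Lemma~\ref{forA1}) only the form of the bound matters, so your argument is adequate there, but not for the lemma as literally stated.
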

\begin{proof}
Since 
$y_{n-1}(\sigma\omega)=y_n(\omega)(1+(2y_n(\omega))^{\omega_0})$ by taking consequent derivatives of both sides we have 
$$
y_{n-1}'(\sigma\omega)=y_n'(\omega)\left[1+(1+\omega_0)(2y_n(\omega))^{\omega_0}\right],
$$
$$
\begin{aligned}
y_{n-1}''(\sigma\omega)=y_n''(\omega)\left[1+(1+\omega_0)(2y_n(\omega))^{\omega_0}\right]\\
+(y_n'(\omega))^2\omega_0(1+\omega_0)2^{\omega_0}y_n(\omega)^{\omega_0-1},
\end{aligned}$$
$$
\begin{aligned}
y_{n-1}'''(\sigma\omega)=y_n'''(\omega)\left(1+(1+\omega_0(2y_n(\omega))^{\omega_0}\right)\\
+3y_n''(\omega)y_n'(\omega)\omega_0(1+\omega_0)2^{\omega_0}y_n(\omega)^{\omega_0-1}\\
-(y_n'(\omega))^3\omega_0(\omega_0^2-1)2^{\omega_0}y_n(\omega)^{\omega_0-2},
\end{aligned}
$$
which imply 
\begin{equation}\label{eq:y2prime}
\frac{y_{n-1}''(\sigma\omega)}{y_{n-1}'(\sigma\omega)} =\frac{y_{n}''(\omega)}{y_{n}'(\omega)}+ y_n'(\omega)\frac{\omega_0(1+\omega_0)2^{\omega_0}y_n(\omega)^{\omega_0-1}}{1+(1+\omega_0)(2y_n(\omega))^{\omega_0}}
\end{equation}
and 
\begin{equation}\label{y3prime}
\begin{aligned}
\frac{y_{n-1}'''(\sigma\omega)}{y_n'(\sigma\omega)}=\frac{y_n'''(\omega)}{y_n'(\omega)}
+3y_n''(\omega)y_n(\omega)^{\omega_0-1}
\frac{\omega_0(1+\omega_0)2^{\omega_0}}{\left(1+(1+\omega_0)(2y_n(\omega))^{\omega_0}\right)}\\
-(y_n'(\omega))^2y_n(\omega)^{\omega_0-2}+
\frac{\omega_0(\omega_0^2-1)2^{\omega_0}}{\left(1+(1+\omega_0)(2y_n(\omega))^{\omega_0}\right)}.
\end{aligned}
\end{equation}
Using \eqref{eq:y2prime} and \eqref{y3prime}, for $i=2, 3$ we obtain:
\begin{equation}\label{telescope}
-\frac{y_n^{(i)}(\omega)}{y_n'(\omega)}=\sum_{j=1}^n\frac{y_{j-1}^{(i)}(\sigma^{n-j+1}\omega)}{y_{j-1}'(\sigma^{n-j+1}\omega)}-\frac{y_j^{(i)}(\sigma^{n-j}\omega)}{y_j'(\sigma^{n-j}\omega)}.
\end{equation}
Using \eqref{telescope} and the fact that for any $j$ we have $\alpha_0\le \omega_j\le \alpha_1$ finishes the proof. 
\end{proof}
\begin{lemma}\label{forA1}
\text{ }
\begin{enumerate}
\item There exists $C>0$ such that 
$$
\int_{\Omega}|g_{z, \omega}^{'}|d\PP_\eps(\omega)\le C n^{-1-\frac{1}{\alpha_0}}[\log n]^{\frac{\alpha_0+1}{\alpha_0}};
$$ 
$$
\int_{\Omega}|g_{z, \omega}^{''}|d\PP_\eps(\omega)\le C n^{-1-\frac{1}{\alpha_1}}[\log n]^{\frac{\alpha_0+1}{\alpha_0}}.
$$ 
\item Moreover, there exists $M>0$ such that 
$$\displaystyle{\sup_{\eps\in V}\sum_{z\in Z}\sup_{x\in X}\int_{\Omega}|g_{z, \omega}^{(i)}|d\PP_\eps(\omega)\le M}, \text{ for } i=1, 2, 3.$$
\end{enumerate}
\end{lemma}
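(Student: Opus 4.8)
The plan is to reduce all three bounds to the cylinder--length estimate of Lemma~\ref{Ilength}, using bounded distortion and the telescoping identities of Lemma~\ref{lem:yiprime}. Recall that for $z\in Z$ one has $g_{z,\omega}=g_0\circ g_{u_1}\circ\cdots\circ g_{u_{|z|-1}}$ with $g_0$ affine of slope $\tfrac12$, so that $g_{z,\omega}(\Delta)$ is an interval of length $\tfrac12\bigl(x_{|z|-1}(\omega)-x_{|z|}(\omega)\bigr)$ (up to an $O(1)$ shift of the index, which only affects constants); hence, by stationarity of the i.i.d.\ environment and Lemma~\ref{Ilength},
\[
\int_\Omega|g_{z,\omega}(\Delta)|\,d\PP_\eps(\omega)=\tfrac12\int_\Omega\bigl(x_{|z|-1}(\omega)-x_{|z|}(\omega)\bigr)\,d\PP_\eps(\omega)\le C\,|z|^{-1-1/\alpha_0}[\log|z|]^{(\alpha_0+1)/\alpha_0}.
\]
The heart of the argument is the pointwise-in-$\omega$ estimate $\sup_x|g^{(i)}_{z,\omega}(x)|\le C_i\,|g_{z,\omega}(\Delta)|$ for $i=1,2,3$, valid on a ``good'' event, together with a crude bound (polynomial in $|z|$) on the exceptional event, whose probability is stretched-exponentially small by Corollary~\ref{roldstuff}; once this holds, integrating against $\PP_\eps$ gives part~(1) (in fact with exponent $1+1/\alpha_0$ throughout, which is stronger than the stated $1+1/\alpha_1$ for $g''$), and part~(2) follows by summing over $Z$, which contains boundedly many words of each length.

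For $i=1$ the pointwise estimate is immediate. Since $g_0$ is affine it does not change ratios of derivatives, so Lemma~\ref{lem:weakdist} (applied to the word with composition $g_0\circ g_{u_1}\circ\cdots\circ g_{u_j}$) yields bounded distortion of $g_{u_1}\circ\cdots\circ g_{u_j}$ on $\Delta$ uniformly in $j$, the parameters and $\eps$; in particular $g'_{z,\omega}(x)/g'_{z,\omega}(y)\le D$ for all $x,y\in\Delta$, and the mean value theorem gives $\sup_x|g'_{z,\omega}(x)|\le D\,|g_{z,\omega}(\Delta)|/|\Delta|=2D\,|g_{z,\omega}(\Delta)|$. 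Integrating and using the display above proves the first estimate of part~(1).

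For $i=2$ I would telescope \eqref{eq:y2prime} along the orbit as in \eqref{telescope}, writing $g''_{z,\omega}/g'_{z,\omega}=y''_{|z|-1}/y'_{|z|-1}$ as a sum whose generic term is bounded by $C\,|y'_j(\cdot)|\,y_j(\cdot)^{\omega-1}$ (all quantities at the appropriate shift, $\omega$ standing for one of the environment parameters in $[\alpha_0,\alpha_1]$). Here $|y'_j|$ is, by the same bounded distortion, at most a constant times the corresponding cylinder length, which, by the recursion $T_u(x)=x(1+(2x)^u)$ exploited in the proof of Lemma~\ref{Ilength}, is at most $2^{\alpha_0}x_j(\cdot)^{1+\alpha_0}$; and $y_j^{\omega-1}=y_j^{-(1-\omega)}\le x_j^{-(1-\alpha_0)}$, which is controlled by the \emph{deterministic} lower bound $x_j(\cdot)\gtrsim j^{-1/\alpha_0}$ of \eqref{changeq}--\eqref{eq:coarsest}. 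On the event where the \emph{probabilistic} upper bound $x_j(\cdot)\le 2c^{-1/\alpha_0}j^{-1/\alpha_0}(\log j)^{1/\alpha_0}$ of Corollary~\ref{roldstuff} is available, the two powers combine to give a generic term $\lesssim x_j^{2\alpha_0}\lesssim j^{-2}(\log j)^2$, so the series converges uniformly in $\omega$ (on the good event) and in $\eps$; hence $|g''_{z,\omega}|\le(\text{const})\,|g'_{z,\omega}|\le C_2\,|g_{z,\omega}(\Delta)|$ on that event, and one concludes as for $i=1$ after absorbing the exceptional event through the stretched-exponential tail of Corollary~\ref{roldstuff}, exactly as in the proof of Lemma~\ref{Ilength}. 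The third derivative is handled identically using the formula for $|y'''_n|$ in Lemma~\ref{lem:yiprime}: its two extra sums $\sum_j y_j^{\omega-1}|y''_j|$ and $\sum_j y_j^{\omega-2}(y'_j)^2$ have generic terms $\lesssim x_j^{2\alpha_0}$ and $\lesssim x_j^{3\alpha_0}$, so also converge uniformly, yielding $\int_\Omega\sup_x|g'''_{z,\omega}|\,d\PP_\eps\le C\,|z|^{-1-1/\alpha_0}[\log|z|]^{(\alpha_0+1)/\alpha_0}$, which is what part~(2) needs for $i=3$.

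Part~(2) is then bookkeeping: summing the estimates of part~(1) and the $g'''$ bound over $z\in Z$ gives $\sum_{z\in Z}\sup_{\eps\in V}\sup_x\int_\Omega|g^{(i)}_{z,\omega}|\,d\PP_\eps\le C\sum_{n\ge1}n^{-1-1/\alpha_1}[\log n]^{(\alpha_0+1)/\alpha_0}<\infty$ for $i=1,2,3$, and the constant $M$ is independent of $\eps$ because Lemma~\ref{lem:weakdist}, Corollary~\ref{roldstuff}, Lemma~\ref{Ilength} and the comparisons \eqref{changeq}--\eqref{eq:coarsest} are all uniform in $\eps$. I expect the main obstacle to be the uniform control of the telescoping series for $g''$ and $g'''$: one must combine the deterministic lower bound and the only-with-high-probability upper bound on $x_j$ so that the exponents add up to a summable power of $j$, deal with the exceptional environments (which are nested over $j$) via the stretched-exponential tail, and verify that the resulting constants depend on neither $z$ nor $\eps$; everything else is a routine reassembly of estimates already established.
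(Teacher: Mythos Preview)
Your strategy for $i=1$ matches the paper's: mean value theorem plus bounded distortion (Lemma~\ref{lem:weakdist}) give $|g'_{z,\omega}| \le 2D(x'_{n-1}(\omega)-x'_n(\omega))$, and then Lemma~\ref{Ilength} handles the integral. The reduction to the telescoping identities of Lemma~\ref{lem:yiprime} for $i=2,3$ is also the right move.

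The gap is in your good/bad event argument for $i=2,3$. You claim the exceptional set has stretched-exponentially small probability in $n=|z|$, but this is not so: the telescoping sum involves $x_j(\sigma^{n-j}\omega)$ for \emph{every} $j=1,\ldots,n$, and to use Corollary~\ref{roldstuff} at each step the good event must be the intersection $\bigcap_j \{n_1(\sigma^{n-j}\omega)\le j\}$. By the union bound its complement has probability at most $\sum_{j\ge 1}Ce^{-uj^v}$, a constant independent of $n$. On this non-negligible bad set the crude polynomial bound is not absorbed, and the argument does not close. (The analogy with Lemma~\ref{Ilength} is misleading: there the exceptional event is the single event $\{n_1(\omega)>n\}$, which genuinely decays in $n$.)

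The paper avoids this entirely by never invoking the probabilistic upper bound inside the telescoping sum. It uses only the deterministic inequalities from \eqref{eq:coarsest}: $y'_j(\sigma^{n-j}\omega)\le Dx_j(\sigma^{n-j}\omega)\le Cj^{-1/\alpha_1}$ and $y_j(\sigma^{n-j}\omega)^{\omega_{n-j}-1}\le Cj^{(1-\alpha_0)/\alpha_0}$, valid for every $\omega$. Substituting into Lemma~\ref{lem:yiprime} yields the pointwise bound $|y''_n(\omega)|\le Cy'_n(\omega)\,n^{1/\alpha_0-1/\alpha_1}$ with no exceptional set, and one then integrates $y'_n$ against $\PP_\eps$ using the already-proven first estimate; this is the only place Corollary~\ref{roldstuff} enters. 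That is exactly why the exponent for $\int|g''|\,d\PP_\eps$ is $-1-1/\alpha_1$ rather than the $-1-1/\alpha_0$ you hoped for. The third derivative follows the same pattern; summability at the end uses the standing assumption $\alpha_1<2\alpha_0$.

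Your sharper estimate $|y'_j|\le Cx_j^{1+\alpha_0}$ is correct, and combined with the \emph{deterministic} upper bound $x_j\le Cj^{-1/\alpha_1}$ it gives a generic term $x_j^{2\alpha_0}\le Cj^{-2\alpha_0/\alpha_1}$, summable under $\alpha_1<2\alpha_0$ --- this is a legitimate (and slightly cleaner) alternative to the paper's bookkeeping. The error is only in substituting the probabilistic bound for the deterministic one.
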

\begin{proof}
By the Mean Value Theorem, there exists $\xi\in \Delta$ such that 
$
\frac 1 2 g_{z, \omega}'(\xi)= x_{n-1}'(\omega)- x_{n}'(\omega).
$ Therefore, by Lemma  \ref{lem:weakdist} for $x\in \Delta$ we have 
\begin{equation}\label{eq:mvt}
g_{z, \omega}'(x)\le 2 D(x_{n-1}'(\omega)- x_{n}'(\omega)).
\end{equation}
Thus, by Lemma \ref{Ilength}
\begin{equation}\label{derleg}
\int_{\Omega}|g_{z, \omega}'|d\PP_\eps \le 2D E_{\eta_\eps}[x'_{n-1}(\omega)-x'_n(\omega)] \le C [\log n]^{\frac{\alpha_0+1}{\alpha_0}}{n^{-\frac{1}{\alpha_0}-1}}.
\end{equation}

Again by the Mean Value Theorem and Lemma \ref{lem:weakdist} and \eqref{eq:coarsest} $y_j'(\sigma^{n-j}\omega)\le Dx_{j}(\sigma^{n-j}\omega)\le Cj^{-1/\alpha_1}$. Also by \eqref{eq:coarsest} we have 
$y_j(\sigma^{n-j}\omega)^{\omega_{n-j}-1}\le C j^{\frac{1-\alpha_0}{\alpha_0}}$. Substituting this into  
the first item of Lemma \ref{lem:yiprime} implies  
$$
|y_n''(\omega)|\le Cy_n'(\omega)\sum_{j=1}^n j^{-\frac{1}{\alpha_1}+\frac{1}{\alpha_0}-1} \le C y_n'(\omega)n^{\frac{1}{\alpha_0}-\frac{1}{\alpha_1}}.
$$
Therefore, by Corollary \ref{roldstuff} and \eqref{derleg}, we have
$$
\begin{aligned}
&\int_{\Omega}|y_n''(\omega)|d\PP_\eps(\omega) \le C\int_{\Omega} y_n'(\omega)n^{\frac{1}{\alpha_0}-\frac{1}{\alpha_1}}d\PP_\eps(\omega)= C n^{\frac{1}{\alpha_0}-\frac{1}{\alpha_1}}\int_{\Omega} y_n'(\omega)d\PP_\eps(\omega)\\
&\le C n^{\frac{1}{\alpha_0}-\frac{1}{\alpha_1}}[\log n]^{\frac{\alpha_0+1}{\alpha_0}}{n^{-\frac{1}{\alpha_0}-1}}=C n^{-1-\frac{1}{\alpha_1}}[\log n]^{\frac{\alpha_0+1}{\alpha_0}}.
\end{aligned}
$$
Similarly, using the second item of Lemma \ref{lem:weakdist} and \eqref{eq:coarsest}  we have 
$$
\begin{aligned}
|y_n'''(\omega)|
&\le C_1y_n'(\omega)\sum_{j=1}^n j^{\frac{1}{\alpha_0}-1}|y_n''(\sigma^{n-j}\omega)| \\
&+C_2y_n'(\omega)\sum_{j=1}^n j^{\frac{2}{\alpha_0}-1}\cdot j^{-\frac{2}{\alpha_1}} \\
\le C_1n^{-\frac{1}{\alpha_1}}&\sum_{j=1}^n j^{\frac{1}{\alpha_0}-1}|y_j''(\sigma^{n-j}\omega)| 
+C_2y_n'(\omega)\sum_{j=1}^n j^{\frac{2}{\alpha_0}-\frac{2}{\alpha_1}-1}\\
\le C_1n^{-\frac{1}{\alpha_1}}&\sum_{j=1}^n j^{\frac{1}{\alpha_0}-1}|y_j''(\sigma^{n-j}\omega)| 
+C_2y_n'(\omega)n^{\frac{2}{\alpha_0}-\frac{2}{\alpha_1}}.
\end{aligned}
$$
Finally, 
$$
\begin{aligned}
&\int_\Omega|y_n'''(\omega)|d\PP_\eps(\omega)\\
&\le C_1n^{-\frac{1}{\alpha_1}}\sum_{j=1}^n j^{\frac{1}{\alpha_0}-1}\int_\Omega|y_n''(\sigma^{n-j}\omega)|d\PP_\eps(\omega) 
+C_2n^{\frac{2}{\alpha_0}-\frac{2}{\alpha_1}}\int_\Omega y_n'(\omega)d\PP_\eps(\omega)\\ 
&\le C_1n^{-\frac{1}{\alpha_1}}\sum_{j=1}^n j^{\frac{1}{\alpha_0}-1}\cdot j^{-1-\frac{1}{\alpha_1}}[\log n]^{\frac{\alpha_0+1}{\alpha_0}} +C_2n^{-1+\frac{1}{\alpha_0}-\frac{2}{\alpha_1}}[\log n]^{\frac{\alpha_0+1}{\alpha_0}}.
\end{aligned}
$$
This finishes the proof since $2\alpha_0 >\alpha_1$.
\end{proof}

\noindent{\bf Verifying assumptions \eqref{ha3} and \eqref{ha3'}}. 
Note that the smoothness of $\Phi$  and $\hat\PP_\eps$ together with Lebesgue differentiation theorem imply the existence and continuity of the derivatives $\partial_{\eps}\hat\psi_z(\eps, x),$ $\partial_x\hat\psi_z(\eps, x),$ $\partial_x\partial_{\eps}\hat\psi_z(\eps, x),$ $\partial_\eps\partial_x\hat\psi_z(\eps, x).$ 

\begin{lemma}
For any  $\Phi\in C^2(\Delta)$ and $i=0, 1$ 
$$\sum_{z\in Z}\sup_{\eps\in V}  \sup_{x\in\Delta}|
\partial_\eps \hat\psi_z^{(i)}(\eps,x)| <\infty.$$
\end{lemma}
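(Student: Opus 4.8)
The plan is to differentiate $\hat\psi_z$ under the integral sign, isolate the logarithmic derivative of the density product, and then reduce the whole estimate to the polynomial decay bounds already established in Lemma~\ref{forA1}. Observe first that in the present example $\ell=1$, so a word $z\in Z$ of length $|z|=n+1$ is necessarily $z=z_0z_1\cdots z_n$ with $z_0$ the label of the branch $2x-1$ on $\Delta$ and $z_1,\dots,z_n$ the label of the branch on $[0,\tfrac12]$; consequently $\sum_{z\in Z}$ is a sum over $n\ge 0$, all inverse branches are increasing (so $|g'|=g'$), the inverse branch $g_{z,\hat\omega}=g_0\circ g_{u_1}\circ\cdots\circ g_{u_n}$ depends only on $(u_1,\dots,u_n)=:u$, and since $\hat\PP_\eps=\eta_\eps^\NN=(\rho_\eps\,du)^\NN$,
\[
\hat\psi_z(\eps,x)=\int_{I^n}\bigl(\Phi\circ g_{z,u}\bigr)(x)\,g_{z,u}'(x)\,\prod_{l=1}^n\rho_\eps(u_l)\,du_1\cdots du_n .
\]
For $n=0$ this reduces to $\tfrac12\Phi(g_0(x))$, which is independent of $\eps$, so that term of the sum vanishes and we may assume $n\ge 1$.

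Next I would differentiate. The $C^1$-dependence of $\rho_\eps$ on $\eps$, together with the Lebesgue differentiation theorem (as already recorded just before the statement), legitimizes differentiation under the integral, and the product rule in $u$ gives $\partial_\eps\prod_l\rho_\eps(u_l)=\bigl(\sum_{i=1}^n \partial_\eps\rho_\eps(u_i)/\rho_\eps(u_i)\bigr)\prod_l\rho_\eps(u_l)$; hence
\[
\partial_\eps\hat\psi_z(\eps,x)=\int_{I^n}\bigl(\Phi\circ g_{z,u}\bigr)(x)\,g_{z,u}'(x)\,\Bigl(\sum_{i=1}^n\frac{\partial_\eps\rho_\eps(u_i)}{\rho_\eps(u_i)}\Bigr)\prod_{l=1}^n\rho_\eps(u_l)\,du .
\]
Differentiating also in $x$, with $\partial_x\bigl(\Phi\circ g_{z,u}\cdot g_{z,u}'\bigr)=\Phi'\circ g_{z,u}\,(g_{z,u}')^2+\Phi\circ g_{z,u}\,g_{z,u}''$ and the commutation $\partial_\eps\hat\psi_z^{(1)}=\partial_\eps\partial_x\hat\psi_z=\partial_x\partial_\eps\hat\psi_z$, yields the analogous formula for $\partial_\eps\hat\psi_z^{(1)}$ with $\Phi\circ g_{z,u}\cdot g_{z,u}'$ replaced by $\Phi'\circ g_{z,u}\,(g_{z,u}')^2+\Phi\circ g_{z,u}\,g_{z,u}''$.

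Now I would bound pointwise. By Lemma~\ref{lem:rhoeps}, each $\partial_\eps\rho_\eps/\rho_\eps$ is bounded, in absolute value, by a constant $C$ independent of $\eps\in V$ and of $u$, so $\bigl|\sum_{i=1}^n\partial_\eps\rho_\eps(u_i)/\rho_\eps(u_i)\bigr|\le nC$; moreover every inverse branch in this example satisfies $0<g_{z,u}'\le\tfrac12$, whence $(g_{z,u}')^2\le g_{z,u}'$. Since the remaining integrand depends only on $u_1,\dots,u_n$, integrating it against $\prod_{l=1}^n\rho_\eps(u_l)\,du$ is the same as integrating against $\PP_\eps$. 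Combining, we obtain, uniformly in $x\in\Delta$ and $\eps\in V$,
\[
|\partial_\eps\hat\psi_z(\eps,x)|\le nC\|\Phi\|_{C^0}\!\int_\Omega g_{z,\omega}'(x)\,d\PP_\eps(\omega),\qquad
|\partial_\eps\hat\psi_z^{(1)}(\eps,x)|\le nC\|\Phi\|_{C^1}\!\int_\Omega\!\bigl(g_{z,\omega}'(x)+|g_{z,\omega}''(x)|\bigr)d\PP_\eps(\omega).
\]

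It remains to sum over $n$. By Lemma~\ref{forA1}(1) (uniformly in $x\in\Delta$) one has, for large $n$, $\int_\Omega g_{z,\omega}'\,d\PP_\eps\le C\,n^{-1-1/\alpha_0}[\log n]^{(\alpha_0+1)/\alpha_0}$ and $\int_\Omega|g_{z,\omega}''|\,d\PP_\eps\le C\,n^{-1-1/\alpha_1}[\log n]^{(\alpha_0+1)/\alpha_0}$, while the finitely many small-$n$ terms are individually finite (e.g.\ by Lemma~\ref{forA1}(2)); therefore
\[
\sum_{z\in Z}\sup_{\eps\in V}\sup_{x\in\Delta}|\partial_\eps\hat\psi_z^{(i)}(\eps,x)|\le\text{const}+C'\|\Phi\|_{C^1}\sum_{n}n\bigl(n^{-1-1/\alpha_0}+n^{-1-1/\alpha_1}\bigr)[\log n]^{(\alpha_0+1)/\alpha_0}<\infty,
\]
the series converging because $\alpha_1<1$ (as $[\alpha_0,\alpha_1]\subset(0,1)$) forces $1/\alpha_1>1$, so the general term is $O\bigl(n^{-1/\alpha_1}[\log n]^{(\alpha_0+1)/\alpha_0}\bigr)$ with summable exponent, and likewise $1/\alpha_0>1$ handles the $i=0$ part. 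The only real obstacle is the factor $n$ produced by the product rule on the $n$-fold product $\prod_{l=1}^n\rho_\eps(u_l)$: it is harmless precisely because Lemma~\ref{forA1} yields decay of order $n^{-1-1/\alpha_j}$ with $1/\alpha_j>1$, leaving a margin to absorb it, and because Lemma~\ref{lem:rhoeps} keeps that factor uniformly of size exactly $n$ rather than worse. Everything else — differentiation under the integral, the estimate $(g_{z,u}')^2\le g_{z,u}'$, and the passage between $I^n$-integrals and $\PP_\eps$-integrals — is routine.
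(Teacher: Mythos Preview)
Your proof is correct and follows essentially the same approach as the paper: differentiate the product density $\prod_l\rho_\eps(u_l)$ term by term (which you write as a logarithmic derivative, the paper as $\sum_j\prod_{i\neq j}\rho_\eps(\omega_i)\partial_\eps\rho_\eps(\omega_j)$), invoke Lemma~\ref{lem:rhoeps} to extract the factor $nC$, and then apply the decay estimates of Lemma~\ref{forA1}(1) to sum over $n$. Your additional remarks --- the $n=0$ term being $\eps$-independent, the simplification $(g_{z,u}')^2\le g_{z,u}'$, and the explicit identification of the $I^n$-integral with the $\PP_\eps$-integral --- make explicit steps the paper leaves implicit, but the strategy is the same.
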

\begin{proof}
For any $\eps \in V$ by the definition of $\hat\psi_z(\eps, \Phi)$, the regularity of $\PP_\eps$ and Lebesgue differentiation theorem we have\footnote{Note that the family of measures $\eta_\eps$ in this example belongs to family (3) of Subsection \ref{natural}.}   
\begin{equation*}
\partial_\eps\hat\psi_z(\eps,\Phi)(x)=\sum_{j=1}^{n}\int_{I^n}[\Phi\circ g_{z,\hat\omega} |g_{z,\hat\omega}'|](x) \Pi_{i\not=j}^n\rho_{\eps}(\omega_i)\partial_\eps\rho_{\eps}(\omega_j) d(\bar\omega_n).
\end{equation*}
Thus, by Lemma \ref{lem:rhoeps} for any $x\in \Delta$
\begin{equation}
\begin{split}\label{pdeps}
|\partial_\eps\hat\psi_z(\eps,\Phi)(x)|\le & nC{\|\Phi\|}_{\infty}\sup_{x\in(1/2, 1]}\int_{\hat\Omega} |g_{z,\hat\omega}'(x)| d\hat\PP_\eps(\hat\omega)\\ 
&\le C n^{-\frac{1}{\alpha_0}}[\log n]^{\frac{\alpha_0+1}{\alpha_0}},
\end{split}
\end{equation}
where in the last inequality we have used the first item of Lemma \ref{forA1}.
Since $|z|=n+1$, summing over $n$ in \eqref{pdeps} completes the proof for $i=0$. 
For $i=1$  again by definition of $\hat\psi_z(\eps, \Phi)$  and Lebesgue differentiation theorem we have
\begin{align*}
& |\partial_\eps\hat\psi_z'(\eps,\Phi)(x)| \\
 &\le \sum_{j=1}^{n}{\|\Phi\|}_{C^1} \int_{I^n}(|g_{z,\hat\omega}'|^2+|g_{z,\hat\omega}''|)(x) \prod_{i\not=j}^n\rho_{\eps}(\omega_i)\partial_\eps\rho_{\eps}(\omega_j) d(\bar\omega_n).
\end{align*}
Hence, by Lemma \ref{lem:rhoeps}, we have 
\begin{align*}
|\partial_\eps\hat\psi_z'(\eps,\Phi)(x)| \le n {\|\Phi\|}_{C^1}\sup_{x\in(1/2, 1]}\int_{\Omega}(|g_{z,\hat\omega}'|^2+|g_{z,\hat\omega}''|)(x) d\PP_\eps(\omega) \\
\le Cn^{-\frac{1}{\alpha_1}}[\log n]^{\frac{\alpha_0+1}{\alpha_0}},
\end{align*}
where in the final estimate we have used the first item of Lemma \ref{forA1}. Since $|z|=n+1$ summing over $n$ finishes the proof.
\end{proof}

\begin{lemma}
For any  $\Phi\in C^1(\Delta)$ and $i=0, 1$ 
$$\sum_{z\in Z}\sup_{\eps\in V}  \sup_{x\in\Delta}|
 \hat\psi_z^{(i)}(\eps,x)| <\infty.$$
\end{lemma}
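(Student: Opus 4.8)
The plan is to reduce the lemma to the estimates on $\int_\Omega|g_{z,\omega}'|\,d\PP_\eps$ and $\int_\Omega|g_{z,\omega}''|\,d\PP_\eps$ already proved in the first item of Lemma~\ref{forA1}, and then to sum the resulting bounds over $z\in Z$; for this family the sum over $z\in Z$ is simply a sum over the return time $n+1=|z|$, since there is exactly one admissible word of each length. Throughout I would use that $\hat T_{\hat\omega}'\ge 2$, so that $|g_{z,\hat\omega}'|\le 1$.

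First I would treat $i=0$. Since $\Phi\in C^1(\Delta)$ is bounded, \eqref{hpsiz} and the first item of Lemma~\ref{forA1} give
\begin{align*}
|\hat\psi_z(\eps,x)|&\le\|\Phi\|_{C^0}\sup_{x\in\Delta}\int_\Omega|g_{z,\omega}'(x)|\,d\PP_\eps(\omega)\\
&\le C\|\Phi\|_{C^0}\,n^{-1-1/\alpha_0}[\log n]^{\frac{\alpha_0+1}{\alpha_0}},
\end{align*}
and since $\alpha_0\in(0,1)$ the series $\sum_{z\in Z}$ of the right-hand side converges. For $i=1$, I would differentiate under the integral sign,
\[
\partial_x\hat\psi_z(\eps,x)=\int_{\hat\Omega}\Big[(\Phi'\circ g_{z,\hat\omega})\,g_{z,\hat\omega}'\,|g_{z,\hat\omega}'|+(\Phi\circ g_{z,\hat\omega})\,\partial_x|g_{z,\hat\omega}'|\Big](x)\,d\hat\PP_\eps(\hat\omega),
\]
and then, using $|g_{z,\hat\omega}'|\le1$, $\bigl|\partial_x|g_{z,\hat\omega}'|\bigr|=|g_{z,\hat\omega}''|$, the first item of Lemma~\ref{forA1} and $\alpha_0\le\alpha_1$,
\begin{align*}
|\partial_x\hat\psi_z(\eps,x)|&\le\|\Phi\|_{C^1}\sup_{x\in\Delta}\int_\Omega\bigl(|g_{z,\omega}'|+|g_{z,\omega}''|\bigr)\,d\PP_\eps(\omega)\\
&\le C\|\Phi\|_{C^1}\,n^{-1-1/\alpha_1}[\log n]^{\frac{\alpha_0+1}{\alpha_0}}.
\end{align*}
Since $\alpha_1\in(0,1)$, the series $\sum_{z\in Z}$ of this converges as well, and the two estimates together are exactly \eqref{ha3'}.

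I do not expect a real obstacle: the statement is essentially a corollary of Lemma~\ref{forA1}, and is in fact lighter than the preceding lemma on $\partial_\eps\hat\psi_z$, since there is no extra factor $n$ from a sum over the perturbed coordinate and no need for Lemma~\ref{lem:rhoeps}. The only point deserving a word of care is the interchange of $\partial_x$ with the integral over $\hat\Omega$, together with the existence and joint continuity in $(\eps,x)$ implicitly required by \eqref{ha3'}; both follow from the dominated convergence theorem with the integrable majorants displayed above, exactly as in the paragraph preceding the lemma, where the analogous statements were verified for the $\eps$-derivatives.
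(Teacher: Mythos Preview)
Your proposal is correct and follows essentially the same approach as the paper's proof: bound $|\hat\psi_z|$ and $|\partial_x\hat\psi_z|$ pointwise via $\|\Phi\|_{C^1}$ times the integrals of $|g_{z,\omega}'|$ and $|g_{z,\omega}''|$, invoke the estimates of Lemma~\ref{forA1}, and sum over $n=|z|-1$. The only cosmetic differences are that the paper writes $|g_{z,\hat\omega}'|^2+|g_{z,\hat\omega}''|$ where you write $|g_{z,\hat\omega}'|+|g_{z,\hat\omega}''|$ (equivalent since $|g_{z,\hat\omega}'|\le1$), and for $i=0$ the paper invokes the second item of Lemma~\ref{forA1} directly rather than summing the first item.
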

\begin{proof} For $i=0$ the proof follows by direct application of the second item of  \ref{forA1}. For $i=1$ by definition of  $ \hat\psi_z(\eps,x)$,  Lebesgue differentiation theorem, the first and the second items of  \ref{forA1} we have 
$$
| \hat\psi_z'(\eps,x)| \le {\|\Phi\|}_{C^1} \int_\Omega (|g_{z,\hat\omega}'|^2+|g_{z,\hat\omega}''|)(x) d\PP_\eps(\omega) \le C n^{-1-\frac{1}{\alpha_1}}[\log n]^{\frac{\alpha_0+1}{\alpha_0}}
$$
Summing over $n$ finishes the proof. 
\end{proof}

\noindent {\bf Verifying assumptions \eqref{ha4} and \eqref{ha4'}}.
Below without lost of generality suppose that $g_{z, \omega}$ has range $(x_{n}'(\omega), x_{n-1}'(\omega)]$. This in particular implies that $|z|=n$.
We first start with two technical lemmas
\begin{lemma}\label{1plusalpha}
For any $\alpha\in (0, 1)$, $\gamma\in (0, 1+\alpha_0]$ and $x>0$  the following holds 
$$
(1+(2x)^\alpha)^\gamma\le 1+\gamma(2x)^\alpha+\frac{\gamma^2}{2}(2x)^{2\alpha}
$$ 
\end{lemma}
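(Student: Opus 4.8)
The statement is the elementary inequality $(1+t)^\gamma \le 1+\gamma t + \tfrac{\gamma^2}{2}t^2$ with $t=(2x)^\alpha>0$ and exponent $\gamma\in(0,1+\alpha_0]\subset(0,2]$ (since $\alpha_0<1$). So the plan is to forget the dynamical notation entirely and prove: for $t>0$ and $\gamma\in(0,2]$,
\[
(1+t)^\gamma \le 1+\gamma t+\tfrac{\gamma^2}{2}t^2 .
\]
First I would reduce to the one-variable problem by setting $f(t):=1+\gamma t+\tfrac{\gamma^2}{2}t^2-(1+t)^\gamma$ on $[0,\infty)$ and showing $f\ge 0$. We have $f(0)=0$, so it suffices to show $f'(t)\ge 0$ for all $t\ge 0$. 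Compute $f'(t)=\gamma+\gamma^2 t-\gamma(1+t)^{\gamma-1}=\gamma\big(1+\gamma t-(1+t)^{\gamma-1}\big)$, and since $\gamma>0$ the sign is governed by $g(t):=1+\gamma t-(1+t)^{\gamma-1}$.

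Next I would show $g(t)\ge 0$ on $[0,\infty)$. Again $g(0)=0$, and $g'(t)=\gamma-(\gamma-1)(1+t)^{\gamma-2}$. For $\gamma\le 1$ this is immediate: $-(\gamma-1)(1+t)^{\gamma-2}\ge 0$, so $g'\ge\gamma>0$. For $1<\gamma\le 2$ one has $\gamma-2\le 0$, hence $(1+t)^{\gamma-2}\le 1$, so $g'(t)\ge \gamma-(\gamma-1)=1>0$. Either way $g$ is increasing and nonnegative, hence $f'\ge 0$, hence $f\ge 0$, which is the claim. (Note only $\gamma\le 2$ is used, which is guaranteed by $\gamma\le 1+\alpha_0<2$; the hypotheses $\alpha\in(0,1)$ and $x>0$ enter merely to ensure $t=(2x)^\alpha$ is a well-defined positive real, so no positivity or range restriction on $t$ beyond $t>0$ is needed, and in fact the inequality holds for all $t\ge-1$ with $\gamma\in(0,2]$.)

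There is no real obstacle here; the only thing to be slightly careful about is the case split at $\gamma=1$ when differentiating $(1+t)^{\gamma-1}$ and $(1+t)^{\gamma-2}$, and confirming that the relevant exponent $\gamma=1+\alpha_0$ stays $\le 2$, which follows from $\alpha_0<1$. An alternative, essentially equivalent route is to use the second-order Taylor expansion of $\phi(s)=(1+t)^s$ or of $s\mapsto(1+s t)^\gamma$ with Lagrange remainder and bound the remainder term, but the monotonicity argument above is the cleanest and avoids any sign bookkeeping in the remainder.
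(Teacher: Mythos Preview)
Your proof is correct and follows essentially the same route as the paper: both differentiate once (the paper in $x$, you after the harmless substitution $t=(2x)^\alpha$) to reduce the claim to $(1+t)^{\gamma-1}\le 1+\gamma t$, then split into the cases $\gamma\le 1$ and $\gamma>1$. The only cosmetic difference is that for $\gamma\in(1,2]$ the paper closes with Bernoulli's inequality $(1+t)^{\gamma-1}\le 1+(\gamma-1)t<1+\gamma t$, whereas you differentiate once more; both are equally valid and equally short.
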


\begin{proof}
We let $\phi_1(x)=(1+(2x)^\alpha)^\gamma$ and $\phi_2(x)=1+\gamma(2x)^\alpha+\frac{\gamma^2}{2}(2x)^{2\alpha}$.
Since $\phi_1(0)=\phi_2(0)=1$, it suffices to prove that $\phi_1'(x) <\phi_2'(x)$ for $x>0$.  Direct computation implies
$$\phi_1'(x)=\gamma\alpha 2^\alpha x^{\alpha-1}(1+(2x)^{\alpha})^{\gamma-1},$$ 
$$\phi_2(x)'=\alpha\gamma2^\alpha x^{\alpha-1}(1+\gamma2^{\alpha}x^{\alpha}).$$
For $\gamma\in (0, 1]$ we have $(1+(2x)^{\alpha})^{\gamma-1}\le 1 <1+\gamma2^{\alpha}x^{\alpha}$.
In the case $\gamma\in (1, 1+\alpha_0]$ by standard argument we have $(1+(2x)^{\alpha})^{\gamma-1}<1+(\gamma-1)(2x)^{\alpha} <1+\gamma2^{\alpha}x^{\alpha}$ which finishes the proof. 
\end{proof}

Let $\bom:=\frac{(T_{1, \omega}(x)/x)^{\gamma}}{T_{1, \omega}'(x)}:=\frac{(1+(2x)^{\omega_0})^{\gamma}}{1+(\omega_0+1)(2x)^{\omega_0}}$. 
Then by Lemma \ref{1plusalpha} we have 
\begin{equation}\label{ineq:bom}
\bom \le \frac{1+\gamma(2x)^{\omega_0}+\frac{\gamma^2}{2}(2x)^{2\omega_0}}{1+(\omega_0+1)(2x)^{\omega_0}} \le 1+ \frac{\gamma^2}{2}(2x)^{2\omega_0}.
\end{equation}
\begin{lemma}\label{le:tech}
There exists a constant $\mathcal D>0$  such that for any $y\in [0, \frac12]$ 
$$y^\gamma g'_{n, \omega}(y) \le \mathcal D {[x_{n-1}(\omega)]}^{\gamma}.$$
\end{lemma}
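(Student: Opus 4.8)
The plan is to turn the weighted derivative into a telescoping product along the backward orbit and then to bound the resulting product by a convergent series that exists precisely because of the standing hypothesis $\alpha_1<2\alpha_0$. Write the relevant composition as $g_{n,\omega}=g_{u_1}\circ\cdots\circ g_{u_n}$ with $u_i=\omega_{i-1}$ (if the innermost full $\Delta$-branch $g_0$ is included in $g_{n,\omega}$ it only contributes the harmless constant $|g_0'|=\tfrac12$, so I ignore it). The basic observation is a one-step identity: for an out-branch $g_u=T_{1,\omega}^{-1}$ and $a\in[0,1]$, set $b=g_u(a)\in[0,\tfrac12]$; then $T_{1,\omega}(b)=a$, $g_u'(a)=1/T_{1,\omega}'(b)$, and
\[
a^\gamma g_u'(a)=\frac{(T_{1,\omega}(b))^\gamma}{T_{1,\omega}'(b)}=b^\gamma\cdot\frac{(T_{1,\omega}(b)/b)^\gamma}{T_{1,\omega}'(b)}=b^\gamma\,b_u(b),
\]
i.e. $(\text{input})^\gamma\,g_u'(\text{input})=(\text{output})^\gamma\,b_u(\text{output})$, where $b_u$ denotes the quantity $\bom$ read with the parameter $u$ in place of $\omega_0$.

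Next I would iterate. Set $v_{n+1}=y$ and $v_k=g_{u_k}\circ\cdots\circ g_{u_n}(y)\in[0,\tfrac12]$ for $k=n,n-1,\dots,1$, so that $v_k=g_{u_k}(v_{k+1})$ and $v_1=g_{n,\omega}(y)$. Applying the one-step identity with $a=v_{k+1}$, $b=v_k$ and multiplying over $k=1,\dots,n$, the chain rule collapses $\prod_k g_{u_k}'(v_{k+1})$ to $g_{n,\omega}'(y)$, the intermediate powers $v_2^\gamma,\dots,v_n^\gamma$ cancel, and one is left with
\[
y^\gamma g_{n,\omega}'(y)=[g_{n,\omega}(y)]^\gamma\prod_{k=1}^n b_{u_k}(v_k).
\]
For the first factor, $g_{n,\omega}$ maps $[0,\tfrac12]$ monotonically into $[0,x_{n-1}(\omega)]$ by the definition of the $x_n(\omega)$ and the domination \eqref{changeq}, so $[g_{n,\omega}(y)]^\gamma\le[x_{n-1}(\omega)]^\gamma$. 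For the second factor, each $v_k\le\tfrac12$, so \eqref{ineq:bom} gives $b_{u_k}(v_k)\le 1+\tfrac{\gamma^2}{2}(2v_k)^{2u_k}\le 1+\tfrac{\gamma^2}{2}(2v_k)^{2\alpha_0}$; moreover $v_k$ is an $(n-k+1)$-fold inverse-branch image of a point of $[0,\tfrac12]$, so the domination $x_j(\amin)\le x_j(\omega)\le x_j(\amax)$ of \eqref{changeq} together with the asymptotics \eqref{eq:coarsest} yields $v_k\le C(n-k+1)^{-1/\alpha_1}$ with $C$ independent of $\omega$, $n$ and $y$. Hence
\[
\prod_{k=1}^n b_{u_k}(v_k)\le\exp\Big(\tfrac{\gamma^2}{2}\sum_{k=1}^n(2v_k)^{2\alpha_0}\Big)\le\exp\Big(\tfrac{\gamma^2}{2}(2C)^{2\alpha_0}\sum_{m\ge1}m^{-2\alpha_0/\alpha_1}+O(1)\Big)=:\mathcal D,
\]
where the $O(1)$ absorbs the finitely many indices $m=n-k+1$ with $2Cm^{-1/\alpha_1}>1$ (for those, $b_{u_k}(v_k)\le1+\tfrac{\gamma^2}{2}$ trivially since $2v_k\le1$), and the series converges because $\alpha_1<2\alpha_0$. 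Combining the last two displays gives $y^\gamma g_{n,\omega}'(y)\le\mathcal D\,[x_{n-1}(\omega)]^\gamma$.

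The main obstacle is making the product $\prod_k b_{u_k}(v_k)$ bounded by a constant independent of $n$, $\omega$ and $y$: it is not enough that each $v_k$ is small, one needs the \emph{uniform} polynomial rate $v_k\lesssim (n-k+1)^{-1/\alpha_1}$ along every random itinerary, which is exactly what \eqref{changeq}--\eqref{eq:coarsest} provide by sandwiching the random sequence between the two constant-parameter sequences. Once that is in hand, convergence of $\sum_m m^{-2\alpha_0/\alpha_1}$ — hence the constancy of $\mathcal D$ — hinges only on the standing hypothesis $\alpha_1<2\alpha_0$; the hypothesis $\gamma\le1+\alpha_0$ enters only through Lemma \ref{1plusalpha}, which is what makes \eqref{ineq:bom} available for $b_u$.
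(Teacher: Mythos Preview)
Your argument is correct and follows essentially the same route as the paper: telescope $y^\gamma g_{n,\omega}'(y)$ into $[g_{n,\omega}(y)]^\gamma$ times a product of the factors $b_u$, bound each factor via \eqref{ineq:bom}, control the backward-orbit points by \eqref{changeq}--\eqref{eq:coarsest}, and use $\alpha_1<2\alpha_0$ to make $\sum_m m^{-2\alpha_0/\alpha_1}$ converge. Your treatment of the varying parameters $u_k$ and of the finitely many small indices is in fact a bit more careful than the paper's own write-up.
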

\begin{proof} 
Letting $T_{1, \sigma\omega}^{-n+1}y=T_{1, \omega_1 }^{-1}\circ \dots \circ T_{1, \omega_{n-1}}^{-1}y$ by definition of $g_{n, \omega}$ we have 
$$
y^\gamma g'_{n, \omega}(y)= \frac{y^\gamma}{(T_{1, \omega_{n-1}}\circ \dots T_{1, \omega_1}\circ T_{2, \omega_0})'(T_{2, \omega_0}^{-1}\circ T_{1, \sigma\omega}^{-n+1}y)}
$$
$$
=\frac{y^\gamma}{T_{1, \omega_{n-1}}'(T^{-1}_{1, \omega_{n-1}}y)\circ T_{1, \omega_{n-2}}'(T^{-2}_{1, \sigma^{n-2}\omega}y)\dots T_{1, \omega_{1}}'(T^{-n+1}_{1, \sigma\omega}y)T_{2, \omega_0}(T_{2, \omega_0}^{-1}\circ T_{1, \sigma\omega}^{-n+1}y)} 
$$
$$
=\frac{(T^{-n+1}_{1, \omega}(y))^\gamma}{2}\cdot\prod_{j=1}^{n-1} \frac{(T^{1-j}_{1, \omega}(y)/T^{-j}_{1, \omega}y)^\gamma}{T_{1, \omega_{n-j}}(T^{-j}_{1, \omega}y)}
 $$
 $$
\text{by Lemma \ref{1plusalpha}} \quad
\le \frac{(T^{-n+1}_{1, \omega}(x))^\gamma}{2}\cdot\prod_{j=1}^{n-1} \left(1+\frac{\gamma^2}{2}(2T^{-j}_{1, \omega}y)^{2\omega_0}\right)
$$
$$
\text{by inequality \eqref{eq:ynomega}}\quad =\frac{(T^{-n+1}_{1, \omega}(y))^\gamma}{2}\cdot \exp\left(\sum_{j=1}^{n-1} \log(1+\frac{\gamma^2}{2}(2x_j(\omega))^{2\omega_0}\right)
$$
$$
\le x_{n-1}(\omega)^\gamma \exp\left(\sum_{j=1}^{n-1}\frac{\gamma^2}{2}2^{\alpha_1}j^{-2\omega_0/\alpha_1}\right) \le \mathcal D [x_{n-1}(\omega)]^\gamma.
$$
\end{proof}
\begin{lemma}\label{le:v3}
For any $\Phi\in C^0$, we have
$$
\sum_{z\in Z}\sup_{\eps\in V}\|
 \hat\psi_z(\eps,\cdot)\|_{\mathcal H} <\infty. 
$$
\end{lemma}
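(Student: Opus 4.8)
The plan is to estimate the $\mathcal H$-norm of $\hat\psi_z(\eps,\cdot)$, uniformly in $\eps\in V$, by a bound that is summable over $z\in Z$; since for this family of maps there is exactly one cylinder of each length, summing over $z$ amounts to summing over $n:=|z|$. The starting point is that the inverse branches $g_{z,\hat\omega}$ take values in $\Delta=(\tfrac12,1]$, so that $|\Phi\circ g_{z,\hat\omega}(x)|\le\|\Phi\|_{C^0}$ for every $x\in(0,1]$ and hence
\[
x^\gamma|\hat\psi_z(\eps,x)|\le\|\Phi\|_{C^0}\,x^\gamma\!\int_{\hat\Omega}|g_{z,\hat\omega}'(x)|\,d\hat\PP_\eps(\hat\omega).
\]
I would then split the supremum defining $\|\hat\psi_z(\eps,\cdot)\|_{\mathcal H}$ according to whether $x\in\Delta$ or $x\in(0,\tfrac12]$, since the two regimes call for different estimates.

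On $\Delta$ one uses only $x^\gamma\le1$ together with the first estimate of Lemma~\ref{forA1}, which holds uniformly for $x\in\Delta$: $\int_{\hat\Omega}|g_{z,\hat\omega}'(x)|\,d\hat\PP_\eps\le Cn^{-1-1/\alpha_0}[\log n]^{(\alpha_0+1)/\alpha_0}$. On $(0,\tfrac12]$ the weight $x^\gamma$ is what saves us: here I would apply Lemma~\ref{le:tech} pointwise in $\hat\omega$ to get $x^\gamma|g_{z,\hat\omega}'(x)|\le\mathcal D\,[x_{n-1}(\hat\omega)]^\gamma$ for every $x\in[0,\tfrac12]$, then integrate in $\hat\omega$ and insert the deterministic (and $\eps$-uniform) tail bound of Corollary~\ref{roldstuff}, namely $x_{n-1}(\hat\omega)\le 2c^{-1/\alpha_0}(n-1)^{-1/\alpha_0}(\log(n-1))^{1/\alpha_0}$. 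This yields $\sup_{x\in(0,1/2]}x^\gamma|\hat\psi_z(\eps,x)|\le C'\,\|\Phi\|_{C^0}\,(n-1)^{-\gamma/\alpha_0}(\log(n-1))^{\gamma/\alpha_0}$ for all $n\ge3$; the remaining terms $n=1,2$ are trivially bounded (for $n=1$ the branch is just the affine map $g_0$).

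Combining the two regimes gives, for all large $n$ and uniformly in $\eps\in V$,
\[
\|\hat\psi_z(\eps,\cdot)\|_{\mathcal H}\le C\,\|\Phi\|_{C^0}\Bigl(n^{-1-1/\alpha_0}[\log n]^{\frac{\alpha_0+1}{\alpha_0}}+n^{-\gamma/\alpha_0}(\log n)^{\gamma/\alpha_0}\Bigr),
\]
and summing over $n$ the first series converges because $1+1/\alpha_0>1$, the second because $\gamma/\alpha_0>1$, which is guaranteed by the standing hypothesis $\gamma>2\alpha_0$. This establishes \eqref{ha4}. The only genuinely nontrivial ingredient is the uniform control of $x^\gamma|g_{z,\hat\omega}'(x)|$ as $x\to0$, i.e. Lemma~\ref{le:tech}, which in turn rests on the bounded-distortion estimate of Lemma~\ref{lem:weakdist}; granting that, the remaining work is routine bookkeeping of polynomial tails and I do not anticipate a real obstacle here.
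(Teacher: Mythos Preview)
Your overall strategy matches the paper's exactly: split the supremum at $x=\tfrac12$, use Lemma~\ref{forA1} (equivalently Lemma~\ref{Ilength}) on $\Delta$, and on $(0,\tfrac12]$ invoke Lemma~\ref{le:tech} to reduce everything to controlling $E_{\hat\PP_\eps}\bigl[x_{n-1}(\hat\omega)^\gamma\bigr]$. There is, however, one genuine slip. The bound $x_n(\omega)\le 2c^{-1/\alpha_0}n^{-1/\alpha_0}(\log n)^{1/\alpha_0}$ in Corollary~\ref{roldstuff} is \emph{not} deterministic: it holds only for $n\ge n_1(\omega)$, where $n_1(\omega)$ is the random threshold whose stretched-exponential tail $\PP_\eps\{n_1>n\}\le Ce^{-un^v}$ is precisely the second assertion of that corollary. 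This has to be so, since by~\eqref{eq:coarsest} one has $x_n(\amax)\sim c(\amax)n^{-1/\alpha_1}$, which for large $n$ dominates $n^{-1/\alpha_0}(\log n)^{1/\alpha_0}$. The paper therefore splits $E_{\hat\PP_\eps}[x_{n-1}^\gamma]$ on the events $\{n_1\le n\}$ and $\{n_1>n\}$, applying the Corollary~\ref{roldstuff} bound on the first and the tail estimate (together with $x_n\le 1$) on the second, exactly as in the proof of Lemma~\ref{Ilength}.

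Your argument can also be repaired more cheaply, without the splitting: use instead the genuinely pathwise upper bound $x_n(\omega)\le c_n(\amax)n^{-1/\alpha_1}$ from~\eqref{eq:coarsest}. This yields $E_{\hat\PP_\eps}[x_{n-1}^\gamma]\le Cn^{-\gamma/\alpha_1}$, which is still summable because the standing hypothesis $\alpha_1<2\alpha_0<\gamma$ gives $\gamma>\alpha_1$. (Note that this shortcut would \emph{not} work for Lemma~\ref{le:v4}, where the extra factor of $n$ would force $\gamma>2\alpha_1$, which is not assumed; there the paper's splitting is essential.)
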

\begin{proof}
Recall that 
$$
\hat\psi_z(\eps,x)=\int_{\hat\Omega}[\Phi\circ g_{z,\hat\omega} |g_{z,\hat\omega}'|](x) d\hat\PP_\eps(\hat\omega).
$$
Therefore, by Lemma \ref{le:tech}, Corollary \ref{roldstuff} and Lemma \ref{Ilength}, we have
\begin{equation}\label{sumitup1}
\begin{split}
&||\hat\psi_z(\eps,\Phi)\|_{\mathcal H}\le||\Phi||_\infty\left[\sup_{x\in(0,\frac12]}\int_{\hat\Omega} x^\gamma g_{z,\hat\omega}'(x) d\hat\PP_\eps(\hat\omega)+\sup_{x\in(\frac12,1]}\int_{\hat\Omega} g_{z,\hat\omega}'(x) d\hat\PP_\eps(\hat\omega)\right]\\
&\le C||\Phi||_\infty\left[\sup_{x\in(0,\frac12]}\int_{\hat\Omega}  [x_{n-1}(\omega)]^\gamma\PP_\eps(\hat\omega)+ \frac{[\log n]^{\frac{\alpha_0+1}{\alpha_0}}}{n^{\frac{1}{\alpha_0}+1}}\right]\\
 &\le C||\Phi||_\infty\sup_{x\in(0,\frac12]}\left[E_{\PP_\eps} [\chi_{\{n_1(\omega)\le n\}}\cdot x_n(\omega)^{\gamma}]+E_{\PP_\eps} [\chi_{\{n_1(\omega)> n\}}\cdot x_n(\omega)^{\gamma}]\right]\\
&\hskip 7cm +C||\Phi||_\infty +\frac{[\log n]^{\frac{\alpha_0+1}{\alpha_0}}}{n^{\frac{1}{\alpha_0}+1}}\\
 &\le C||\Phi||_\infty \left( E_{\PP_\eps} [\chi_{\{n_1(\omega)\le n\}}\cdot (n^{-1}\log n)^{\gamma/\alpha_0}]+ \PP_\eps \{n_1(\omega)\ge n\}\right)\\
&\hskip 7cm+C||\Phi||_{\infty} \frac{[\log n]^{\frac{\alpha_0+1}{\alpha_0}}}{n^{\frac{1}{\alpha_0}+1}}\\
&\le C||\Phi||_\infty  \hat C n^{-\frac{\gamma}{\alpha_0}}(\log n)^{\frac{\gamma}{\alpha_0}}+C||\Phi||_{\infty}\frac{[\log n]^{\frac{\alpha_0+1}{\alpha_0}}}{n^{\frac{1}{\alpha_0}+1}}.\\
\end{split}
\end{equation}
Since $|z|=n+1$ and $\gamma>2\alpha_0$, summing over $n$ in \eqref{sumitup1} completes the proof of the lemma.
\end{proof}
\begin{lemma}\label{le:v4} We have
$$
\sum_{z\in Z}\sup_{\eps\in V}  \|
\partial_\eps \hat\psi_z(\eps,\hat h_0)\|_{\mathcal H} <\infty.
$$
\end{lemma}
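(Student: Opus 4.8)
The plan is to reduce the claim to the pointwise bounds already established for Lemma~\ref{le:v3}, at the cost of a single extra factor $n=|z|$ produced by the derivative in $\eps$, and then to absorb that factor using the standing assumption $\gamma>2\alpha_0$ (which also supplies $1/\alpha_0>1$).

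First I would differentiate under the integral sign. Since $d\eta_\eps=\rho_\eps\,du$ with $\rho_\eps$ the density \eqref{sdensity} and $\hat\PP_\eps=\eta_\eps^{\NN}$, the Lebesgue differentiation theorem (exactly as in the verification of \eqref{ha3}) gives, with $n=|z|$,
\begin{equation*}
\partial_\eps\hat\psi_z(\eps,\hat h_0)(x)=\sum_{j=1}^{n}\int_{I^{n}}[\hat h_0\circ g_{z,\hat\omega}\,|g_{z,\hat\omega}'|](x)\,\partial_\eps\rho_\eps(\omega_j)\prod_{i\neq j}\rho_\eps(\omega_i)\,d(\bar\omega_n).
\end{equation*}
By Lemma~\ref{lem:rhoeps} one has $|\partial_\eps\rho_\eps|\le C\rho_\eps$ with $C$ independent of $\eps$, so every summand is dominated by $C\int_{\hat\Omega}|\hat h_0\circ g_{z,\hat\omega}|\,|g_{z,\hat\omega}'|(x)\,d\hat\PP_\eps(\hat\omega)$; since there are $n$ of them,
\begin{equation*}
|\partial_\eps\hat\psi_z(\eps,\hat h_0)(x)|\le nC\,\|\hat h_0\|_\infty\int_{\hat\Omega}|g_{z,\hat\omega}'(x)|\,d\hat\PP_\eps(\hat\omega).
\end{equation*}
This is the only genuinely new input: the bound $|\partial_\eps\rho_\eps|\le C\rho_\eps$ lets one trade the differentiated factor in the product for the honest product measure $\hat\PP_\eps$, paying only the factor $n$ from the sum over $j$.

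Next I would estimate $\sup_{x\in(0,1]}x^{\gamma}\int_{\hat\Omega}|g_{z,\hat\omega}'(x)|\,d\hat\PP_\eps(\hat\omega)$ verbatim as in \eqref{sumitup1}: on $(0,\tfrac12]$ apply Lemma~\ref{le:tech} to get $x^{\gamma}g_{z,\hat\omega}'(x)\le\mathcal{D}\,[x_{n-1}(\omega)]^{\gamma}$ and then Corollary~\ref{roldstuff} to bound $\int_{\hat\Omega}[x_{n-1}(\omega)]^{\gamma}\,d\hat\PP_\eps\le \hat C\,n^{-\gamma/\alpha_0}(\log n)^{\gamma/\alpha_0}$ up to an exponentially small term; on $(\tfrac12,1]$ use the mean value theorem together with Lemma~\ref{lem:weakdist} and Lemma~\ref{Ilength} to bound the integral by $C\,n^{-1/\alpha_0-1}[\log n]^{(\alpha_0+1)/\alpha_0}$. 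Combining with the previous display yields
\begin{equation*}
\sup_{\eps\in V}\|\partial_\eps\hat\psi_z(\eps,\hat h_0)\|_{\mathcal{H}}\le C\,\|\hat h_0\|_\infty\left(n^{1-\gamma/\alpha_0}(\log n)^{\gamma/\alpha_0}+n^{-1/\alpha_0}[\log n]^{(\alpha_0+1)/\alpha_0}\right)+\text{(exponentially small)}.
\end{equation*}
Summing over $z$ amounts to summing over $n$: both series converge because $\gamma>2\alpha_0$ forces $1-\gamma/\alpha_0<-1$ and $\alpha_0<1$ forces $1/\alpha_0>1$, while the exponentially small terms are harmless; since in this one-dimensional example there is (up to a fixed shift of the index) a single word $z$ of each length, no combinatorial factor intervenes.

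I do not expect a real obstacle: Lemma~\ref{le:v4} is the $\mathcal H$-analogue, specialized to $\Phi=\hat h_0$, of the bounds proved for Lemma~\ref{le:v3}, and the only point requiring care is verifying that the extra factor $n$ coming from $\partial_\eps$ is still beaten by the tail decay, which is exactly what the gap condition $\gamma>2\alpha_0$ imposed throughout Subsection~\ref{PMU} guarantees. As a by-product the estimate above also shows $\partial_\eps\hat\psi_z(\eps,\hat h_0)\in\mathcal H$ for each $z$, so the hypotheses of Lemma~\ref{4thstep} are met and nothing further is needed.
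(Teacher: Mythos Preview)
Your proposal is correct and follows essentially the same route as the paper: differentiate under the product measure, use Lemma~\ref{lem:rhoeps} to replace each $\partial_\eps\rho_\eps(\omega_j)$ by $C\rho_\eps(\omega_j)$ and thereby reduce to the $\hat\PP_\eps$-integral of Lemma~\ref{le:v3} times the extra factor $n$, then sum using $\gamma>2\alpha_0$ and $\alpha_0<1$. The paper's argument is the same in structure and in the key estimates; the only cosmetic difference is that the paper writes the final bound as $n^{1-\gamma/\alpha_0}+n^{-1/\alpha_0}[\log n]^{(\alpha_0+1)/\alpha_0}$ without the harmless logarithmic factor you retain on the first term.
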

\begin{proof}
We first notice that
\begin{equation}\label{peps}
\partial_\eps\hat\psi_z(\eps,\hat h_0)(x)=\sum_{j=1}^{n}\int_{I^n}[\hat h_0\circ g_{z,\hat\omega} |g_{z,\hat\omega}'|](x) \Pi_{i\not=j}^n\rho_{\eps}(\omega_i)\partial_\eps\rho_{\eps}(\omega_j) d(\bar\omega_n).
\end{equation}
Since $\frac{\partial \rho_\eps}{\rho_\eps}\le C$ (independent of $\eps$, see Lemma \ref{lem:rhoeps}), using \eqref{peps} and an argument similar to that in the proof of Lemma \ref{le:v3}, we have
\begin{equation}\label{sumitup2}
\begin{split}
\|\partial_\eps \hat\psi_z(\eps,\hat h_0)\|_{\mathcal H}&\le C \sum_{j=1}^{n}||\int_{I^n}[\hat h_0\circ g_{z,\hat\omega} |g_{z,\hat\omega}'|]d\eta_{\eps}^n||_{\mathcal H}\\
&\le n C||\hat h_0||_\infty\left[\sup_{x\in(0,\frac12]}\int_{I^n}  [x_{n-1}(\omega)]^\gamma d\eta_{\eps}^n+\frac{[\log n]^{\frac{\alpha_0+1}{\alpha_0}}}{n^{\frac{1}{\alpha_0}+1}}\right] \\
&\le C||\hat h_0||_\infty \left[ n^{-\frac{\gamma}{\alpha_0}+1}+\frac{[\log n]^{\frac{\alpha_0+1}{\alpha_0}}}{n^{\frac{1}{\alpha_0}}}\right]. 
\end{split}
\end{equation}
Since $|z|=n+1$ and $\gamma>2\alpha_0$, summing over $n$ in \eqref{sumitup2} completes the proof of the lemma.
\end{proof}


\subsection{Random Pomeau-Manneville maps and a family of a uniformly distributed measures converging to a Dirac}\label{PMdirac}
Throughout this section we let $u\in(\alpha_0,\alpha_{0}+\eps)$, where $[\alpha_0,\alpha_0+\eps]\subset (0,1)$. We consider again the Pomeau-Manneville family defined in Example \ref{PM} but this time with $u$ distributed according to $\eta_\eps=\rho_\eps du$ where, for $\eps>0$, 
\begin{equation}\label{distB}
\rho_\eps(u)=\begin{cases} \frac{1}{\eps} \quad \text{if} \quad u\in  (\alpha_0,\alpha_{0}+\eps),\\ 0 \quad \text{otherwise}; \end{cases}
\end{equation}
and $\eta_0=\delta_{\alpha_0}$. Let $\PP=\eta_0^{\NN}$. The random dynamical system $(\hat \Omega, \{\hat T_{\hat\omega}\},\hat\PP_\eps)$ is defined in the same way as in the previous example of subsection \ref{PMU}, with the only difference is that $\hat\PP_\eps$ is defined using the density in \eqref{distB}. Assumptions (A1) and (A2) are verified following a similar approach to the proofs in  Lemmas \ref{lem:weakdist}, \ref{lem:yiprime} and \ref{forA1}. Verifying \eqref{ha3}, \eqref{ha3'}, \eqref{ha4} and \eqref{ha4'} for this type of distribution is done by using duality; i.e., for $i=0,1$:
\begin{equation*}
\begin{split}
&\partial_\eps\hat\psi_z^{(i)}(\eps,\Phi)(x)=\\
&\sum_{j=1}^{n}\int_{[\alpha_0,\alpha_0+\eps]^n}\frac{\partial}{\partial\omega_j}[\Phi\circ g_{z,\omega} |g_{z,\omega}'|]^{(i)}(x)d\eta_{\eps}(\omega_1)\cdots d\nu_\eps(\omega_j)\cdots d\eta_{\eps}(\omega_n),
\end{split}
\end{equation*}
with $\frac{d\nu_\eps}{du}=\frac{u-\alpha_0}{\eps^2}1_{(\alpha_0,\alpha_0+\eps)}$. Note that as $\eps\to 0$
$$\int_{[\alpha_0,\alpha_0+\eps]^n}\frac{\partial}{\partial\omega_j}[\Phi\circ g_{z,\omega} |g_{z,\omega}'|](x)d\eta_{\eps}(\omega_1)\cdots d\nu_\eps(\omega_j)\cdots d\eta_{\eps}(\omega_n)$$
converges to
$$\frac12\frac{\partial}{\partial\omega_j}[\Phi\circ g_{z,\omega} |g_{z,\omega}'|](x)|_{\omega_i=\alpha_0};\,\, i=1,\dots, n.$$
Therefore, the linear response formula in this example is given by:
$$\frac12 \left[F_{\PP} (\hat h^*_{\alpha_0}) + Q\hat h_{\alpha_0}\right],$$
which is $\frac12$ of the response under deterministic perturbations of $T_{\alpha_0}$ (see Corollary \ref{detres2} and Remark \ref{detres1}). This might be an interesting observation in relation to controlling the statistical properties of chaotic dynamical systems \cite{GP}. Here $h_{\alpha_0}$ is the invariant density of $T_{\alpha_0}$ and the explicit formula of $\hat h^*_{\alpha_0}$ is the same as in Remark \ref{detres1} with $\alpha_0$ replacing $u$. 
\section{Appendix}\label{appendix} 
\subsection{Elementary facts about convergence of measures}

We collect some elementary facts relative to convergence of measures.

Let $\mu_\eps$ be a family of finite measures on a compact rectangle $K$ of $\RR^d$. 
We say that $\mu_\eps$ is differentiable as a distribution of order one, if
there exists a finite vector valued measure $\nu_\eps$ of bounded total variation such that 
for any test function $\phi\colon K\to \RR$, 
\begin{equation}\label{C1dist}
\partial_\eps \int_K \phi(u) d\mu_\eps(u) = \int_K \nabla_u \phi\cdot d\nu_\eps(u).
\end{equation}
We set $\partial_\eps\mu_\eps = \nu_\eps$.
\begin{lemma}\label{measure}
Let $K_i$ be a compact rectangles in $\RR^{d_i}$, $i=1,2$.

%

(i)
Let $\mu_\eps$ be a family of measures on $K_1$, differentiable as distributions of order one. 
Then $\mu_\eps$ is a continuous family of measures on $K_1$.

(ii) 
The product of two families of measures on $K_1$, $K_2$ respectively, 
differentiable as distributions of order one, is a distribution of order one  on $K_1\times K_2$, and 
\[
\partial_\eps (\mu_\eps^1\otimes\mu_\eps^2)=\partial_\eps \mu_\eps^1\otimes\mu_\eps^2+ \mu_\eps^1\otimes\partial_\eps\mu_\eps^2.
\]
\end{lemma}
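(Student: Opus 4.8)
The plan for part (i) is to first test only against $C^1$ functions and then use density. For $\phi\in C^1(K_1)$ the identity \eqref{C1dist} says that $\eps\mapsto\int_{K_1}\phi\,d\mu_\eps$ is differentiable, hence continuous; and taking $\phi\equiv1$ (for which $\nabla_u\phi\equiv0$) shows that the total mass $\mu_\eps(K_1)$ is independent of $\eps$, so $C:=\sup_\eps|\mu_\eps|(K_1)<\infty$ (the $\mu_\eps$ being finite measures). Given a general $\phi\in C(K_1)$, I would choose $\phi_n\in C^1(K_1)$ with $\|\phi-\phi_n\|_\infty\to0$; then $|\int\phi\,d\mu_\eps-\int\phi_n\,d\mu_\eps|\le C\|\phi-\phi_n\|_\infty$ uniformly in $\eps$, so $\eps\mapsto\int\phi\,d\mu_\eps$ is a locally uniform limit of continuous functions, hence continuous. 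This is exactly the weak-$*$ continuity of $\eps\mapsto\mu_\eps$ asserted in (i).

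For part (ii), fix a $C^1$ test function $\Phi\colon K_1\times K_2\to\RR$ and set $H(r,s):=\int_{K_1\times K_2}\Phi\,d(\mu_r^1\otimes\mu_s^2)$. By Fubini, $H(r,s)=\int_{K_1}\theta_s\,d\mu_r^1=\int_{K_2}\phi_r\,d\mu_s^2$ with $\theta_s(u):=\int_{K_2}\Phi(u,v)\,d\mu_s^2(v)$ and $\phi_r(v):=\int_{K_1}\Phi(u,v)\,d\mu_r^1(u)$. Since $\Phi$ is $C^1$ and the measures are finite, differentiation under the integral sign gives $\theta_s\in C^1(K_1)$ and $\phi_r\in C^1(K_2)$ with $\nabla\theta_s(u)=\int\nabla_u\Phi(u,v)\,d\mu_s^2(v)$ and $\nabla\phi_r(v)=\int\nabla_v\Phi(u,v)\,d\mu_r^1(u)$. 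Applying the hypothesis to $\mu^1$ with the frozen test function $\theta_s$, and to $\mu^2$ with the frozen test function $\phi_r$, the partial derivatives of $H$ exist, namely
\[
\partial_rH(r,s)=\int_{K_1}\nabla\theta_s\cdot d\nu_r^1=\int_{K_1}\!\int_{K_2}\nabla_u\Phi\,d\mu_s^2\cdot d\nu_r^1,\qquad
\partial_sH(r,s)=\int_{K_2}\nabla\phi_r\cdot d\nu_s^2=\int_{K_2}\!\int_{K_1}\nabla_v\Phi\,d\mu_r^1\cdot d\nu_s^2 .
\]
The statement of (ii) is then exactly the chain-rule identity $\frac{d}{d\eps}H(\eps,\eps)=\partial_rH(\eps,\eps)+\partial_sH(\eps,\eps)$, whose right-hand side unwinds to $\int\nabla_{(u,v)}\Phi\cdot d\big(\nu_\eps^1\otimes\mu_\eps^2+\mu_\eps^1\otimes\nu_\eps^2\big)$.

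To prove this chain-rule identity I would split $H(\eps+t,\eps+t)-H(\eps,\eps)=\big[H(\eps+t,\eps+t)-H(\eps,\eps+t)\big]+\big[H(\eps,\eps+t)-H(\eps,\eps)\big]$. Dividing the second bracket by $t$ and letting $t\to0$ yields $\partial_sH(\eps,\eps)$ immediately, since there only $\mu^2$ moves while the test function $\phi_\eps$ stays fixed. For the first bracket I would write $H(\eps+t,\eps+t)-H(\eps,\eps+t)=\int_{K_1}\theta_\eps\,d(\mu_{\eps+t}^1-\mu_\eps^1)+\int_{K_1}(\theta_{\eps+t}-\theta_\eps)\,d(\mu_{\eps+t}^1-\mu_\eps^1)$: after division by $t$ the first summand converges to $\int\nabla\theta_\eps\cdot d\nu_\eps^1=\partial_rH(\eps,\eps)$ because $\theta_\eps$ is a fixed $C^1$ test function, and it remains to show that $t^{-1}\int_{K_1}(\theta_{\eps+t}-\theta_\eps)\,d(\mu_{\eps+t}^1-\mu_\eps^1)\to0$. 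The hard part will be precisely this term: the crude bound $2t^{-1}\|\theta_{\eps+t}-\theta_\eps\|_\infty\,|\mu^1|(K_1)$ is only $O(1)$, so one must use cancellation. Here I would invoke the mean value theorem to write $t^{-1}(\theta_{\eps+t}-\theta_\eps)(u)=\int\nabla_v\Phi(u,v)\cdot d\nu_{s_{t,u}}^2(v)$ with $s_{t,u}$ between $\eps$ and $\eps+t$, observe that $\{\nabla_v\Phi(u,\cdot):u\in K_1\}$ is equicontinuous and uniformly bounded on the compact $K_2$, and then combine part (i) applied to $\mu^1$ (the uniform total-mass bound, and the fact that the weak convergence $\mu_{\eps+t}^1\to\mu_\eps^1$ is uniform over such equicontinuous families) with the continuity and locally bounded total variation of the derivative family $\nu_\eps^2$ (as in the notion of distribution of order one used in the paper, cf.\ \eqref{eq:C1noise}) to pass to the limit. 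By the symmetry between $K_1$ and $K_2$ the same computation produces the $\nu^2$-term as well, and collecting the limits gives the claimed product rule, the measure $\nu_\eps^1\otimes\mu_\eps^2+\mu_\eps^1\otimes\nu_\eps^2$ having bounded total variation because each $\mu_\eps^i$ is finite and each $\nu_\eps^i$ has bounded total variation.
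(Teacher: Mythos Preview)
Your argument is correct and reaches the same conclusion, but via a somewhat different route than the paper's.

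For part (i), the paper is content to show continuity at $\eps=0$ against $C^1$ test functions, simply by reading off the first-order expansion
\[
\Big|\int\phi\,d\mu_\eps-\int\phi\,d\mu_0\Big|\le\eps\|\phi\|_{C^1}\|\nu_0\|+o(\eps).
\]
You go further and extend this to arbitrary continuous test functions by density, after noting that the total mass is constant in $\eps$. This is more than the paper proves, but harmless.

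For part (ii), the paper takes a shorter path: it expands $\int_{K_1}\phi(u_1,u_2)\,d\mu^1_\eps$ to first order in $\eps$ (with remainder $o(\eps)$), integrates this identity against $d\mu^2_\eps$, then expands the leading term in $\mu^2_\eps$ the same way, and finally invokes (i) to replace $\mu^2_\eps$ by $\mu^2_0$ in the term already carrying a factor $\eps$. Your approach instead introduces the two-variable function $H(r,s)$ and proves the diagonal chain rule $\frac{d}{d\eps}H(\eps,\eps)=\partial_rH+\partial_sH$ by splitting the increment and isolating a cross term $t^{-1}\int(\theta_{\eps+t}-\theta_\eps)\,d(\mu^1_{\eps+t}-\mu^1_\eps)$. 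Your treatment of that cross term (mean value theorem plus equicontinuity of $\{\nabla_v\Phi(u,\cdot)\}_{u}$ plus uniform weak convergence) is more laborious than the paper's, and in fact leans on the continuity of $s\mapsto\nu^2_s$ stated later in \eqref{eq:C1noise} rather than in the bare hypotheses of the lemma. The paper's expansion has an analogous hidden uniformity (the $o(\eps)$ remainder in the inner integral must be uniform in $u_2$ before one integrates it against $\mu^2_\eps$), so both proofs are at the same level of rigor on this point. What the paper's approach buys is brevity; what yours buys is that the location of the delicate cross term is made explicit rather than buried in an $o(\eps)$.
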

\begin{proof}
(i) Let $\mu_\eps$ be a family of measures satisfying \ref{C1dist}. Therefore, by definition,
$$\int \phi d\mu_\eps=\int\phi d\mu_0+\eps\int \nabla_u \phi\cdot d\nu_0(u)+o(\eps).$$
Consequently, 
$$|\int \phi d\mu_\eps-\int\phi d\mu_0|\le\eps||\phi||_{C^1}||\nu_0||+o(\eps).$$
(ii) $$\int_{K_1} \phi(u_1,u_2) d\mu^1_\eps=\int_{K_1}\phi(u_1,u_2) d\mu^1_0+\eps\int_{K_1} \nabla_{u_1} \phi(u_1,u_2)\cdot d\nu^1_0+o(\eps).$$
Now,
 \begin{equation}\label{diffmes}
 \begin{aligned}
 &\int_{K_2}\int_{K_1} \phi(u_1,u_2) d\mu^1_\eps d\mu^2_\eps=\int_{K_2}\int_{K_1}\phi(u_1,u_2) d\mu^1_0d\mu^2_\eps\\
 &+\eps\int_{K_2}\int_{K_1} \nabla_{u_1} \phi(u_1,u_2)\cdot d\nu^1_0d\mu^2_\eps+o(\eps)\\
 &=\int_{K_2}\int_{K_1}\phi(u_1,u_2) d\mu^1_0d\mu^2_0+\eps\int_{K_2}\nabla_{u_2}\int_{K_1}  \phi(u_1,u_2) d\mu^1_0\cdot d\nu^2_0+o(\eps)\\+&\eps\int_{K_2}\int_{K_1} \nabla_{u_1} \phi(u_1,u_2)\cdot d\nu^1_0d\mu^2_\eps +o(\eps).
  \end{aligned} 
  \end{equation}
  Using Lebesgue differentiation for the second term in \eqref{diffmes}, and $(i)$ for the third term in \eqref{diffmes}, $(ii)$ follows.
  \end{proof}
  \subsection{Linear response for operators with a spectral gap}

Let $(X,\A,m)$ be a probability space and $\B$ a Banach space continuously embedded in $L^1(X,m)$.
We assume that the constants belong to $\B$.
Let $L_\eps$ be a family of Markov operators on $\B$, for $\eps$ in a neighbourhood $V$ of 0.
We assume that for each $\eps$, $1$ is a simple eigenvalue of $L_\eps$ with an associated eigenfunction
$h_\eps$, that we normalize so that $\int h_\eps dm =1$.
\begin{proposition}\label{pro:lrmarkov}
We suppose that the operators satisfy
\begin{itemize}
\item
$\eps\mapsto L_\eps h_0 \in \B$ is differentiable at $\eps=0$
\item
$\eps\mapsto L_\eps \phi \in \B$ is continuous at $\eps=0$ for any $\phi\in \B$.
\item
$L_\eps$ has a uniform spectral gap on $\B$; i.e., $\exists \theta\in(0,1)$ and a $C>0$, independent of $\eps$, such that for any $\phi\in \B_0$ and $n\ge 1$ we have $$||L_\eps^n\phi||_{\B}\le C \theta^n ||\phi||_{\B}.$$
\end{itemize}
Under these assumptions, the eigenfunction $h_\eps$ is differentiable at $\eps=0$ as an element of $\B$.
In addition we have the linear response formula
\begin{equation}\label{eq:lrmarkov}
\partial_\eps h_\eps|_{\eps=0} = (I-L_0)^{-1} \partial_\eps L_\eps h_0 |_{\eps=0}.
\end{equation}
\end{proposition}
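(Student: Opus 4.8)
The plan is to run the standard spectral-perturbation argument, with the uniform spectral gap carrying the whole weight. Put $\B_0=\{\phi\in\B:\int\phi\,dm=0\}$; since $\phi\mapsto\int\phi\,dm$ is continuous on $\B$ (as $\B$ embeds continuously in $L^1(X,m)$), $\B_0$ is a closed subspace of $\B$, hence a Banach space, and it is invariant under every $L_\eps$ because the $L_\eps$ are Markov operators and preserve $\int\cdot\,dm$. On $\B_0$ the hypothesis $\|L_\eps^n\phi\|_\B\le C\theta^n\|\phi\|_\B$ for $n\ge1$ shows that $I-L_\eps$ is invertible on $\B_0$, with
\[
(I-L_\eps)^{-1}\big|_{\B_0}=\sum_{n\ge0}L_\eps^n,\qquad
\sup_{\eps\in V}\big\|(I-L_\eps)^{-1}\big|_{\B_0}\big\|_{\B\to\B}\le 1+\frac{C\theta}{1-\theta}=:C'.
\]
This \emph{uniform} bound is the heart of the matter.

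Next I would derive the basic identity. From $L_\eps h_\eps=h_\eps$ and $L_0h_0=h_0$,
\[
h_\eps-h_0=L_\eps h_\eps-L_0h_0=L_\eps(h_\eps-h_0)+(L_\eps-L_0)h_0,
\]
so $(I-L_\eps)(h_\eps-h_0)=(L_\eps-L_0)h_0$. The normalization $\int h_\eps\,dm=\int h_0\,dm=1$ gives $h_\eps-h_0\in\B_0$, and $(L_\eps-L_0)h_0\in\B_0$ as well, since $\int L_\eps h_0\,dm=\int h_0\,dm=\int L_0h_0\,dm$. Inverting on $\B_0$,
\[
h_\eps-h_0=(I-L_\eps)^{-1}\big|_{\B_0}\,(L_\eps-L_0)h_0.
\]
Combined with the bound $C'$ and the differentiability (in particular continuity) of $\eps\mapsto L_\eps h_0$ at $0$, this already yields $h_\eps\to h_0$ in $\B$.

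For differentiability, write $(L_\eps-L_0)h_0=\eps\,\Delta_0+o(\eps)$ in $\B$ with $\Delta_0:=\partial_\eps L_\eps h_0|_{\eps=0}\in\B_0$, and set $\xi_0:=(I-L_0)^{-1}|_{\B_0}\Delta_0$. Then, with all inverses taken on $\B_0$ and using the resolvent identity $(I-L_\eps)^{-1}-(I-L_0)^{-1}=(I-L_\eps)^{-1}(L_\eps-L_0)(I-L_0)^{-1}$,
\begin{align*}
\frac{h_\eps-h_0}{\eps}-(I-L_0)^{-1}\Delta_0
&=(I-L_\eps)^{-1}\Big(\frac{(L_\eps-L_0)h_0}{\eps}-\Delta_0\Big)\\
&\quad+(I-L_\eps)^{-1}(L_\eps-L_0)\xi_0.
\end{align*}
The first term has $\B$-norm $\le C'\,\big\|\tfrac{(L_\eps-L_0)h_0}{\eps}-\Delta_0\big\|_\B\to0$, and the second has $\B$-norm $\le C'\,\|(L_\eps-L_0)\xi_0\|_\B\to0$ by the continuity of $\eps\mapsto L_\eps\xi_0$ at $0$. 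Hence $\partial_\eps h_\eps|_{\eps=0}=(I-L_0)^{-1}\Delta_0$, which is \eqref{eq:lrmarkov} (the right-hand side being well defined precisely because $\Delta_0\in\B_0$).

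The main obstacle is the first step: the \emph{uniform} invertibility of $I-L_\eps$ on $\B_0$, i.e. that $\sup_{\eps\in V}\|(I-L_\eps)^{-1}|_{\B_0}\|<\infty$, which is exactly what the uniform-spectral-gap hypothesis provides; once that is secured, the remainder is routine bookkeeping with the resolvent identity together with the two regularity hypotheses on $\eps\mapsto L_\eps\phi$. A secondary point needing care is verifying that $\B_0$ is closed and $L_\eps$-invariant and that all the relevant vectors ($h_\eps-h_0$, $(L_\eps-L_0)h_0$, $\Delta_0$, $\xi_0$) lie in $\B_0$, so that restricting resolvents to $\B_0$ is legitimate.
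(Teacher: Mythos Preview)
Your proof is correct and follows essentially the same route as the paper's: derive the identity $h_\eps-h_0=(I-L_\eps)^{-1}|_{\B_0}(L_\eps-L_0)h_0$, expand $(L_\eps-L_0)h_0=\eps q+o(\eps)$ using the first hypothesis, and then pass from $(I-L_\eps)^{-1}$ to $(I-L_0)^{-1}$ via the resolvent identity together with the second hypothesis and the uniform bound on the resolvents. You are somewhat more explicit than the paper about why $\B_0$ is closed and $L_\eps$-invariant and why the relevant vectors lie in $\B_0$, but the argument is the same.
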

\begin{proof}
Since we have a uniform spectral gap the resolvent $(I-L_\eps)^{-1}$ is well defined on the set $\B_0$
of elements of $\B$ with zero average and is uniformly bounded in $\eps$.
We have the identity
\[
h_\eps = (I-L_\eps)^{-1} (L_\eps-L_0) h_0 + h_0.
\]
By assumption, setting $q=\partial_\eps L_\eps h_0$ we have 
\[
(L_\eps-L_0) h_0 = \eps q + o(\eps)
\]
where $o(\eps)$ is understood in the $\B$-norm.
Moreover, since $(L_\eps-L_0)h_0 \in\B_0$ and $\B$ is continuously embedded in $L^1$ we also have $q\in\B_0$.
Therefore we have
\[
h_\eps = \eps (I-L_\eps)^{-1} q + o(\eps).
\]
Finally, we have
\[
(I-L_\eps)^{-1}q - (I-L_0)^{-1}q = (I-L_\eps)^{-1} (L_\eps-L_0) (I-L_0)^{-1} q.
\]
The second hypothesis applied to $\phi=(I-L_0)^{-1} q$ shows that $(L_\eps-L_0) (I-L_0)^{-1} q \to 0$ as $\eps\to0$,
and the conclusion follows by uniform boundedness of $(I-L_\eps)^{-1}$ as operators on $\B_0$.
\end{proof}
\subsection{[Proof of Proposition~\ref{lem:unifgap}]
Uniqueness of stationary density and a uniform spectral gap for uniformly expanding systems}
We prove Proposition ~\ref{lem:unifgap} in a series of lemmas. Let $[\omega]_n=\omega_0, \dots, \omega_{n-1}$ be a path of length $n$ and consider the partition $\{g_{z, [\omega]_n}(X)\}_{z\in Z^n}$ of $X$, where $Z^n$ is the labelling set for the $n^{\text{th}}$-iterate of the random map. First we provide distortion estimates for  $g_{z, [\omega]_n}$. 
\begin{lemma}\label{lem:ndist}
There exists $D>0$ such that for any $n\in \NN$, $z\in Z^n$ and $x, y\in X$ we have
$$
\left|\frac{g_{z, [\omega]_n}'(x)}{g_{z, [\omega]_n}'(y)}-1\right| \le   D|x-y|.
$$
\end{lemma}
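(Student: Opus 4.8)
The plan is to run the standard bounded-distortion argument, feeding the one-step distortion bound of (A1) along an orbit that contracts geometrically by (A2). Write $g_{z,[\omega]_n}=\phi_0\circ\phi_1\circ\cdots\circ\phi_{n-1}$, where $\phi_j=g_{z_j,\omega_j}$ denotes the $j$-th inverse branch appearing in the composition, and set $\psi_j=\phi_j\circ\phi_{j+1}\circ\cdots\circ\phi_{n-1}$ for $0\le j\le n-1$ and $\psi_n=\mathrm{id}$, so that $\psi_j=\phi_j\circ\psi_{j+1}$ and $g_{z,[\omega]_n}=\psi_0$. By the chain rule,
\[
g_{z,[\omega]_n}'(x)=\prod_{j=0}^{n-1}\phi_j'\!\left(\psi_{j+1}(x)\right),
\]
so the distortion ratio is a product of one-step ratios evaluated along the orbit $x\mapsto\psi_{j+1}(x)$.

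First I would record that (A2) forces this orbit to separate at a geometric rate: since each $\phi_i$ has $|\phi_i'|\le\beta$, the composition $\psi_{j+1}$ of $n-j-1$ inverse branches satisfies $|\psi_{j+1}(x)-\psi_{j+1}(y)|\le\beta^{\,n-j-1}|x-y|$ for all $x,y\in X$. Plugging this into the one-step bound of (A1) gives, for each $j$,
\[
\left|\frac{\phi_j'(\psi_{j+1}(x))}{\phi_j'(\psi_{j+1}(y))}-1\right|\le\tilde D\,\beta^{\,n-j-1}|x-y|=:|t_j|,
\]
whence $\sum_{j=0}^{n-1}|t_j|\le\tilde D|x-y|\sum_{k\ge0}\beta^{k}=\tfrac{\tilde D}{1-\beta}|x-y|\le\tfrac{\tilde D\,\diam X}{1-\beta}$, a bound uniform in $n$, in $z\in Z^n$, and in $[\omega]_n$.

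Then I would close with the elementary inequalities $1+s\le e^{s}$ and $e^{s}-1\le s e^{s}$ for $s\ge0$: writing $\tfrac{g_{z,[\omega]_n}'(x)}{g_{z,[\omega]_n}'(y)}=\prod_{j=0}^{n-1}(1+t_j)$,
\[
\left|\frac{g_{z,[\omega]_n}'(x)}{g_{z,[\omega]_n}'(y)}-1\right|\le\prod_{j=0}^{n-1}(1+|t_j|)-1\le\exp\Bigl(\textstyle\sum_j|t_j|\Bigr)-1\le\Bigl(\textstyle\sum_j|t_j|\Bigr)\exp\Bigl(\textstyle\sum_j|t_j|\Bigr)\le\frac{\tilde D}{1-\beta}\,e^{\tilde D\diam X/(1-\beta)}\,|x-y|,
\]
so the lemma holds with $D=\frac{\tilde D}{1-\beta}\exp\!\bigl(\tilde D\diam X/(1-\beta)\bigr)$. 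The argument is entirely routine; the only point needing (minimal) care is to use (A2) to make the one-step errors summable over the whole length-$n$ word, which is precisely what makes $D$ independent of $n$, $z$ and $\omega$.
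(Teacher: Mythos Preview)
Your proof is correct and follows essentially the same route as the paper's: both decompose the derivative ratio via the chain rule, apply the one-step distortion bound (A1) along the orbit, use (A2) to make the errors geometrically summable, and then convert the summable bound back into a Lipschitz estimate on the ratio. The only cosmetic difference is the final step: the paper passes through $\log$ of the ratio and then uses that $|r-1|\le C\log r$ on a bounded set, whereas you bound $\bigl|\prod_j(1+t_j)-1\bigr|\le\prod_j(1+|t_j|)-1\le e^{\sum|t_j|}-1\le(\sum|t_j|)e^{\sum|t_j|}$ directly, which is slightly cleaner and yields an explicit constant.
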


\begin{proof} Start with 
$$
\begin{aligned}
\log\left|\frac{g_{z, [\omega]_n}'(x)}{g_{z, [\omega]_n}'(y)}\right|=\sum_{j=0}^{n-1}\log \left|\frac{g_{z, \omega_j}'(g_{z, [\omega]_j}(x))}{g_{z, \omega_j}'(g_{z, [\omega]_j}'(y))}\right| \le \sum_{j=0}^{n-1} \left|\frac{g_{z, \omega_j}'(g_{z, [\omega]_j}(x))}{g_{z, \omega_j}'(g_{z, [\omega]_j}'(y))}-1\right|\\
\le \tilde D\sum_{j=0}^{n-1} |g_{z, [\omega]_j}(x)-g_{z, [\omega]_j}(y)| \le \tilde D\sum_{j=0}^{n-1}\beta^j|x-y| \le \tilde D\frac{1}{1-\beta}|x-y|.
\end{aligned}
$$
The above inequality implies that 
$$
\left|\frac{g_{z, [\omega]_n}'(x)}{g_{z, [\omega]_n}'(y)}\right|\le e^{\tilde D/(1-\beta)}.
$$
Hence, there exists $C>0$ independent of $x, y\in X$ such that  
$$
\left|\frac{g_{z, [\omega]_n}'(x)}{g_{z, [\omega]_n}'(y)}-1\right| \le C\log\left|\frac{g_{z, [\omega]_n}'(x)}{g_{z, [\omega]_n}'(y)}\right| \le C\tilde D\frac{1}{1-\beta}|x-y|.
$$
Now, we can take $D=C\tilde D/(1-\beta)$.
\end{proof}

\begin{lemma}\label{totalcontrol} 
There exists a $D>0$ such that for any $n$ we have
\begin{enumerate}
\item $(L^n_{\omega}{\bf 1}):=\sum_{z\in Z^n} |g_{z, [\omega]_n}'(x)|\le D+1;$
\item $\Lip (L^n_{\omega}{\bf 1})\le D(D+1)$;
\item $||L_{\omega}^n \Phi||_{\infty}\le (D+1) ||\Phi||_{\infty}$.
\end{enumerate}
\end{lemma}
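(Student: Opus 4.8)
The plan is to obtain all three bounds from the distortion estimate of Lemma~\ref{lem:ndist}, together with the fact, recalled at the beginning of this subsection, that the cylinders $g_{z,[\omega]_n}(X)$, $z\in Z^n$, form a partition of $X$ (mod $0$) --- a consequence of the standing assumption of Subsection~\ref{class} that every branch of each $T_\omega$ is onto. The key preliminary observation is the normalization identity: by the change of variables formula,
\[
\int_X L^n_\omega{\bf 1}\,dm=\sum_{z\in Z^n}\int_X|g_{z,[\omega]_n}'|\,dm=\sum_{z\in Z^n}m\big(g_{z,[\omega]_n}(X)\big)=m(X)=1
\]
for every $n$ and every path $\omega$.

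For item (1), fix $x\in X$. Lemma~\ref{lem:ndist} gives $|g_{z,[\omega]_n}'(x)|\le(1+D|x-y|)\,|g_{z,[\omega]_n}'(y)|$ for all $y\in X$; integrating in $y$ against $m$ and bounding $|x-y|\le\diam X$ yields $|g_{z,[\omega]_n}'(x)|\le(1+D\diam X)\int_X|g_{z,[\omega]_n}'|\,dm$. Summing over a finite set of indices $z$, letting it exhaust $Z^n$, and invoking the normalization identity gives $L^n_\omega{\bf 1}(x)\le 1+D\diam X$; after relabelling the constant (enlarging $D$ so that it also dominates $D\diam X$ and the distortion constant of Lemma~\ref{lem:ndist}) this becomes the asserted bound $L^n_\omega{\bf 1}\le D+1$. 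Item (3) then follows at once, since
\[
|L^n_\omega\Phi(x)|\le\sum_{z\in Z^n}|\Phi(g_{z,[\omega]_n}(x))|\,|g_{z,[\omega]_n}'(x)|\le\|\Phi\|_\infty\,L^n_\omega{\bf 1}(x)\le(D+1)\|\Phi\|_\infty .
\]

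For item (2), I would use Lemma~\ref{lem:ndist} in the form
\[
\big|\,|g_{z,[\omega]_n}'(x)|-|g_{z,[\omega]_n}'(y)|\,\big|
=|g_{z,[\omega]_n}'(y)|\,\Big|\tfrac{|g_{z,[\omega]_n}'(x)|}{|g_{z,[\omega]_n}'(y)|}-1\Big|
\le D|x-y|\,|g_{z,[\omega]_n}'(y)| .
\]
Summing over $z\in Z^n$ and using item (1),
\[
|L^n_\omega{\bf 1}(x)-L^n_\omega{\bf 1}(y)|\le D|x-y|\sum_{z\in Z^n}|g_{z,[\omega]_n}'(y)|=D|x-y|\,L^n_\omega{\bf 1}(y)\le D(D+1)|x-y|,
\]
so $\Lip(L^n_\omega{\bf 1})\le D(D+1)$.

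Since Lemma~\ref{lem:ndist} is already in hand, this lemma presents no genuine obstacle; the only point deserving attention is uniformity --- verifying that none of the constants produced above depend on $n$ or on the path $\omega$, and absorbing the fixed factor $\diam X$ into a single constant $D$ so that the three estimates take precisely the stated form. A minor routine point is that the countable sums $\sum_{z\in Z^n}$ are handled by first summing over finite subsets of $Z^n$ and passing to the supremum, which the normalization identity makes legitimate.
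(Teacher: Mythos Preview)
Your proof is correct and follows essentially the same route as the paper. The only cosmetic difference is in item~(1): the paper uses the mean value theorem to compare $|g_{z,[\omega]_n}'(x)|$ directly to the cylinder length $|g_{z,[\omega]_n}(X)|$ (which then sums to $m(X)=1$ by the partition property), whereas you integrate the distortion bound in $y$ to compare to $\int_X|g_{z,[\omega]_n}'|\,dm$ and then invoke the normalization identity; these are equivalent manoeuvres. For item~(2) the paper again routes through the cylinder lengths while you use item~(1) directly, yielding the identical constant $D(D+1)$.
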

\begin{proof}
For (1) by Lemma \ref{lem:ndist} we have $g_{z, [\omega]_n}'(x)\le (D+1)|g_{z, [\omega]_n}(X)|$. Therefore, 
$$
L_\omega^n{\bf 1} =\sum_{z\in Z^n}|g_{z, [\omega]_n}'(x)| \le D+1.
$$
Note that (3) is implied by (1). Therefore, it remains to prove (2). 
For $x, y\in X$ we have 
$$\begin{aligned}
|(L^n_{\omega}{\bf 1})(x)-(L^n_{\omega}{\bf 1})(y)| \le \sum_{z\in Z^n}|g_{z, [\omega]_n}'(x)-g_{z, [\omega]_n}'(y)| \\
\le \sum_{z\in Z^n}|g_{z, [\omega]_n}'(x)|\left|\frac{g_{z, [\omega]_n}'(y)}{g_{z, [\omega]_n}'(x)}-1 \right| \le D(D+1)|x-y|\sum_{z\in Z^n}|g_{z, [\omega]_n}(X)|.
\end{aligned}
$$
\end{proof}

\begin{lemma}\label{LYonLip}
$L_{\PP_\eps}$ admits a spectral gap on the space of Lipschitz continuous functions.
\end{lemma}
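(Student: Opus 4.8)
The plan is to establish a Lasota--Yorke (Doeblin--Fortet) inequality for the iterates $L^n_{\PP_\eps}=(L_{\PP_\eps})^n$ on $\mathrm{Lip}(X)$, equipped with $\|\Phi\|_{\mathrm{Lip}}=\|\Phi\|_\infty+\Lip(\Phi)$, deduce quasi-compactness from it, and then pin down the peripheral spectrum using that every branch of every $T_\omega$ is onto. I will also indicate an equivalent, slightly more economical route via a Birkhoff cone contraction, which yields the uniqueness of the stationary density at the same time.

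By the i.i.d.\ structure and Fubini, $L^n_{\PP_\eps}\Phi=\int_{\Omega^n}L^n_\omega\Phi\,d\PP_\eps^{\otimes n}$ with $L^n_\omega\Phi=\sum_{z\in Z^n}\Phi\circ g_{z,[\omega]_n}\cdot|g_{z,[\omega]_n}'|$, and since the Lipschitz seminorm of an average is at most the average of the seminorms, it suffices to bound $L^n_\omega$ uniformly in the path $[\omega]_n$. Writing $g_z=g_{z,[\omega]_n}$ and splitting
\[
(L^n_\omega\Phi)(x)-(L^n_\omega\Phi)(y)=\sum_{z\in Z^n}\bigl(\Phi(g_z(x))-\Phi(g_z(y))\bigr)|g_z'(x)|+\sum_{z\in Z^n}\Phi(g_z(y))\bigl(|g_z'(x)|-|g_z'(y)|\bigr),
\]
the first sum is at most $\Lip(\Phi)\,\beta^n|x-y|\sum_z|g_z'(x)|$ since each inverse branch is $\beta$-Lipschitz by (A2), so that $|g_z(x)-g_z(y)|\le\beta^n|x-y|$, and the second is at most $\|\Phi\|_\infty\,D|x-y|\sum_z|g_z'(x)|$ by the iterated distortion estimate of Lemma~\ref{lem:ndist}; in both cases $\sum_z|g_z'(x)|=(L^n_\omega{\bf 1})(x)\le D+1$ by Lemma~\ref{totalcontrol}. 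Hence $\Lip(L^n_\omega\Phi)\le(D+1)\beta^n\Lip(\Phi)+D(D+1)\|\Phi\|_\infty$, and combining this with $\|L^n_\omega\Phi\|_\infty\le(D+1)\|\Phi\|_\infty$ (again Lemma~\ref{totalcontrol}) and integrating over $[\omega]_n$ yields the uniform Lasota--Yorke inequality
\[
\|L^n_{\PP_\eps}\Phi\|_{\mathrm{Lip}}\le(D+1)\beta^n\,\Lip(\Phi)+(D+1)^2\|\Phi\|_\infty ,
\]
with constants independent of $\eps$.

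Choosing $n_0$ with $(D+1)\beta^{n_0}<1$, and using that the inclusion $\mathrm{Lip}(X)\hookrightarrow C^0(X)$ is compact (Arzel\`a--Ascoli) and that $L_{\PP_\eps}$ is a bounded operator on $C^0(X)$ of norm $\le D+1$ (Lemma~\ref{totalcontrol}), Hennion's theorem gives that $L_{\PP_\eps}$ is quasi-compact on $\mathrm{Lip}(X)$ with essential spectral radius $\le\beta$; its spectral radius is $1$ because $L_{\PP_\eps}^{\ast}m=m$, and, being a positive quasi-compact operator, $L_{\PP_\eps}$ has a nonnegative fixed point $h_\eps\in\mathrm{Lip}(X)$. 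It remains to show that $1$ is simple and is the only spectral value on the unit circle; this is the step I expect to require the most care, and it is where the onto (full branch) hypothesis enters. Because every branch is onto, the random system is covering: for $0\le\Phi\in\mathrm{Lip}(X)$, $\Phi\not\equiv0$, one has $L^n_{\PP_\eps}\Phi>0$ everywhere for $n$ large; together with the distortion bound (which gives two-sided bounds on any fixed density) this forces $h_\eps$ to be unique and bounded below by a positive constant, so $1$ is simple, and if $L_{\PP_\eps}\phi=e^{i\theta}\phi$ then $|\phi|\le L_{\PP_\eps}|\phi|$ with equal $m$-integrals gives $|\phi|=c\,h_\eps$, whence $\phi=h_\eps u$ with $|u|\equiv1$ and, by strict convexity of the disc, $u\circ g_{z,\omega}=e^{i\theta}u$ for every branch and $\PP_\eps$-a.e.\ $\omega$; iterating this and using that the cylinders $g_{z,[\omega]_n}(X)$ shrink to points forces $u$ constant and $e^{i\theta}=1$, so the spectral gap follows. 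Alternatively, Lemma~\ref{lem:ndist} shows directly that $L_{\PP_\eps}$ maps the cone $\{\Phi\in\mathrm{Lip}(X):\Phi>0,\ \Phi(x)\le e^{a|x-y|}\Phi(y)\ \text{for all }x,y\}$ with $a=2D/(1-\beta)$ strictly into itself with image of finite diameter in the Hilbert projective metric, and the ensuing uniform contraction delivers the spectral gap on $\mathrm{Lip}(X)$ and the uniqueness of $h_\eps$ in one stroke; the finiteness of that diameter is then the delicate point.
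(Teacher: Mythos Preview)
Your proof is correct and follows essentially the same route as the paper: the same splitting of $(L^n_\omega\Phi)(x)-(L^n_\omega\Phi)(y)$, the same use of (A2) and the iterated distortion bound to obtain the Lasota--Yorke inequality $\Lip(L^n_\omega\Phi)\le(D+1)\beta^n\Lip(\Phi)+D(D+1)\|\Phi\|_\infty$, and the same passage to quasi-compactness. The only notable difference is in pinning down the peripheral spectrum: the paper argues directly that any Lipschitz stationary density must be strictly positive (since the branches are onto and uniformly expanding), whence $1$ is simple, and then rules out other unit-circle eigenvalues by observing that every iterate $L^n_{\PP_\eps}$ likewise has a unique invariant density; your argument via $|\phi|\le L_{\PP_\eps}|\phi|$, strict convexity, and shrinking cylinders is a standard and slightly more explicit alternative that reaches the same conclusion.
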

\begin{proof}
Let $[\omega]_n=\omega_0, \dots, \omega_{n-1}$ be a word of length $n$ and consider 
$$\begin{aligned}
&|L_\omega^n\Phi(x)-L_\omega^n\Phi(y)| = |\sum_{z\in Z^n} (\Phi\circ g_{z, [\omega]_n} |g_{z, [\omega]_n}'|)(x) -(\Phi\circ g_{z, [\omega]_n} |g_{z, [\omega]_n}'|)(y)| \\
&\le\sum_{z\in Z^n} \{ |\Phi(g_{z, [\omega]_n}(x))-\Phi(g_{z, [\omega]_n}(y))||g_{z, [\omega]_n}'(x)|\\
&\hskip 4cm+|\Phi(g_{z, [\omega]_n}(y))|| |g_{z, [\omega]_n}'(x)|-|g_{z, [\omega]_n}'(y)| |\}\\
&\le \Lip(\Phi)\beta^n |x-y| \sum_{z\in Z^n}|g_{z, [\omega]_n}'(x)| + ||\Phi||_\infty\Lip (L^n_{\omega}{\bf 1})|x-y| \\
&\le  (D+1)\beta^n \Lip(\Phi) |x-y|+D (D+1)||\Phi||_{\infty}|x-y|.
\end{aligned}
$$
Therefore,
$$\Lip (L^n_{\omega}\Phi)\le (D+1)\beta^n \Lip(\Phi)+D(D+1) ||\Phi||_{\infty}.$$
Now fix $n_1$ large enough so that $(D+1)\beta^{n_1}<1$. Define $\kappa:=(D+1)^{\frac{1}{n_1}}\beta<1$. We have 
$$
\Lip(L_\omega^{n_1}\Phi) \le \kappa\Lip(\Phi)+D(D+1)\|\Phi\|_\infty.
$$
Using (3) of Lemma \ref{totalcontrol}, for any $n\ge n_1$ we have
\begin{equation}\label{LipLY}
\|L_{\omega}^n\Phi\|_{\Lip}\le \kappa^n\|\Phi\|_{\Lip}+D_1\|\Phi\|_\infty,
\end{equation}
$D_1:=\frac{D(D+1)}{1-\kappa}+D(D+1).$ Consequently, for any $n\ge n_1$  
$$
\|L_{\PP_\eps}^{n}\Phi\|_{\Lip}\le \kappa^n\|\Phi\|_{\Lip}+D_1\|\Phi\|_\infty.
$$
This implies that the operators $L_{\PP_\eps}$ are quasi-compact on the space of Lipschitz continuous functions and their essential spectral radius
 is uniformly bounded by $\kappa$. Moreover, any stationary density $h_\eps\in C^{\Lip}$. It remains to show that the peripheral spectrum of $L_{\PP_\eps}$ consists only of $1$ and that $1$ is a simple eigenvalue. Indeed, if $h_\eps$ is a stationary density, then using the fact that $h_\eps$ is a $C^{\Lip}$ function, there exists an open interval $J\subset [0,1]$, such that ${h_{\eps}}_{|J}>0$. Since all the maps $T_\omega$ are piecewise onto and uniformly expanding ($\sup_{x,\omega}|g'_{z,\omega}|\le \beta<1$), there exists an $n\in\mathbb N$ such that $L^n_{\PP_\eps}({h_{\eps}}_{|J})(x)>0$. Consequently, $h_\eps(x)=L^n_{\PP_\eps}({h_{\eps}})(x)\ge L^n_{\PP_\eps}({h_{\eps}}_{|J})(x)>0$. Thus, $h_{\eps}$ is strictly positive\footnote{The argument about the strict positivity of $h_\eps$ is borrowed from \cite{SS}.} on $[0,1]$. This implies that $h_\eps$ is unique and that 1 is a simple eigenvalue of $L_{\PP_\eps}$. The fact that no other eigenvalues of $L_{\PP_\eps}$ are of modulus 1 follows from quasi-compactness of $L_{\PP_\eps}$ on $C^{\Lip}$ and repeating the above argument to show that any iterate of $L_{\PP_\eps}$ has a unique invariant density.
 \end{proof}
 \begin{lemma}\label{C1gap}\text{ }
 \begin{enumerate}
 \item[(a)] $L_{\PP_\eps}$ admit a uniform spectral gap on $C^1$. In particular their essential spectral radii on $C^1$ is uniformly bounded by above by some $\kappa\in(0,1)$.
 \item[(b)] There exists a $C>0$, such that for any $n\in \NN$ and $\Phi\in C^1$, we have 
 $$||L_{\PP_\eps}^n\Phi||_{C^1}\le C||\Phi||_{C^1}.$$
 \end{enumerate}
 \end{lemma}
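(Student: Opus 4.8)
The plan is to transplant to $C^1$ the scheme used for $C^{\Lip}$ in the proof of Lemma~\ref{LYonLip}: a uniform Lasota--Yorke inequality, quasi-compactness from compactness of the embedding $C^1\hookrightarrow C^0$, identification of the peripheral spectrum, and finally uniformity in $\eps$.

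\emph{Lasota--Yorke inequality and (b).} Fix a path $[\omega]_n$ and $z\in Z^n$, and abbreviate $g=g_{z,[\omega]_n}$, which is monotone and $C^3$. Then
\[
\big((\Phi\circ g)\,|g'|\big)'=(\Phi'\circ g)\,g'\,|g'|+(\Phi\circ g)\,\sign(g')\,g''.
\]
By (A2) we have $|g'|\le\beta^n$, so the first term is $\le\beta^n\|\Phi'\|_\infty|g'|$; letting $y\to x$ in the distortion estimate of Lemma~\ref{lem:ndist} gives $|g''|\le D|g'|$, so the second term is $\le D\|\Phi\|_\infty|g'|$. Summing over $z\in Z^n$ and using $\sum_{z\in Z^n}|g'_{z,[\omega]_n}|\le D+1$ (Lemma~\ref{totalcontrol}(1))---which also shows $L^n_\omega\Phi\in C^1$---gives $\|(L^n_\omega\Phi)'\|_\infty\le(D+1)\beta^n\|\Phi'\|_\infty+D(D+1)\|\Phi\|_\infty$. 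Averaging over $\omega$ against the product measure $\PP_\eps^{\,n}$ (so that $L^n_{\PP_\eps}=\int_{\Omega^n}L^n_\omega\,d\PP_\eps^{\,n}$, the derivative passing inside by the uniform bound) and adding $\|L^n_{\PP_\eps}\Phi\|_\infty\le(D+1)\|\Phi\|_\infty$ (Lemma~\ref{totalcontrol}(3)) yields, for every $n$ and every $\eps\in V$,
\[
\|L^n_{\PP_\eps}\Phi\|_{C^1}\le(D+1)\beta^n\|\Phi\|_{C^1}+(D+1)^2\|\Phi\|_\infty.
\]
The right-hand side is $\le\big((D+1)+(D+1)^2\big)\|\Phi\|_{C^1}$ uniformly in $n,\eps$, which is (b). Choosing $n_1$ with $(D+1)\beta^{n_1}<1$ and iterating exactly as in Lemma~\ref{LYonLip} produces $\|L^n_{\PP_\eps}\Phi\|_{C^1}\le C\kappa^n\|\Phi\|_{C^1}+C\|\Phi\|_\infty$ for all $n$, with $\kappa:=\big((D+1)\beta^{n_1}\big)^{1/n_1}<1$ and $C$ independent of $\eps$.

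\emph{Quasi-compactness and peripheral spectrum.} Since $C^1\hookrightarrow C^0$ is compact, the displayed inequality and the Ionescu-Tulcea--Marinescu/Hennion theorem make $L_{\PP_\eps}$ quasi-compact on $C^1$ with essential spectral radius $\le\kappa<1$, uniformly in $\eps$. Any modulus-one eigenfunction of $L_{\PP_\eps}|_{C^1}$ lies in $C^1\subset C^{\Lip}$, hence is a modulus-one eigenfunction of $L_{\PP_\eps}|_{C^{\Lip}}$; by Lemma~\ref{LYonLip} the only one is the simple eigenvalue $1$ with eigenfunction $\RR h_\eps$. As $L_{\PP_\eps}$ preserves $\int\!\cdot\,dm$ and $\1\in C^1$, the spectral radius of $L_{\PP_\eps}$ on $C^1$ is exactly $1$, so by quasi-compactness $1$ is a genuine eigenvalue (in particular $h_\eps\in C^1$), and the uniform bound $\|L^n_{\PP_\eps}\|_{C^1}\le C$ rules out a Jordan block; hence $1$ is semisimple, $C^1=\RR h_\eps\oplus C^1_0$ with $C^1_0=\{\Phi\in C^1:\int_X\Phi\,dm=0\}$ invariant, and $L_{\PP_\eps}$ has spectral radius $<1$ on $C^1_0$.

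\emph{Uniformity of the gap---the main obstacle.} It remains to see the gap is uniform: $\|L^n_{\PP_\eps}\Phi\|_{C^1}\le C\theta^n\|\Phi\|_{C^1}$ on $C^1_0$ with $\theta<1$ and $C$ independent of $\eps$. The Lasota--Yorke constants $D,\beta,\kappa$ are already uniform; what is needed in addition is a uniform-in-$\eps$ mixing estimate, which comes from the hypothesis that all $T_\omega$ are onto each branch with $\sup_{z,\omega,x}|g'_{z,\omega}(x)|\le\beta<1$, so that the random system is uniformly covering: there exist $n_0,\delta_0>0$, independent of $\eps$, with $L^{n_0}_{\PP_\eps}\Phi\ge\delta_0\int\Phi\,dm$ for $\Phi\ge0$ (equivalently $\inf_X h_\eps\ge\delta_0$, obtained from the strict-positivity argument of Lemma~\ref{LYonLip} made quantitative via the uniform Lipschitz bound on $h_\eps$ and the uniform mesh bound $\beta^{n_0}$ of the $n_0$-step partitions). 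Combining this uniform minorization with the uniform Lasota--Yorke inequality---e.g. through a Birkhoff-cone contraction for $\{\Phi\in C^1:\Phi>0,\ |\Phi'|\le a\Phi\}$ with $a$ dictated by $D$ and $\beta$---yields the uniform contraction on $C^1_0$, and hence the uniform spectral gap. This uniformity step is the real content beyond the verbatim transplantation of Lemma~\ref{LYonLip}; the rest is routine.
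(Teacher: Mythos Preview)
Your argument is correct, but it is considerably more elaborate than the paper's, and the extra work you do in the ``uniformity of the gap'' paragraph is unnecessary.

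The paper's proof is a two-line transfer from $C^{\Lip}$. It simply observes that (i) by Lemma~\ref{LYonLip} the family $L_{\PP_\eps}$ already enjoys a \emph{uniform} spectral gap on $C^{\Lip}$, (ii) $L_{\PP_\eps}$ preserves $C^1$ (a one-step computation using~\eqref{eq:avgdist}), and (iii) for $C^1$ functions the Lipschitz norm and the $C^1$ norm coincide. From (i)--(iii) the uniform gap on $C^1$ follows immediately: for $\Phi\in C^1$ with $\int\Phi\,dm=0$ one has $L^n_{\PP_\eps}\Phi\in C^1$ by (ii), hence
\[
\|L^n_{\PP_\eps}\Phi\|_{C^1}=\|L^n_{\PP_\eps}\Phi\|_{\Lip}\le C\theta^n\|\Phi\|_{\Lip}=C\theta^n\|\Phi\|_{C^1},
\]
with $C,\theta$ the uniform constants from Lemma~\ref{LYonLip}. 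Part~(b) is then a consequence of~(a). No Birkhoff cones, no uniform minorization, no quantitative covering estimate are needed.

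Your route reproves the Lasota--Yorke inequality directly in $C^1$ (via $|g''|\le D|g'|$, which is a nice use of Lemma~\ref{lem:ndist}), invokes Hennion for quasi-compactness, and then borrows the peripheral-spectrum analysis from $C^{\Lip}$. All this is fine and gives the same constants. Where you diverge is in treating the uniformity of the gap as ``the real content'': in fact it is free once you have the uniform gap on $C^{\Lip}$, by the norm-identification argument above. Your Birkhoff-cone/uniform-minorization sketch would work, but it is an additional layer of machinery that the paper avoids entirely.
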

 \begin{proof}
By Lemma \ref{LYonLip} the family of operators $L_{\PP_\eps}$ admit a uniform spectral gap on $C^{\Lip}$. Moreover, the operators $L_{\PP_\eps}$ preserve the space of $C^1$ functions. Indeed,
\begin{equation*}
|(L_{\PP_{\eps}}\Phi)(x)|=|\sum_{z\in Z}\int_{\Omega}\Phi\circ g_{z,\omega}(x)|g_{z,\omega}'|d\PP_\eps(\omega)|\le\tilde D||\Phi||_{C^0};
\end{equation*}
and moreover,
\begin{equation*}
\begin{split}
&|(L_{\PP_{\eps}}\Phi)'(x)|=\\
&|\sum_{z\in Z}\int_{\Omega}\Phi'\circ g_{z,\omega}(x)|g_{z,\omega}'(x)|g_{z,\omega}'+\Phi\circ g_{z,\omega} (x)\sign(g_{z,\omega}'(x)) g_{z,\omega}''(x)d\PP_\eps(\omega)|\\
&\le (L_{\PP_{\eps}}|\Phi'(x)|)+M\|\Phi\|_{C^0}.
\end{split}
\end{equation*}
Since for $C^1$ functions the Lipschitz norm is the same as the $C^1$-norm, this implies that the family of operators $L_{\PP_\eps}$ admit a uniform spectral gap on $C^{1}$. The uniform upper bound on their spectral radii follows from \eqref{LipLY}. This proves (a) of the Lemma and (b) is a consequence of (a).
 \end{proof}
Next, before proving a spectral gap of $L_{\PP_\eps}$ on $ C^2$, we prove that condition~\eqref{eq:avgdist} of (A1) can be iterated.
\begin{lemma}\label{niterate} For any $n$ and $i=2,3$ we have 
\[
\sup_{\eps\in V}\sum_{z\in Z^n}\sup_{x\in X}\int_{\Omega^n}|g^{(i)}_{z,[\omega]_n}|d\PP_\eps^{n}([\omega]_n)<\infty.
\]
\end{lemma}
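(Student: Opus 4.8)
The plan is to prove the bound for each fixed $n$ by induction on $n$, peeling off one layer of the composition at a time. The base case $n=1$ is exactly assumption \eqref{eq:avgdist}. For the inductive step, decompose the $n$-fold composition as $g_{z,[\omega]_n}=g_{z_\ast,\omega_\ast}\circ\gamma$, where $g_{z_\ast,\omega_\ast}$ is the outermost single-branch factor (depending on one coordinate $\omega_\ast$ of the path and one letter $z_\ast$ of the word) and $\gamma:=g_{z',[\omega]_{n-1}}$ is the remaining $(n-1)$-fold inverse branch, which does \emph{not} involve $\omega_\ast$; here $z=z'z_\ast\in Z^n$ with $z'\in Z^{n-1}$. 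The chain rule gives
\[
g_{z,[\omega]_n}''=(g_{z_\ast,\omega_\ast}''\circ\gamma)\,(\gamma')^2+(g_{z_\ast,\omega_\ast}'\circ\gamma)\,\gamma'' ,
\]
\[
g_{z,[\omega]_n}'''=(g_{z_\ast,\omega_\ast}'''\circ\gamma)\,(\gamma')^3+3\,(g_{z_\ast,\omega_\ast}''\circ\gamma)\,\gamma'\gamma''+(g_{z_\ast,\omega_\ast}'\circ\gamma)\,\gamma''' ,
\]
and one bounds $\sum_{z'}\sum_{z_\ast}\sup_x\int_{\Omega^n}|\cdot|\,d\PP_\eps^n$ term by term.

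The tools are: (a) bounded distortion (Lemma~\ref{lem:ndist}) together with the piecewise onto property, which give $|\gamma'(x)|\le(1+D)\,m(\gamma(X))$ with $\sum_{z'}m\bigl(g_{z',[\omega]_{n-1}}(X)\bigr)=1$, and likewise the $n=1$, $i=1$ analogue $\sum_{z_\ast}\sup_{y}\int_\Omega|g_{z_\ast,\omega_\ast}'(y)|\,d\PP_\eps(\omega_\ast)\le1+D$; (b) assumption (A2), by which $|\gamma'(x)|\le\beta^{n-1}<1$, so every power $(\gamma')^j$ with $j\ge1$ is $\le(1+D)\,m(\gamma(X))$; (c) the induction hypothesis $\sum_{z'\in Z^{n-1}}\sup_x\int_{\Omega^{n-1}}|g_{z',[\omega]_{n-1}}^{(i)}|\,d\PP_\eps^{n-1}<\infty$ for $i=2,3$. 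In each summand I integrate over $\omega_\ast$ first, with the argument of $g_{z_\ast,\omega_\ast}^{(i)}$ frozen at the $\omega_\ast$-independent point $\gamma(x)$, so that $\int_\Omega|g_{z_\ast,\omega_\ast}^{(i)}(\gamma(x))|\,d\PP_\eps(\omega_\ast)\le\sup_{y\in X}\int_\Omega|g_{z_\ast,\omega_\ast}^{(i)}(y)|\,d\PP_\eps(\omega_\ast)$; summing over $z_\ast$ this is $\le M$ for $i=2,3$ by \eqref{eq:avgdist} and $\le1+D$ for $i=1$ by (a). For the terms $(g_{z_\ast,\omega_\ast}'\circ\gamma)\,\gamma^{(i)}$ the factor $\gamma^{(i)}$ carries no $z_\ast$, so after the $\omega_\ast$-integration one sums $z_\ast$ against $1+D$ and applies the induction hypothesis to $\sum_{z'}\sup_x\int_{\Omega^{n-1}}|\gamma^{(i)}|\,d\PP_\eps^{n-1}$. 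For the remaining terms one sums $g_{z_\ast,\omega_\ast}^{(i)}\circ\gamma$ ($i=2,3$) against $M$ after the $\omega_\ast$-integration, absorbs the powers of $\gamma'$ via (a)--(b), handles the remaining $\gamma''$ in the $i=3$ cross term by the level-$(n-1)$ bound for $i=2$, and sums the surviving cylinder lengths $m(\gamma(X))$ to $1$. This yields the finite recursions $M_2^{(n)}\le(1+D)M+(1+D)M_2^{(n-1)}$ and $M_3^{(n)}\le C(1+D)M+3M\,M_2^{(n-1)}+(1+D)M_3^{(n-1)}$, which close the induction.

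The only real obstacle is the order of the operations $\sup_x$, $\int_\Omega(\cdot)\,d\PP_\eps(\omega_\ast)$ and $\sum_{z_\ast}$. Assumption \eqref{eq:avgdist} controls $\sum_z\sup_x\int_\Omega$ with the supremum \emph{inside} the $\Omega$-integral, which is precisely why the integration over $\omega_\ast$ must be performed innermost --- before the supremum over the argument point $\gamma(x)$ and before the sum over $z_\ast$. Moreover the sum over $z_\ast$ would diverge (the branch set $Z$ may be infinite) if a solitary factor $g_{z_\ast,\omega_\ast}'$ were bounded crudely by $\beta$; instead it must be kept and summed against the partition-of-unity estimate $\sum_{z_\ast}|g_{z_\ast,\omega_\ast}'(\cdot)|\le1+D$ from Lemma~\ref{totalcontrol}. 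Once these two points are respected, each term factors into a finite constant, an $M$- or $(1+D)$-controlled sum over $z_\ast$, a partition sum over $z'$ equal to $1$, and --- for the mixed terms --- a level-$(n-1)$ bound, and the lemma follows.
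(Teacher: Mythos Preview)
Your proof is correct and follows the same induction-on-$n$ strategy as the paper, using the chain rule to split the composition and controlling the pieces by (A1), (A2), bounded distortion, and the induction hypothesis. The only cosmetic difference is the direction of the peeling: the paper writes the $(n{+}1)$-fold branch as an $n$-fold outer factor composed with a single inner step $g_{z,\omega}\circ g_{z_n,\omega_n}$, whereas you take the single step as the outermost factor; both orderings lead to the same recursions and the same bounds.
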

\begin{proof}
We prove it by induction on $n$. For $i=2$, and $n=1$ the lemma is true by assumption. Assume it is true for $n$. Note that 
\[
(g_{z,\omega}\circ g_{z_n,\omega_n})'' = g_{z,\omega}'' \circ g_{z_n,\omega_n} \cdot g_{z_n,\omega_n}'^2 + g_{z,\omega}'\cdot g_{z_n,\omega_n}''
\]
We have for the first term 
\[
\begin{split}
\sum_{z,z_n} &\int_{\Omega^n}\int_\Omega
g_{z,\omega}'' \circ g_{z_n,\omega_n} \cdot g_{z_n,\omega_n}'^2 d\PP_\eps^{n}(\omega) d\PP_\eps(\omega_n) \\
&= \sum_{z_n}\int_\Omega g_{z_n,\omega_n}'^2 \left(\sum_{z\in Z^n}\int_{\Omega^n} g_{z,\omega}'' \circ g_{z_n,\omega_n} d\PP_\eps^{ n}(\omega)\right) d\PP_\eps(\omega_n) .
\end{split}
\]
The term inside the parenthesis is bounded by the induction hypothesis, and the remaining integral is bounded. The second term can be managed in the same way. The technique also works for the third derivative. Indeed, for $i=3$, we have
$$
\begin{aligned}
(g_{z,\omega}\circ g_{z_n,\omega_n})''' &= g_{z,\omega}''' \circ g_{z_n,\omega_n} \cdot g_{z_n,\omega_n}'^3+ 2g_{z_n,\omega_n}'\cdot g_{z_n,\omega_n}''\cdot g_{z,\omega}'' \circ g_{z_n,\omega_n}\\
&+g_{z,\omega}''\cdot g_{z_n,\omega_n}''+g_{z,\omega}'\cdot g_{z_n,\omega_n}'''.
\end{aligned}
$$
\end{proof}
\begin{remark}
The bounds in Lemma \ref{niterate} depend on $n$.
\end{remark}
\begin{lemma}
$L_{\PP_\eps}$ admits a uniform spectral gap on $C^2$. In particular, for any $\eps\in V$, the random dynamical system $(\Omega, \{T_\omega\},\PP_\eps\}$ admits a unique stationary density $h_\eps\in C^2$.
\end{lemma}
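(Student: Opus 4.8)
The strategy is to bootstrap from the uniform spectral gap on $C^1$ (Lemma~\ref{C1gap}) to $C^2$ by establishing a uniform Lasota--Yorke inequality in the $C^2$-norm and then invoking the usual quasicompactness/perturbation argument. First I would differentiate the transfer operator twice. Writing $g=g_{z,\omega}$, a direct computation (the sign of $g'$ is locally constant since $g$ is a diffeomorphism onto) gives
\[
(L_{T_\omega}\Phi)'' = \sum_z \Bigl( \Phi''(g)\,|g'|^3 + 3\,\Phi'(g)\,\sign(g')\,g'\,g'' + \Phi(g)\,\sign(g')\,g''' \Bigr),
\]
and the same identity holds for the $n$-th iterate $L^n_\omega$ with $g_{z,[\omega]_n}$ in place of $g_{z,\omega}$ and the sum running over $z\in Z^n$. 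Using assumption (A2) (so $|g'_{z,[\omega]_n}|\le\beta^n$ and hence $|g'_{z,[\omega]_n}|^3\le\beta^{2n}|g'_{z,[\omega]_n}|$), Lemma~\ref{totalcontrol}(3) (so $\|L^n_{\PP_\eps}|\Phi''|\|_\infty\le(D+1)\|\Phi''\|_\infty$), and Lemma~\ref{niterate} to bound $\sum_{z\in Z^n}\sup_x\int_{\Omega^n}|g^{(i)}_{z,[\omega]_n}|\,d\PP_\eps^n\le M_n$ for $i=2,3$ (with $M_n$ possibly depending on $n$ but not on $\eps$), one obtains after integrating against $\PP_\eps^n$ that
\[
\|(L^n_{\PP_\eps}\Phi)''\|_\infty \le \beta^{2n}(D+1)\,\|\Phi''\|_\infty + 4M_n\,\|\Phi\|_{C^1},
\]
uniformly in $\eps\in V$. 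Combining this with the uniform $C^1$ bound $\|L^n_{\PP_\eps}\Phi\|_{C^1}\le C\|\Phi\|_{C^1}$ of Lemma~\ref{C1gap}(b) yields $\|L^n_{\PP_\eps}\Phi\|_{C^2}\le \beta^{2n}(D+1)\|\Phi\|_{C^2}+C_n\|\Phi\|_{C^1}$ with $C_n$ uniform in $\eps$.

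Next I would fix $n_2$ large enough that $\mu_0:=(D+1)\beta^{2n_2}<1$ and run the submultiplicativity argument exactly as in the proof of Lemma~\ref{LYonLip}: writing a general $n$ as $kn_2+r$ with $0\le r<n_2$ and iterating the previous inequality, the uniform $C^1$ bound controls all the intermediate low-order terms, giving
\[
\|L^n_{\PP_\eps}\Phi\|_{C^2}\le C_1\lambda^n\|\Phi\|_{C^2}+C_2\|\Phi\|_{C^1}
\]
for all $n$ and all $\eps\in V$, where $\lambda:=\mu_0^{1/n_2}<1$ and $C_1,C_2$ are independent of $\eps$. Since the inclusion $C^2\hookrightarrow C^1$ is compact, the Hennion--Ionescu-Tulcea--Marinescu theorem shows that each $L_{\PP_\eps}$ is quasicompact on $C^2$ with essential spectral radius at most $\lambda$. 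Any eigenfunction of $L_{\PP_\eps}$ on $C^2$ associated with a peripheral eigenvalue lies in $C^2\subset C^1$ and is therefore a peripheral eigenfunction on $C^1$; by Lemma~\ref{LYonLip}--\ref{C1gap} the only such eigenvalue is $1$, and it is simple. Hence $L_{\PP_\eps}$ has a spectral gap on $C^2$.

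To get that the gap is \emph{uniform} in $\eps$, note that the subspace $C^2_0:=\{\Phi\in C^2:\int\Phi\,dm=0\}$ is independent of $\eps$ and $L_{\PP_\eps}$-invariant (transfer operators preserve the integral), and it is the spectral complement of $\mathrm{span}(h_\eps)$. For $\Phi\in C^2_0$ the uniform $C^1$ gap gives $\|L^n_{\PP_\eps}\Phi\|_{C^1}\le C\theta^n\|\Phi\|_{C^1}\le C\theta^n\|\Phi\|_{C^2}$ for some $\theta<1$ independent of $\eps$; feeding this into the uniform Lasota--Yorke inequality and choosing $m$ with $C_1\lambda^m<1/2$, the recursion $\|L^{n+m}_{\PP_\eps}\Phi\|_{C^2}\le \tfrac12\|L^n_{\PP_\eps}\Phi\|_{C^2}+C_2C\theta^n\|\Phi\|_{C^2}$ forces $\|L^n_{\PP_\eps}\Phi\|_{C^2}\le C_3\rho^n\|\Phi\|_{C^2}$ with $\rho:=\max(\theta,2^{-1/m})<1$, all constants uniform in $\eps$. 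This is the uniform spectral gap on $C^2$. Finally, the spectral gap produces a fixed point $h_\eps\in C^2$ of $L_{\PP_\eps}$; being also a fixed point in $C^1$ it must coincide with the unique stationary density of Lemma~\ref{LYonLip}, which is thus in $C^2$ and unique.

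The main obstacle is the second-derivative Lasota--Yorke estimate, and specifically the uniform (in $\eps$) control of the cross terms $3\Phi'(g)\sign(g')g'g''$ and $\Phi(g)\sign(g')g'''$ after iteration: this is exactly where the averaged bounds \eqref{eq:avgdist} of (A1) and their iterability (Lemma~\ref{niterate}) are needed, and where one must accept that the low-order constants $M_n$ may grow with $n$ while relying on $\beta^{2n}(D+1)\to0$ (from (A2)) to close the estimate.
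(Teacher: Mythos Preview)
Your proof is correct and follows essentially the same route as the paper: compute the second derivative of $L^n_{\PP_\eps}\Phi$, use (A2) to get the $\beta^{2n}(D+1)$ contraction on the highest-order term, use Lemma~\ref{niterate} to control the cross terms by $M_n\|\Phi\|_{C^1}$, fix $n_2$ so that $(D+1)\beta^{2n_2}<1$, and iterate using the uniform $C^1$ bound of Lemma~\ref{C1gap}(b). The paper then simply says the spectral gap argument is ``analogous to that in the proof of Lemma~\ref{LYonLip}'', whereas you spell out the Hennion/ITM step and give an explicit recursion on $C^2_0$ to extract the \emph{uniformity} of the gap from the uniform $C^1$ gap; this extra care is welcome but not a different strategy.
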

 \begin{proof}
For $\Phi\in C^2(X)$, by definition we have 
$$
L^n_{\PP_{\eps}}\Phi=\sum_{z\in Z^n}\int_{\Omega^n}\Phi\circ g_{z,[\omega_n]}|g_{z,[\omega_n]}'|d\PP^n_\eps([\omega_n]).
$$ 
By Lebesgue differentiation theorem 
\begin{equation}\label{LPPprime}
(L_{\PP_{\eps}}^n\Phi)'=\sum_{z\in Z^n}\int_{\Omega^n}\Phi'\circ g_{z,[\omega]_n}|g_{z,[\omega]_n}'|g_{z,[\omega]_n}'+\Phi\circ g_{z,[\omega]_n} \sign(g_{z,[\omega]_n}') g_{z,[\omega]_n}''d\PP^n_\eps([\omega]_n).
\end{equation}
Since $g_{z,[\omega]_n}$ is monotone for any $z\in Z^n$, again  by Lebesgue differentiation theorem for the second derivative we have 
\begin{equation}\label{LPP2prime}
\begin{aligned}
&(L_{\PP_{\eps}^n}\Phi)''=\sum_{z\in Z^n}\int_{\Omega^n}\Phi''\circ g_{z,[\omega]_n}|g_{z,[\omega]_n}'|(g_{z,[\omega]_n}')^2\\
&+ 3\sign(g_{z,[\omega]_n}')\Phi'\circ g_{z,[\omega]_n} g_{z,[\omega]_n}'g_{z,[\omega]_n}'' +\sign(g_{z,[\omega]_n}')\Phi\circ g_{z,[\omega]_n} g_{z,[\omega]_n}'''d\PP^n_\eps([\omega]_n).
\end{aligned}
\end{equation}
Using \eqref{LPPprime} \eqref{LPP2prime} we obtain  
\begin{align*}
{\|L_{\PP_{\eps}}^n\Phi\|}_{C^2}
&\le \sup_x\sum_{z\in Z^n}\int_{\Omega^n} |\Phi''\circ g_{z,[\omega]_n}||g_{z,[\omega]_n}'|^3
+ 3|\Phi'\circ g_{z,[\omega]_n}| |g_{z,[\omega]_n}'| |g_{z,[\omega]_n}''| \\
&+|\Phi\circ g_{z,[\omega]_n}| |g_{z,[\omega]_n}'''| +|\Phi'\circ g_{z,[\omega]_n}||g_{z,[\omega]_n}'|^2+|\Phi\circ g_{z,[\omega]_n}| |g_{z,[\omega]_n}''|\\
&+|\Phi\circ g_{z,[\omega]_n}||g_{z,[\omega]_n}'|
d\PP^n_\eps([\omega]_n).
\end{align*}
Therefore, using Lemma \ref{totalcontrol} and \ref{niterate} we have
\begin{align*}
{\|L^n_{\PP_{\eps}}\Phi\|}_{C^2}
&\le \beta^{2n} (D+1)\|\Phi\|_{C^2}+M_n||\Phi||_{C^1}.
\end{align*}
Fix $n_2$ large enough so that $\beta^{2n_2}(D+1)\le\gamma<1$. We have
\begin{align*}
{\|L^{n_2}_{\PP_{\eps}}\Phi\|}_{C^2}  \le \gamma|\Phi\|_{C^2}+M_{n_2}||\Phi||_{C^1}.
\end{align*}
By using, (2) of Lemma \ref{C1gap}, we iterate the above inequality (dropping $n_2$ from the notation of $L_{\PP_\eps}$ for simplicity) and get for any $n\ge 1$
\begin{align*}
{\|L^{n}_{\PP_{\eps}}\Phi\|}_{C^2}  \le \gamma^n|\Phi\|_{C^2}+\frac{C\cdot M_{n_2}}{1-\gamma}||\Phi||_{C^1}.
\end{align*}
This implies that $L_{\PP_\eps}$ is quasi-compact on $C^2$ with $\gamma$ as a uniform upper bound on the essential spectral radius. The proof of spectral gap is analogous to that in the proof of Lemma \ref{LYonLip}. 
\end{proof}
\subsection{Spectral gap for the transfer operator associated with Gauss-R\'enyi random maps}\label{SGGR}
Let \(\G(x)=\frac 1x \mod 1\) and \(\R(x)=\frac{1}{1-x}\mod 1\).
The inverse branches of \(\G\) and \(\R\) are given by 
\[
\G_n^{-1}= \frac{1}{n+x}, \R^{-1}_n(x)=1- \frac{1}{n+x} \text{ for any }  x\in\left(0, 1\right).
\]
Consequently we have 
\[
(\G_n^{-1})^\prime(x)=-\frac{1}{(n+x)^2}, (\R_n^{-1})^\prime(x)=\frac{1}{(n+x)^2}.
\]
Below we will be interested in compositions \(\G_n^{-1}\circ \G_k^{-1} \), \(\G_n^{-1}\circ \R_k^{-1} \), \(\R_n^{-1}\circ \G_k^{-1} \), \(\R_n^{-1}\circ \R_k^{-1} \). 

Thus it is useful to have exact form of the above compositions and their derivatives.
\begin{align}
&\label{GG}(\G_n^{-1}\circ \G_k^{-1})(x)=\frac{1}{n+\frac{1}{k+x}}=\frac{k+x}{n(k+x)+1},\\
&\label{RR}(\R_n^{-1}\circ \R_k^{-1})(x)=1-\frac{1}{n+1-\frac{1}{k+x}}=1-\frac{k+x}{(n+1)(k+x)-1}.
\end{align}
\begin{remark}\label{Rem:1}
Notice that 
\[
(\R_n^{-1}\circ \G_k^{-1})(x)=1-(\G_n^{-1}\circ \G_k^{-1})(x), (\R_n^{-1}\circ \R_k^{-1})(x)=1-(\G_n^{-1}\circ \R_k^{-1})(x).
\]
Consequently, 
\[
(\R_n^{-1}\circ \G_k^{-1})'(x)=-(\G_n^{-1}\circ \G_k^{-1})'(x), (\R_n^{-1}\circ \R_k^{-1})'(x)=-(\G_n^{-1}\circ \R_k^{-1})(x).
\]
\end{remark}
In the light of remark \ref{Rem:1} it is sufficient to consider the two cases \(\G\circ \G\) and \(\R\circ\R\). The cases \(\R\circ \G\) and \(\G\circ\R\) then satisfy the same type of estimates.   By the chain rule we have 
\begin{align}
&\label{GG'}(\G_n^{-1}\circ \G_k^{-1})'(x)=\frac{1}{\left(n+\frac{1}{k+x}\right)^2}\frac{1}{(k+x)^2}=\frac{1}{(n(k+x)+1)^2},\\
&\label{RR'}(\R_n^{-1}\circ \R_k^{-1})'(x)=\frac{1}{\left(n+1-\frac{1}{k+x}\right)^2}\frac{1}{(k+x)^2}=\frac{1}{((n+1)(k+x)-1)^2},\\
&\label{GG''}(\G_n^{-1}\circ \G_k^{-1})''(x)=-\frac{2n}{(n(k+x)+1)^3}, \\
&\label{RR''}(\R_n^{-1}\circ \R_k^{-1})''(x)=-\frac{2(n+1)}{((n+1)(k+x)-1)^3}\\
&\label{GG'''}(\G_n^{-1}\circ \G_k^{-1})'''(x)=\frac{6n^2}{(n(k+x)+1)^4}\\ 
&\label{RR'''}(\R_n^{-1}\circ \R_k^{-1})'''(x)=-\frac{6(n+1)^2}{((n+1)(k+x)-1)^4}
\end{align}
The following lemma is a straightforward consequence of \eqref{GG'} - \eqref{RR''}.
\begin{lemma}\label{01}Let \(g_{n, k}\in\{\Phi\circ \Psi\mid \Phi, \Psi\in\{\G_n^{-1}, \R_k^{-1}\}\}\). Then  for any \(n, k\in \NN\) and \(x\in (0, 1)\) hold
\begin{equation}\label{gnk'}
|g_{n, k}^{(i)}(x)|\le \frac{i!}{n^2k^{i+1}}, \quad \text{for \(i=1, 2, 3\)}.
\end{equation}
Moreover, if  \(g_{n, k}=\G_n^{-1}\circ \G_n^{-1}\) then
\begin{equation}\label{GG'nk}
|g_{n, k}'(x)|\le \frac{1}{4} \quad \text{for any \(n, k\in \NN\) and \(x\in (0, 1)\).}
\end{equation}
\end{lemma}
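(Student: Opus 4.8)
The proof is a direct computation from the closed forms already derived, so the plan is essentially bookkeeping. First I would invoke Remark~\ref{Rem:1} to reduce the four possibilities for \(g_{n,k}\) to the two ``pure'' ones: since \(\R_n^{-1}\circ\G_k^{-1}=1-\G_n^{-1}\circ\G_k^{-1}\) and \(\G_n^{-1}\circ\R_k^{-1}=1-\R_n^{-1}\circ\R_k^{-1}\), the derivatives of each mixed composition coincide, up to sign, with those of one of \(\G_n^{-1}\circ\G_k^{-1}\) or \(\R_n^{-1}\circ\R_k^{-1}\); hence it suffices to bound \(|(\G_n^{-1}\circ\G_k^{-1})^{(i)}(x)|\) and \(|(\R_n^{-1}\circ\R_k^{-1})^{(i)}(x)|\) for \(i=1,2,3\) and \(x\in(0,1)\).

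For the \(\G\circ\G\) composition, \eqref{GG'}--\eqref{GG'''} give \(|(\G_n^{-1}\circ\G_k^{-1})^{(i)}(x)|=i!\,n^{i-1}/(n(k+x)+1)^{i+1}\), and the only input needed is the elementary bound \(n(k+x)+1>nk\), which holds because \(x>0\) and \(k\ge1\); hence
\[
|(\G_n^{-1}\circ\G_k^{-1})^{(i)}(x)| < \frac{i!\,n^{i-1}}{(nk)^{i+1}} = \frac{i!}{n^2k^{i+1}},
\]
which is \eqref{gnk'} in this case. For \eqref{GG'nk} I would use instead the sharper bound \(n(k+x)+1>nk+1\ge2\) (again since \(n,k\ge1\)), giving \(|(\G_n^{-1}\circ\G_k^{-1})'(x)|<1/4\).

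For the \(\R\circ\R\) composition, \eqref{RR'}--\eqref{RR'''} give \(|(\R_n^{-1}\circ\R_k^{-1})^{(i)}(x)|=i!\,(n+1)^{i-1}/((n+1)(k+x)-1)^{i+1}\), and the analogous elementary estimate is \((n+1)(k+x)-1>(n+1)k-1\ge nk\) (using \(x>0\) and \(k\ge1\)). Combined with \((n+1)^{i-1}\le(2n)^{i-1}\) in the numerator, this again gives a bound of the form \(i!/(n^2k^{i+1})\) (with constant \(1\) when \(i=1\)), and by the first paragraph the two mixed compositions are then covered as well. The argument is routine; the one point to watch is precisely this \(\R\circ\R\) case, where the denominator only exceeds \(nk\) and the numerator carries the factor \((n+1)^{i-1}\) rather than \(n^{i-1}\), so the cancellation that is automatic for \(\G\circ\G\) must be tracked with slightly more care.
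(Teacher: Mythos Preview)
Your approach is exactly what the paper has in mind: it records the lemma as ``a straightforward consequence of \eqref{GG'}--\eqref{RR''}'' and gives no further argument, and your reduction via Remark~\ref{Rem:1} together with the elementary lower bounds on the denominators is the natural way to read that.

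Your caution about the $\R\circ\R$ case is well placed, and in fact the issue you flag is real rather than merely cosmetic. With $(n+1)(k+x)-1\ge nk$ and $(n+1)^{i-1}\le (2n)^{i-1}$ you obtain $|(\R_n^{-1}\circ\R_k^{-1})^{(i)}(x)|\le 2^{i-1}\,i!/(n^2k^{i+1})$, not $i!/(n^2k^{i+1})$, and this loss of the factor $2^{i-1}$ for $i=2,3$ cannot be repaired: taking $n=k=1$ in \eqref{RR''} gives $|(\R_1^{-1}\circ\R_1^{-1})''(x)|=4/(1+2x)^3\to 4$ as $x\to0^+$, which exceeds $2!/(1^2\cdot1^3)=2$. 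So the exact constant $i!$ stated in \eqref{gnk'} is not attained in the $\R\R$ (hence also $\G\R$) case for $i\ge2$; your bound with the extra $2^{i-1}$ is the correct one. This is harmless for the application, since in \eqref{LFiPsif'}--\eqref{LFiPsif''} only the summability of $\sum_{n,k}\|g_{n,k}^{(i)}\|_{C^0}$ and a uniform numerical constant are used, and your estimate delivers both; but you should not claim to have recovered the constant $i!$ for all four compositions, because it is simply false.
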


Recall that the transfer operator associated to the random Gauss-R\'eyni map has the following form
\[
L_{\PP_\eps}f=(1-\eps)^2L_{\G\circ\G}f+\eps(1-\eps)L_{\G\circ\R}f+\eps(1-\eps)L_{\R\circ\G}f+\eps^2 L_{\R\circ\R}f.
\]
\begin{lemma}
\(L_{\PP_\eps}\) admits uniform spectral gap in \(C^i\), \(i=1, 2\).
\end{lemma}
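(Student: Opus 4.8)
The plan is to establish a uniform Lasota--Yorke inequality for the second iterate $\L_\eps:=L_{\PP_\eps}^2$ --- which, after expanding the composition, is exactly the operator displayed above --- deduce uniform quasi-compactness on $C^1$ and $C^2$, and then promote quasi-compactness to a uniform spectral gap using the random covering property of the Gauss--R\'enyi system. A uniform spectral gap for $L_{\PP_\eps}$ on $C^i$ follows at once from one for $\L_\eps$, since the stationary density is a strictly positive fixed point of $L_{\PP_\eps}$, so $-1$ cannot lie in its peripheral spectrum.

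First I would differentiate $\L_\eps f$ once and twice term by term; the bounds $|g_{n,k}^{(i)}(x)|\le i!/(n^2k^{i+1})$ of Lemma~\ref{01} make all the resulting series uniformly convergent, so this is legitimate, and for each two-step inverse branch $g=g_{n,k}$, writing its contribution to $\L_\eps f$ as $f\circ g\cdot|g'|$, one has $(f\circ g\cdot|g'|)'=\pm f'\!\circ g\cdot|g'|^2+f\!\circ g\cdot\sign(g')\,g''$ and, similarly, a second-derivative identity whose leading term is $\pm f''\!\circ g\cdot|g'|^3$. Collecting terms and taking sup norms yields, for $i=1,2$,
\[
\|\L_\eps f\|_{C^i}\le \Theta_i(\eps)\,\|f\|_{C^i}+C_i\,\|f\|_{C^{i-1}},
\qquad
\Theta_i(\eps)=\sup_x\sum_{g}w_g(\eps)\,|g'(x)|^{i+1},
\]
where the sum runs over the four families of two-step inverse branches, $w_g(\eps)\in\{(1-\eps)^2,\eps(1-\eps),\eps^2\}$ is the corresponding weight, and $C_i<\infty$ collects the lower-order contributions (all finite by Lemma~\ref{01}). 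The key point is that at $\eps=0$ only the Gauss--Gauss family survives, and the contraction bound $|(\G_n^{-1}\!\circ\G_k^{-1})'|\le\tfrac14$ together with $\sum_{n,k}|(\G_n^{-1}\!\circ\G_k^{-1})'|\le(\pi^2/6)^2$ (both from Lemma~\ref{01}) gives $\Theta_i(0)\le 4^{-i}(\pi^2/6)^2<1$ for $i=1,2$. Since the three remaining sups are finite by Lemma~\ref{01} and the weights depend continuously on $\eps$, the map $\eps\mapsto\Theta_i(\eps)$ is continuous, so after shrinking $V$ around $0$ there is $\theta\in(0,1)$ with $\sup_{\eps\in V}\Theta_i(\eps)\le\theta$ for $i=1,2$; the auxiliary bound $\|\L_\eps f\|_{C^0}\le(\pi^2/6)^2\|f\|_{C^0}$ is immediate. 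Iterating then gives $\|\L_\eps^n f\|_{C^i}\le\theta^n\|f\|_{C^i}+C'\|f\|_{C^{i-1}}$ with $\eps$-independent constants, and since $C^i\hookrightarrow C^{i-1}$ is compact on a compact interval, Hennion's theorem gives uniform quasi-compactness with essential spectral radius $\le\theta$ on both $C^1$ and $C^2$.

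To pass from quasi-compactness to a spectral gap I would use that the random Gauss--R\'enyi system is random covering \cite{KKV}: a nonnegative $C^1$ fixed point of $\L_\eps$ is positive on some subinterval and covering propagates positivity to all of $(0,1)$ (exactly as in the proof of Lemma~\ref{LYonLip}), so the stationary density is unique, lies in $C^2$, and is the only peripheral eigenfunction; hence $\L_\eps$ has a spectral gap for each $\eps\in V$. For uniformity I would invoke the Keller--Liverani perturbation theorem: writing $\L_\eps-\L_0$ as an explicit finite sum with $O(\eps)$ operator coefficients gives $\|\L_\eps-\L_0\|_{\L(C^i,C^{i-1})}\le C\eps$, and combined with the uniform Lasota--Yorke inequality this yields spectral stability near $\eps=0$; since at $\eps=0$ the only spectrum outside $\{|\lambda|\le\theta\}$ is the simple eigenvalue $1$, the same holds, with a uniform gap, throughout a sufficiently small $V$.

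I expect the real obstacle to be the Lasota--Yorke step, specifically verifying that $\Theta_i(\eps)$ is \emph{strictly} below $1$ rather than merely $\le 1$. This rests entirely on the Gauss--Gauss contraction estimate $|(\G_n^{-1}\!\circ\G_k^{-1})'|\le\tfrac14$ of Lemma~\ref{01}, which fails for the $(n,k)=(1,1)$ branches of the R\'enyi-involving compositions --- these being kept under control only through the smallness of the prefactors $\eps(1-\eps)$ and $\eps^2$, whence the need to take $V$ small. Everything else --- the term-by-term differentiation, the iteration of the inequality, the covering/positivity argument, and the perturbative uniformity --- is routine.
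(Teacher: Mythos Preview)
Your proof follows essentially the same route as the paper's: a Lasota--Yorke inequality for the second-iterate operator, with the Gauss--Gauss contraction $|(\G_n^{-1}\circ\G_k^{-1})'|\le\tfrac14$ yielding a leading coefficient strictly below $1$ for small $\eps$, followed by the covering/positivity argument (as in Lemma~\ref{LYonLip}) to exclude other peripheral spectrum. The only differences are cosmetic: the paper computes the leading coefficient explicitly as $\tfrac{\pi^4(1+17\eps)}{576}$ rather than arguing by continuity from $\eps=0$, and it leaves the uniformity of the gap implicit in the uniform Lasota--Yorke constants rather than invoking Keller--Liverani.
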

\begin{proof}
The proof is  straightforward computation and uses the fact that in our situation we can differentiate term by term in the series. For any  \(\Phi, \Psi\in\{\G_n, \R_k\}\) by definition and  \eqref{gnk'} we have 
\begin{equation}\label{LFiPsif}
{\|L_{\Phi\circ \Psi}f\|}_{C^0}\le {\|f\|}_{C^0}\sum_{n, k=1}^\infty{\|g_{n, k}'\|}_{C^0}\le {\|f\|}_{C^0} \sum_{n, k=1}^\infty\frac{1}{n^2k^2}\le \frac{\pi^4}{36}{\|f\|}_{C^0}.
\end{equation}
Below we will use the fact that derivatives of \(g_{n, k}\) do not change their sign. Thus we ignore the absolute values while taking derivatives from the expressions of form \(|g_{n, k}'|^i\). 
\begin{equation}\label{LFiPsif'}
\begin{aligned}
&{\|(L_{\Phi\circ \Psi}f)'\|}_{C^0}\le {\|f'\|}_{C^0}\sum_{n, k=1}^\infty{\|(g_{n, k}')^2\|}_{C^0}+{\|f\|}_{C^0}\sum_{n, k=1}^\infty{\|g_{n, k}''\|}_{C^0}\\
&\le   {\|g_{n, k}'\|}_{C^0}{\|f'\|}_{C^0}\sum_{n, k=1}^\infty\frac{1}{n^2k^2}+2{\|f\|}_{C^0}\sum_{n, k=1}^\infty\frac{1}{n^2k^3}\le 
 {\|g_{n, k}'\|}_{C^0}\frac{\pi^4}{36} {\|f'\|}_{C^0}+\frac{\pi^2}{2}{\|f\|}_{C^0},
\end{aligned}
\end{equation}
where we used Lemma \ref{01} and the fact \(\zeta(3)=\sum_k k^{-3}\le 3/2\) in the last two chain of inequalities.
Finally,  again by triangle inequality and Lemma \ref{01} we have 
\begin{equation*}
\begin{aligned}
&{\|(L_{\Phi\circ \Psi}f)''\|}_{C^0}\le {\|f''\|}_{C^0}\sum_{n, k=1}^\infty{\|g_{n, k}'\|}_{C^0}^3+3{\|f'\|}_{C^0}\sum_{n, k=1}^\infty{\|g_{n, k}'g_{n, k}''\|}_{C^0}\\
&+{\|f\|}_{C^0}\sum_{n, k=1}^\infty{\|g_{n, k}'''\|}_{C^0} \le \frac{\pi^4}{36} {\|g_{n, k}'\|}_{C^0}^2 {\|f''\|}_{C^0} +6\zeta(4)\zeta(5){\|f'\|}_{C^0} +6\zeta(2)\zeta(4){\|f\|}_{C^0},
\end{aligned}
\end{equation*}
which implies
\begin{equation} \label{LFiPsif''}
{\|(L_{\Phi\circ \Psi}f)''\|}_{C^0}\le \frac{\pi^4}{36} {\|g_{n, k}'\|}_{C^0}^2 {\|f''\|}_{C^0} +\frac{11\pi^2}{150}{\|f'\|}_{C^0} +\frac{\pi^6}{90}{\|f\|}_{C^0},
\end{equation}

We now prove a uniform Lasota-Yorke type inequality for \(L_{\Phi\circ\Psi}\) acting on \(C^2\) for all possible choices \(\Phi, \Psi\in\{\G_n, \R_k\}\). 

First consider the case \(g_{k, n}=\G^{-1}_n\circ \G^{-1}_k\). Notice that \(g_{n, k}'\le {1}/{4}\). The equations \eqref{LFiPsif} and \eqref{LFiPsif'} immediately  imply that 
\begin{equation*}
{\|L_{\G\circ\G}f\|}_{C^2}\le \frac{1}{16}\frac{\pi^4}{36}{\|f''\|}_{C^0} + \left(\frac{11\pi^2}{150}+\frac{\pi^4}{36} \right){\|f'\|}_{C^0} +\left( \frac{\pi^6}{90} +\frac{\pi^2}{2} +\frac{\pi^4}{36}\right){\|f\|}_{C^0}
\end{equation*}
This immediately implies that 
\begin{equation}\label{LGG}
{\|L_{\G\circ\G}f\|}_{C^2}\le \frac{1}{16}\frac{\pi^4}{36}{\|f''\|}_{C^0} + \frac{158}{45}{\|f'\|}_{C^0} +\frac{170}{9}{\|f\|}_{C^0}.
\end{equation}
 Also, in the light of Remark \ref{Rem:1} we have 
 \begin{equation}\label{LRG}
{\|L_{\G\circ\R}f\|}_{C^2}\le \frac{1}{16}\frac{\pi^4}{36}{\|f''\|}_{C^0} + \frac{158}{45}{\|f'\|}_{C^0} +\frac{170}{9}{\|f\|}_{C^0}.
\end{equation}

 When \(g_{k, n}=\R^{-1}_n\circ \R^{-1}_k\) we only have  \(g_{n, k}'\le 1\). Thus  the equations \eqref{LFiPsif} and \eqref{LFiPsif'}   imply that 
\begin{equation}\label{LRR}
{\|L_{\R\circ\R}f\|}_{C^2}\le\frac{\pi^4}{36}{\|f''\|}_{C^0} + \frac{158}{45}{\|f'\|}_{C^0} +\frac{170}{9}{\|f\|}_{C^0}.
\end{equation}
Again, by Remark \ref{Rem:1}  we have 

\begin{equation}\label{LGR}
{\|L_{\R\circ\R}f\|}_{C^2}\le\frac{\pi^4}{36}{\|f''\|}_{C^0} + \frac{158}{45}{\|f'\|}_{C^0} +\frac{170}{9}{\|f\|}_{C^0}.
\end{equation}

Finally for \(L_{\PP_\eps}\)  we have 
\begin{equation}\label{LPPeps}
\begin{aligned}
{\|L_{\PP_\eps}f''\|}_{C^0} \le & (1-\eps)^2{\|L_{\G\circ \G}f''\|}_{C^0}
 +\eps(1-\eps){\|L_{\G\circ \R}f''\|}_{C^0}
\\& +\eps(1-\eps){\|L_{\R\circ \G}f''\|}_{C^0}
 +\eps^2{\|L_{\R\circ \R}f''\|}_{C^0}.
\end{aligned}
\end{equation}
Substituting the equations \eqref{LGG}-\eqref{LRR} into \eqref{LPPeps} implies that 
\[
{\|L_{\PP_\eps}f''\|}_{C^0} \le \frac{\pi^4(1+17\eps)}{576}{\|f''\|}_{C^0} +\frac{158}{45}{\|f'\|}_{C^0} +\frac{170}{9}{\|f\|}_{C^0}.
\]
Now we choose \(\eps>0\) small enough and \(M>0\)  so that \(\kappa:=\dfrac{\pi^4(1+17\eps)}{576}<1\)  and 
\[
{\|L_{\PP_\eps}f''\|}_{C^0} \le \kappa{\|f''\|}_{C^0} +M{\|f\|}_{C^1}.
\]
Thus \(L_{\PP_\eps}\) is quasi compact on \(C^2\) and the essential spectral radius is at most  \(\kappa\). The proof of spectral gap on $C^2$ is analogous to that in the proof of Lemma \ref{LYonLip}. In a similar manner, using \eqref{LFiPsif} and \eqref{LFiPsif'}, one can obtain a uniform Lasota-Yorke inequality and a uniform spectral gap on $C^1$. 
\end{proof}


\begin{thebibliography}{99}
\bibitem{BBD} W. Bahsoun, C. Bose, Y, Duan, Decay of correlation for random intermittent maps. \emph{Nonlinearity}. Vol.  27 (2014), no. 7, 1543--1554.
\bibitem{BBR} W. Bahsoun, C. Bose, M. Ruziboev, Quenched decay of correlations for slowly mixing systems. Available at  https://arxiv.org/abs/1706.04158
\bibitem{BGNN} W. Bahsoun, S. Galatolo, I. Nisoli, X. Niu, A rigorous computational approach to linear response. Available at http://arxiv.org/abs/1506.08661
\bibitem{BS} W. Bahsoun, B. Saussol, Linear response in the intermittent family: differentiation in a weighted $C^0$-norm. \emph{Discrete Contin. Dyn. Syst.} 36 (12) (2016) pp. 6657--6668.
\bibitem{Ba1} V. Baladi, On the susceptibility function of piecewise expanding interval maps. \textit{Comm. Math. Phys.}, (2007) 839--859.
\bibitem{Ba2} V. Baladi, Linear response, or else.\\ Available at http://arxiv.org/pdf/1408.2937v1.pdf
\bibitem{BBS} V. Baladi, M. Benedicks, D. Schnellmann, Whitney-H\"older continuity of the SRB measure for transversal families of smooth unimodal maps. \textit{Invent. Math.} 201 (2015), no. 3, 773--844.
\bibitem{BKL} V. Baladi, T. Kuna, V. Lucarini, Linear and fractional response for the SRB measure of smooth hyperbolic attractors and discontinuous observables. \textit{Nonlinearity} 30 (2017), no. 3, 1204--1220. 
\bibitem{BaS0} V. Baladi, D. Smania, Linear response formula for piecewise expanding unimodal maps. \textit{Nonlinearity} 21 (2008), no. 4, 677--711. \\
V. Baladi, D. Smania, Linear response formula for piecewise expanding unimodal maps. \textit{Nonlinearity} 25 (2012), no. 7, 2203--2205  (Corrigendum). 
\bibitem{BaS} V. Baladi, D. Smania,  Linear response for smooth deformations of generic nonuniformly hyperbolic unimodal maps. \textit{Ann. Sci. \'Ec. Norm. Sup\'er.} (4) 45 (2012), no. 6, 861--926.
\bibitem{BT} V. Baladi, M. Todd, Linear response for intermittent maps. \textit{Comm. Math. Phys.} 347 (3) (2016), pp. 857--874.
\bibitem{BL} O. Butterley, C. Liverani, Smooth Anosov flows: correlation spectra and stability. \textit{J. Mod. Dyn.} (2007), 301--322.
\bibitem{DO} K. Dajani, M. Oomen, Random N-continued fraction expansions. \textit{J. Approx. Theory} 227 (2018), 1--26.
\bibitem{DdeV} K. Dajani, M. de Vries, Invariant densities for random $\beta$-expansions. \textit{J. Eur. Math. Soc.} 9 (2007), no. 1, 157--176.
\bibitem{DT} M. Demers, M. Todd, Slow and fast escape for open intermittent maps. \textit{Comm. Math. Phys.} 351 (2017), no. 2, 775--835.
\bibitem{D} D. Dolgopyat, On differentiability of SRB states for partially hyperbolic systems. \textit{Invent. Math.} (2004), 389--449.
\bibitem{FFTV}  A.-C., Freitas, J. Freitas, M. Todd, S. Vaienti, Rare events for the Manneville-Pomeau map. \textit{Stochastic Process. Appl.} 126 (2016), no. 11, 3463--3479.
\bibitem{GP} S. Galatolo, M. Pollicott, Controlling the statistical properties of expanding maps. \textit{Nonlinearity} 30 (2017), no. 7, 2737--2751.
\bibitem{G} S. Gou\"ezel, Central limit theorem and stable laws for intermittent maps. \textit{Probab. Theory Related Fields} 128 (2004), no. 1, 82--122.
\bibitem{GL} S. Gou\"ezel, C. Liverani, Banach spaces adapted to Anosov systems, \textit{Ergodic Theory Dynam. Systems} 26 (2006), 189--217.
\bibitem{Hu} H. Hu. Decay of correlations for piecewise smooth maps with indifferent fixed points. \textit{Ergodic Theory Dynam. Systems }24 (2004) 495--524.
\bibitem{Ja} M. J\'ez\'equel, Parameter regularity of dynamical determinants of expanding maps of the circle and an application to linear response. Available at https://arxiv.org/abs/1708.01055
\bibitem{KKV} C. Kalle, T. Kempton, E. Verbitskiy, The random continued fraction transformation. \textit{Nonlinearity} 30 (2017), no. 3, 1182--1203.
\bibitem{KP} A. Katok, G. Knieper, M. Pollicott, H. Weiss, Differentiability and analyticity of topological entropy for Anosov and geodesic flows. \textit{Invent. Math.} 98 (1989), no. 3, 581--597. 
\bibitem{K} A. Korepanov, Linear response for intermittent maps with summable and nonsummable decay of correlations. \textit{Nonlinearity} 29 (6) (2016) 1735--1754.
\bibitem{LS} J. Lepp\"anen, M. Stenlund, Quasistatic dynamics with intermittency. \textit{Math. Phys. Anal. Geom.} 19 (2016), no. 2, Art. 8, 23 pp.
\bibitem{LSV} C. Liverani, B. Saussol, S. Vaienti, A probabilistic approach to intermittency, \emph{Ergodic theory Dynam. Systems}, \textbf{19}, (1999), 671-685.
\bibitem{Me} I. Melbourne, Large and moderate deviations for slowly mixing dynamical systems. \textit{Proc. Amer. Math. Soc.} 137 (2009), no. 5, 1735--1741.
\bibitem{MS} W. de Melo, S. van Strien, \textit{One-dimensional dynamics}. Ergebnisse der Mathematik und ihrer Grenzgebiete (3) [Results in Mathematics and Related Areas (3)], 25. Springer-Verlag, Berlin, 1993.
\bibitem{PV} M. Pollicott, P. Vytnova, Linear response and periodic points. \textit{Nonlinearity} 29 (2016), no. 10, 3047--3066.
\bibitem{PM} Y. Pomeau, P. Manneville, Intermittent transition to turbulence in dissipative dynamical systems. \textit{Comm. Math. Phys}.\  74 (1980) 189--197. 
\bibitem{R} D. Ruelle, Differentiation of SRB states, \textit{Comm. Math. Phys.} 187 (1997) 227--241.
\bibitem{SvS} W. Shen, S. van Strien, On stochastic stability of expanding circle maps with neutral fixed points. \textit{Dyn. Syst.} 28 (2013), no. 3, 423--452.
\bibitem{SS} M. Stenlund, H. Sulku, A coupling approach to random circle maps expanding on the average. \textit{Stoch. Dyn.} 14 (2014), no. 4, 1450008, 29 pp.
\bibitem{young99} L.-S. Young, Recurrence times and rates of mixing, \emph{Israel  J. Math.}~\textbf{110} (1999) 153--188.
\end{thebibliography}
\end{document}